\definecolor{dblue}{rgb}{0.,0.,0.8}
\numberwithin{equation}{section}
\theoremstyle{plain}
\newtheorem{theorem}{Theorem}[section]
\newtheorem{lemma}[theorem]{Lemma}
\newtheorem{corollary}[theorem]{Corollary}
\theoremstyle{remark}
\newtheorem{remark}{Remark}[section]
\newtheorem{example}[remark]{Example}
\newcommand{\calI}{\mathcal{I}}
\newcommand{\calP}{\mathcal{P}}
\newcommand{\calR}{\mathcal{R}}
\newcommand{\calT}{\mathcal{T}}
\newcommand{\ver}{{\bm{a}}}
\newcommand{\dd}{\mathrm{d}}
\renewcommand{\dim}{d}
\newcommand{\abs}[1]{\lvert#1\rvert}
\newcommand{\tends}{\rightarrow}
\newcommand{\norm}[1]{\lVert#1\rVert}
\newcommand{\p}{\partial}
\renewcommand{\th}{{h\tau}}
\DeclareMathOperator{\Div}{div}
\DeclareMathOperator{\diam}{diam}
\DeclareMathOperator*{\argmin}{argmin}
\DeclareMathOperator*{\esssup}{ess\,sup}
\newcommand{\pair}[2]{\langle #1,#2 \rangle}
\newcommand{\R}{\mathbb{R}}
\newcommand{\Om}{\Omega}
\newcommand{\om}{\omega}
\newcommand{\DO}{\partial\Om}
\newcommand{\vphi}{\varphi}
\newcommand{\VK}{\mathcal{V}_K}
\newcommand{\RTNK}{\bm{RTN}_{\widetilde{p}}(K)}
\newcommand{\RTNa}{\bm{RTN}_{\widetilde{p}}(\Ta)}
\newcommand{\sh}{\bm{\sigma}_h}
\newcommand{\Ver}{\mathcal{V}}
\newcommand{\Ta}{\calT^{\ver}}
\newcommand{\oma}{{\om_{\ver}}}
\newcommand{\psia}{\psi_{\ver}}
\newcommand{\Qa}{Q_h^{\ver}}
\newcommand{\Va}{\bm{V}_h^\ver}
\newcommand{\stha}{\bm{\sigma}_{\th}^{\ver,n}}
\newcommand{\sth}{\bm{\sigma}_{\th}}
\newcommand{\etaOscY}{\eta_{\mathrm{osc},\tau,Y}}
\newcommand{\etaOscX}{\eta_{\mathrm{osc},\tau,X}}
\newcommand{\etaOscE}{\eta_{\mathrm{osc},\tau,E}}
\newcommand{\etaOscEth}{\eta_{\mathrm{osc},h,\tau,E}}
\newcommand{\wetaOscEth}{\widetilde{\eta}_{\mathrm{osc},h,\tau,E}}
\newcommand{\YT}{Y_T}
\newcommand{\LH}{L^2(0,T;H^1_0(\Om))}
\newcommand{\HHm}{H^1(0,T;H^{-1}(\Om))}
\newcommand{\Yo}{Y_0}
\newcommand{\Bx}{B_X}
\newcommand{\By}{B_Y}
\newcommand{\Bz}{B_Z}
\newcommand{\tXt}{\mathbb{V}_\tau}
\newcommand{\normYT}[1]{\norm{#1}_{Y_T}}
\newcommand{\normYs}[1]{\norm{#1}_{Y,\mathrm{sym}}}
\newcommand{\uha}{\overline{u}_\tau}
\newcommand{\T}{\mathcal{T}}
\newcommand{\VTp}{V_{h}}
\newcommand{\uht}{u_{h,\tau}}
\newcommand{\Uht}{U_{h,\tau}}
\newcommand{\ouht}{\overline{u}_{h,\tau}}
\newcommand{\Vht}{\mathbb{V}_{h,\tau}}
\newcommand{\Vhtp}{\Vht^+}
\newcommand{\sht}{\bm{\sigma}_{h,\tau}}
\newcommand{\etaOscEthGlobal}{\eta_{\mathrm{osc},h,\tau,E,\star}}
\title{An introduction to the \textit{a posteriori} error analysis of parabolic partial differential equations}
\author{Iain~Smears\footnotemark[2]}
\date{}
\begin{document}

\renewcommand{\thefootnote}{\fnsymbol{footnote}}
\footnotetext[2]{Department of Mathematics, University College London, Gower Street, London, WC1E 6BT, United Kingdom.}
\renewcommand{\thefootnote}{\arabic{footnote}}

\maketitle

\begin{abstract}
This article provides a brief introduction to the \emph{a posteriori} error analysis of parabolic partial differential equations, with an emphasis on challenges distinct from those of steady-state problems.
Using the heat equation as a model problem, we examine the crucial influence of the choice of error norm, as well as the choice of notion of reconstruction of the discrete solution, on the analytical properties of the resulting estimators, especially in terms of the efficiency of the estimators.
\end{abstract}

{\noindent\bfseries Key words: }Parabolic partial differential equations, \emph{a posteriori} error analysis \smallskip

\tableofcontents

\section{Introduction}\label{sec:intro}

The numerical approximation of partial differential equations (PDE) is crucial in many scientific and engineering fields. 
Ensuring the accuracy of the approximate solutions necessitates robust error control by \emph{a posteriori} error estimators, which provide computable bounds or estimates for the discretization error based on the computed solution and problem data, without requiring knowledge of the exact solution. 
The error estimators are also central ingredients in the design of adaptive algorithms, which aim to achieve gains in computational efficiency and prescribed error tolerances. 
The purpose of this expository article is to present a brief, accessible, and relatively self-contained, introduction to this topic. 
With this purpose in mind, we use as a model problem the heat equation

\begin{equation}\label{eq:parabolic}
\begin{aligned}
\p_t u - \Delta u &= f & & \text{in }\Om\times(0,T),\\
 u &= 0 & &\text{on }\DO\times (0,T),\\
 u(0) &= u_0  & &\text{in }\Om,
\end{aligned}
\end{equation}
where $\Om$ is a bounded open set in $\R^d$, $d\geq 1$, which we assume to have a Lipschitz boundary, and where $T>0$ is the final time. The assumptions on the data will be made precise below.
We do not assume that the reader is familiar with the current research literature.
Instead, we only assume that the reader has some foundations in the analysis of parabolic PDE and Bochner spaces, as can be found in many books, such as~\cite{Evans1998,LionsMagenes1972,Wloka1987}.

The analysis of \emph{a posteriori} error estimators for time-dependent PDE poses a number of challenges that are not typically encountered for their steady-state counterparts. 
There is not only a very wide range of possible choices of numerical methods, both with regards to temporal and spatial discretization, but even for a chosen method, there can be significant additional complications, such as mesh-adaptation in between time-steps~\cite{ChenFeng2004,Dupont1982,ErikssonJohnson1991,GaspozSiebertKreuzerZiegler2019,Kreuzer2012}.  
In addition, there are many approaches and techniques to the derivation and construction of the \emph{a posteriori} error estimators. 
To mention only a few, there are techniques based on dual problems~\cite{ErikssonJohnson1991,JohnsonNieThomee1990}, elliptic reconstructions~\cite{LakkisMakridakis2006,LakkisMakPryer2015}, energy type bounds~\cite{BergamBernardiMghazli2005,Picasso1998}, or inf-sup stability of the problem~\cite{ESV2017,ESV2019,GaspozSiebertKreuzerZiegler2019,Verfurth2003}.
However, the purpose here is not to review the breadth of possibilities in the literature. 
Instead, the focus is on some fundamental aspects that are relevant to essentially all numerical computations and estimation techniques for parabolic problems. 

The first focal point that we consider is the effect of the choice of norm in which to measure the error.
For parabolic problems, there is a plethora of norms in which one can measure the error.
We are particularly interested in those functional settings that are connected to an inf-sup stable formulation of the problem, so that there is an equivalence between the norm of the error and a (usually noncomputable) dual norm of the residual. 
In these cases, the question of efficiency of the estimators seems particularly relevant, yet turns out to be a rather challenging one.
The problem, as we detail below, is that the same estimator (more precisely, the temporal jump estimator) appears in the corresponding upper bounds on the error for various norms of the error, yet it is known from counter-examples (c.f.\ Example~\ref{ex:inefficiency_jump_X_norm} below) that it is not an efficient estimator for all of these norms. 
Furthermore, for some norms where some efficiency results are available, the question of locality of the efficiency bounds is also more complicated than for steady-state problems.
The second issue that we wish to focus on is perhaps more subtle and easy to overlook. For many time-stepping schemes, such as the implicit Euler method, there is an element of choice in how to reconstruct a discrete function that extends the computed values at the time-step points to the whole time interval.
Hence there is some flexibility in the precise notion of numerical solution that is to be compared against the exact solution.
For instance, two popular choices in the case of the implicit Euler method are a piecewise constant-in-time reconstruction, and a continuous piecewise affine-in-time reconstruction.
Yet there is generally no unique answer, for all problem data, as to which provides the smaller error (see Example~\ref{ex:comparison} below).
Furthermore, the efficiency of the \emph{same} estimator can depend crucially on the choice of reconstruction of the numerical solution (c.f.\ Example~\ref{ex:inefficiency_jump_X_norm} below showing how, depending on the problem data, the estimator might vastly overestimate the error for either one of the reconstructions).

Therefore, we hope that this expository work will help our reader to better navigate the landscape regarding the relationship between the error estimators and the various possible choices of norms and also choices of notion of numerical solution.
We also aim to further clarify the issue of efficiency of the estimators, as this is an aspect of the analysis that has been a challenge for many works in the literature.
In order to highlight how many of the central issues result from time-discretization, we first consider the semi-discrete setting where the problem is discretized only with respect to time. Then, to show how the features of the analysis extend to the fully discrete setting, we also consider the fully discrete setting where space is discretized by a conforming finite element method.
In order to make the presentation as brief, accessible and self-contained as possible, it has been necessary to make some selections on choices of topics.
One direction that we do not discuss is mesh adaptation between time-steps, which requires a more technical analysis.
We refer the reader instead to the research literature mentioned above for further details on this matter.
We also do not cover here several other possible norms for the error, such as $L^2(L^2)$, $L^\infty(L^2)$ or $L^\infty(L^\infty)$ norms, and instead refer the reader to \cite{DemlowLakkisMak2009,ErikssonJohnson1987a,ErikssonJohnson1995,JohnsonNieThomee1990,LakkisMakPryer2015,Sutton2020,Verfurth1998b}.
We also do not go into details on other choices of numerical methods, such as other time-stepping methods, especially higher-order methods~\cite{AkrivisMakNoch2006,AkrivisMakridakisNochetto2009,ESV2017,ESV2019,LozinskiPicasso2009,MakridakisNochetto2006,Schotzau2010}, or nonconformity of the spatial discretization~\cite{ErnVohralik2010,GeorgoulisLakkis2011,NicaiseSoualem2005}.
Despite its importance, the choice of estimators to handle the spatial discretization is not the central focus here, so in the fully discrete setting, we consider only the equilibrated flux estimators, since it allows for some essential results to be quoted succinctly from the literature, which will be especially useful for the treatment of the $L^2(H^1)$ and energy norms. 
However, other spatial estimators, such as residual-based estimators, could also be considered.
Finally, we also do not consider extensions to more general parabolic problems, for instance problems with lower-order terms, although we note that the setting of the analysis generalizes via inf-sup stability for some equivalent norms with time-dependent weights (c.f.\ \cite{OsborneSmears2025}).

%
%
%
%
%
%
%
%
%
%
%

\section{Setting and notation}

\subsection{Sobolev and Bochner spaces}

For a nonempty, bounded, open set $\omega \subset \R^\dim$, $\dim\geq 1$, and an integer $m\geq 1$, let $L^p(\omega;\mathbb{R}^m)$ denote the space of Lebesgue measurable vector-fields $v\colon \omega \rightarrow [-\infty,\infty]^m$ such that $\norm{v}_{L^p(\omega,\R^m)}\coloneqq\left(\int_\omega \abs{v}^p\dd x\right)^{\frac{1}{p}}<\infty$. For the case $m=1$, we abbreviate $L^p(\omega)\coloneqq L^p(\omega;\R^m)$, and likewise for its norm.
For the case $p=2$ and general integer $m\geq 1$, we shall use the shorthand notation $\lVert \cdot \rVert_\omega$ and $(\cdot,\cdot)_{\omega}$ to denote respectively the standard norm and inner-product for both scalar- and vector-valued functions on~$\omega$, where the arguments of the norms and inner-product will distinguish between the scalar and vectorial cases.
Let the Hilbert space $H^1(\omega)\coloneqq W^{1,2}(\omega)$ denote the usual Sobolev space of functions in $L^2(\omega)$ with first-order weak partial derivatives also in $L^2(\omega)$.
The space $H^1(\omega)$ is equipped with the norm
\begin{equation}
\norm{v}_{H^1(\omega)}^2 \coloneqq \norm{v}_\omega^2 + \norm{\nabla v}_\omega^2 \quad \forall v \in H^1(\omega),
\end{equation}
where $\nabla v $ denotes the gradient of $v$.
Let $H^1_0(\omega)$ denote the closure of $C^\infty_0(\omega)$ in the space $H^1(\omega)$, where $C^\infty_0(\omega)$ denotes the space of real-valued infinitely differentiable compactly supported functions on $\omega$.
Note that for $\omega$ bounded, the Poincar\'e inequality implies that the mapping $v\mapsto \norm{\nabla v}_{\omega}$ defines an equivalent norm on $H^1_0(\omega)$, see \cite[Corollary 6.31, p.~184]{AdamsFournier2003}. 
Let $H^{-1}(\omega)$ denote the dual space of $H^1_0(\omega)$, with norm
\begin{equation}\label{eq:dual_norm}
\norm{\Phi}_{H^{-1}(\omega)}\coloneqq \sup_{v\in H^1_0(\omega)\setminus\{0\}}\frac{\pair{\Phi}{v}_{H^{-1}(\omega)\times H^1_0(\omega)}}{\norm{\nabla v}_\omega} \quad \forall \Phi\in H^{-1}(\omega),
\end{equation}
where $\pair{\cdot}{\cdot}_{H^{-1}(\omega)\times H^1_0(\omega)}$ is the duality pairing between $H^{-1}(\omega)$ and $H^1_0(\omega)$.
To simplify the notation, for the special case where $\omega=\Omega$ the domain in~\eqref{eq:parabolic}, we shall drop the subscript and simply write $\pair{\cdot}{\cdot}$ to denote the duality pairing between $H^{-1}(\Omega)$ and $H^1_0(\Omega)$.

For a bounded open set $\omega\subset \R^\dim$, the space $L^2(\omega)$ can be canonically embedded into $H^{-1}(\omega)$ through $\pair{w}{v}_{H^{-1}(\omega)\times H^1_0(\omega)} = (w,v)_{\omega}$ for all $v\in H^1_0(\omega)$. The density of $H^1_0(\omega)$ in $L^2(\omega)$ implies the injectivity of the embedding $L^2(\omega)$ into $H^{-1}(\omega)$; i.e.\ $\pair{w}{v}_{H^{-1}(\omega)\times H^1_0(\omega)}=0$ for all $v\in H^1_0(\omega)$ if and only if $w =0$. 
Thus the spaces $H^1_0(\omega)$, $L^2(\omega)$ and $H^{-1}(\omega)$ form a \emph{Gelfand triple}, with $H^1_0(\omega)\subset L^2(\omega)\subset H^{-1}(\omega)$, where each embedding is continuous, compact, dense and injective, see also~\cite[p.~262]{Wloka1987}.

In the following, we frequently consider Bochner spaces consisting of functions that map a bounded open interval~$I\subset \R$ into various Sobolev spaces over some spatial domain~$\omega$. 
Although we state here some of the main definitions of these spaces and mention some basic properties, the reader may find a more comprehensive introduction in \cite[Ch.~5]{Evans1998}, \cite[Ch.4]{Wloka1987}, and in~\cite{Yosida1995}. 
Let~$V$ be a real Banach space. The space $L^p(I;V)$ is the space of all strongly measurable functions $v:I\mapsto V$ such that $\norm{v}_{L^p(0,T;V)}<\infty$, where 
\begin{equation}
\norm{v}_{L^p(I;V)}\coloneqq \begin{cases}
\left(\int_I \norm{v(t)}_{V}^p\mathrm{d}t \right)^\frac{1}{p}, &p\in [1,\infty),
\\ \esssup_{t\in I}\norm{v(t)}_V, &p = \infty.
\end{cases}
\end{equation}
The main examples of Bochner spaces that we will use below are the spaces $L^2(I;H^1_0(\omega))$, $L^2(I;L^2(\omega))$ and $L^2(I;H^{-1}(\omega))$.
We note that, if $1\leq p<\infty$, the spaces $L^p(I\times \omega)$ and $L^p(I;L^p(\omega))$ are isometrically isomorphic to each other; however this does not extend to the case $p=\infty$, see \cite[p.~24]{Roubicek2013}.
The notion of weak derivatives extends to Bochner spaces, see~\cite[p.~285]{Evans1998}: in particular, for $v\in L^p(I;V)$, we say that $v$ has weak time derivative $ w \in L^1(I;V)$, and we write $\p_t v \coloneqq w$, if and only if 
\begin{equation}\label{eq:bochner_weak_derivative}
\int_I  v(t) \p_t \phi(t) \mathrm{d}t = - \int_I  w(t) \phi(t) \mathrm{d}t \quad \forall \phi \in C^\infty_0(I),
\end{equation}
where the integrals in~\eqref{eq:bochner_weak_derivative} are $V$-valued Bochner integrals, see~\cite[p.~650]{Evans1998} and \cite[p.~388]{Wloka1987} for a definition. 
Using the fact that $\pair{\Phi}{\int_I v(t)\mathrm{d}t}_{V^*\times V}=\int_I\pair{\Phi}{v(t)}_{V^*\times V}\mathrm{d}t$ for all $v\in L^1(I;V)$ and $\Phi\in V^*$, c.f.\ \cite[p.~650]{Evans1998}, and the Hahn--Banach Theorem, it can be shown that~\eqref{eq:bochner_weak_derivative} is equivalent to
\begin{equation}\label{eq:bochner_weak_derivative_2}
\int_I  \pair{\Phi}{v(t)}_{V^*\times V} \p_t \phi(t) \mathrm{d}t = - \int_I \pair{\Phi}{w(t)}_{V^*\times V} \phi(t) \mathrm{d}t, \quad \forall \Phi \in V^*,\quad \forall \phi \in C^\infty_0(I).
\end{equation}
The space $H^1(I;V)$ is the space of functions in $v\in L^2(I;V)$ that have weak time derivative $\p_t v \in L^2(I;V)$. 
A norm on $H^1(I;V) $ is given by $\norm{v}_{H^1(I;V)}=\left(\int_I\norm{v}_{V}^2+\norm{\p_t v}_{V}^2\mathrm{d}t\right)^{\frac{1}{2}}$.

\subsection{Function spaces for parabolic PDE}\label{sec:function_spaces_for_parabolic_pde}


We start by defining several function spaces that will be of special use in the analysis.
First, let
\begin{equation}\label{eq:XY_spaces_def_1}
\begin{aligned}
	X & \coloneqq \LH.
\end{aligned}
\end{equation}
Since $\Omega$ is bounded, we shall equip $X$ with the norm $\norm{\cdot}_X$ defined by
\begin{equation}\label{eq:X_norm}
\norm{v}_X^2 \coloneqq \int_0^T \norm{\nabla v}_\Omega^2 \mathrm{d}t \quad\forall v \in X.
\end{equation}
We also let $Z\coloneqq X\times L^2(\Omega) $ denote the product of the spaces $X$ and $L^2(\Omega)$.
A norm on the space $Z$ is defined by
\begin{equation}\label{eq:Z_norm}
\begin{aligned}
\norm{\bm{v}}_Z^2 &\coloneqq \norm{v}_X^2+\frac{1}{2}\norm{v_T}_\Omega^2 && \forall\bm{v}=(v,v_T)\in Z.
\end{aligned}
\end{equation}
Note that the choice of a factor of $\frac{1}{2}$ before the term $\norm{v_T}_\Omega^2$ in~\eqref{eq:Z_norm} is motivated by the \emph{energy} norm on the solution of the heat equation, which we will consider in later sections.
Next, let 
\begin{equation}\label{eq:Y_def}
Y \coloneqq \LH\cap \HHm,
\end{equation}
where, to be clear, the space $Y$ is the space of functions $ \varphi$ such that $\varphi \in X=\LH$ and, after taking the embedding from $H^1_0(\Omega) $ into $H^{-1}(\Omega)$  as described above, then also $\varphi \in H^1(0,T;H^{-1}(\Omega))$. 
A possible choice of norm on $Y$ is given by $\varphi \mapsto \sqrt{\norm{\varphi}^2_X+\norm{\varphi}^2_{H^1(I;H^{-1}(\Omega))}}$, although see~\eqref{eq:Y_norm} below.
It is known that $Y$ is continuously embedded in $C([0,T];L^2(\Omega))$, see~\cite[p.~287]{Evans1998}, and thus we can define a more suitable norm on $Y$ for the purpose of \emph{a posteriori} error analysis for~\eqref{eq:parabolic}, given by
\begin{equation}\label{eq:Y_norm}
\begin{aligned}
\norm{\vphi}_{Y}^2 &\coloneqq \int_0^T \left(\norm{\p_t \vphi}_{H^{-1}(\Om)}^2 + \norm{\nabla \vphi}_\Omega^2 \right) \dd t + \norm{\vphi(T)}_\Omega^2 & & \forall \vphi \in Y.
\end{aligned}
\end{equation}
It will be seen in Section~\ref{sec:infsup_Y} below that the choice of norm~\eqref{eq:Y_norm} leads to optimal constants in the analysis of the inf-sup condition for the bilinear form associated to the heat operator in~\eqref{eq:parabolic}, which is advantageous for the goal of obtaining quantitative bounds on the error.
Note that the final term in the right-hand side of~\eqref{eq:Y_norm} is finite as a result of the the continous embedding $Y\subset C([0,T];L^2(\Omega)) $, and thus $\norm{\cdot}_Y$ is well-defined and is moreover equivalent to 
$\sqrt{\norm{\cdot}^2_X+\norm{\cdot}^2_{H^1(I;H^{-1}(\Omega))}}$.

Moreover, the continuous embedding of $Y$ in $C([0,T];L^2(\Omega))$ also implies that we can define several useful subspaces of $Y$ with conditions at the initial and final times.
In particular, we define the spaces
\begin{equation}\label{eq:Yo_YT_def}
\begin{aligned}
\Yo  \coloneqq \{ \vphi \in Y,\; \vphi(0) = 0\}, &&& \YT  \coloneqq \{ \vphi \in Y,\;  \vphi(T) =0 \},
	\end{aligned}
\end{equation}
which denote respectively the subspaces of function in $Y$ that vanish at $t=0$ and $t=T$.
We leave $\Yo$ equipped with norm $\norm{\cdot}_{Y}$ defined in~\eqref{eq:Y_norm} above, but we shall equip $\YT$ with the norm $\norm{\cdot}_{\YT}$ defined by
\begin{equation}\label{eq:YT_def}
\begin{aligned}
 \norm{\vphi}_{\YT}^2 & \coloneqq \int_0^T \left( \norm{\p_t \vphi}_{H^{-1}(\Om)}^2 + \norm{\nabla \vphi}_\Omega^2 \right)\dd t + \norm{\vphi(0)}_\Omega^2 & & \forall \vphi \in \YT.
 \end{aligned}
\end{equation}
We will also consider the norm $\normYs{\cdot}$ on the space $Y$ defined by
\begin{equation}\label{eq:Ys_def}
\begin{aligned}
\normYs{\varphi} &\coloneqq \int_0^T\left(
\norm{\partial_t \varphi}_{H^{-1}(\Omega)}^2+\norm{\nabla\varphi}_\Omega^2\right)\dd t + \norm{\varphi(0)}_\Omega^2+\norm{\varphi(T)}_\Omega^2 && \forall \varphi \in Y,
\end{aligned}
\end{equation}
The notation $\normYs{\cdot}$ reflects the fact that this norm is invariant with respect to reversal of the time variable, i.e.\ the map $Y\ni\varphi\mapsto \varphi(T-\cdot)$ is an isometry under the norm $\normYs{\cdot}$. 
This invariance property will play an important role later in Section~\ref{sec:hypercircle_theorem} for the analysis of bounds for the energy norm of the error.

%
%
%
%
%
%
%
%
%
%
%

\subsection{The heat equation}

Returning to the heat equation~\eqref{eq:parabolic}, we shall suppose throughout this work that $f\in L^2(0,T;H^{-1}(\Omega))$ and that $u_0\in L^2(\Omega)$.
Then, it is well-known that \eqref{eq:parabolic} admits a unique weak solution $u\in Y$ that solves
\begin{equation}
\pair{\p_t u (t)}{v}+(\nabla u(t),\nabla v)_\Omega = \pair{f(t)}{v} \quad\forall v\in H^1_0(\Omega), \quad \text{for a.e. } t\in (0,T),
\end{equation}
and $u(0)=u_0$, see for instance~\cite{Evans1998,LionsMagenes1972,Wloka1987}.
Recall that $\pair{\cdot}{\cdot}$ denotes the duality pairing between $H^{-1}(\Om)$ and $H^1_0(\Om)$.

\section{Inf-sup stability of the heat equation}\label{sec:inf_sup_stability}

\subsection{Inf-sup stability: $L^2(H^1)\cap H^1(H^{-1})$ norm}\label{sec:infsup_Y}

We first consider the inf-sup stability of the heat equation~\eqref{eq:parabolic} when considering the solution in the space $Y=L^2(0,T;H^1(\Omega))\cap H^1(0,T;H^{-1}(\Omega))$.
We start by defining the bilinear form $\By \colon Y\times X\tends \R$ by
\begin{equation}\label{eq:bilinear_Y}
\By(\vphi,v) \coloneqq \int_0^T \left(\pair{\p_t \vphi}{v} + (\nabla \vphi,\nabla v)_\Omega \right)\dd t,
\end{equation}
where $\vphi\in Y$ and $v\in X$ are arbitrary functions.
Then, the problem~\eqref{eq:parabolic} admits the following weak formulation: find $u \in Y$ such that $u(0)=u_0$ and such that
\begin{equation}\label{eq:Y_formulation}
\begin{aligned}
\By(u,v) = \int_0^T \pair{f}{v} \dd t & && \forall v \in X.
\end{aligned}
\end{equation}
The well-posedness of~\eqref{eq:Y_formulation} is well-known and can be shown by Galerkin's method \cite{Evans1998,Wloka1987}.
The following result states an inf--sup stability result for the bilinear form~$\By$ for the above spaces equipped with their respective norms. 
\begin{theorem}[Inf--sup identity]\label{thm:inf_sup_parabolic_Y}
For every $\vphi \in Y$, we have
\begin{equation}
\begin{aligned}
\norm{\vphi}_{Y}^2 &= \left[\sup_{v \in X\setminus\{0\}} \frac{ \By(\vphi,v) }{\norm{v}_{X}}\right]^2 + \norm{\vphi(0)}_\Omega^2 . \label{eq:infsup_continuous_Y}
\end{aligned}
\end{equation}
\end{theorem}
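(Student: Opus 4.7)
The plan is to reduce the supremum to a Hilbert-space calculation by representing the dual pairing $\pair{\partial_t \vphi}{\cdot}$ through the Riesz isomorphism for $H^1_0(\Omega)$, then expand the resulting squared norm and identify the cross term via Bochner-style integration by parts in time. Throughout, the key background facts I would use from the excerpt are: the continuous embedding $Y\subset C([0,T];L^2(\Omega))$ (so that $\vphi(0),\vphi(T)$ make sense), the fact that $\norm{\nabla\cdot}_\Omega$ is an equivalent norm on $H^1_0(\Omega)$, and the definition~\eqref{eq:dual_norm} of the $H^{-1}(\Omega)$ norm.

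First, I would introduce the auxiliary lift $w\in X$ defined by $w(t)\in H^1_0(\Omega)$ with $-\Delta w(t)=\partial_t \vphi(t)$ for a.e.\ $t\in(0,T)$; equivalently, $(\nabla w(t),\nabla v)_\Omega=\pair{\partial_t \vphi(t)}{v}$ for all $v\in H^1_0(\Omega)$. This $w$ is measurable (and in $X$) because it is obtained by composing $\partial_t\vphi\in L^2(0,T;H^{-1}(\Omega))$ with the bounded linear Riesz isomorphism $H^{-1}(\Omega)\to H^1_0(\Omega)$. Crucially, by Riesz's theorem one has the pointwise identity $\norm{\nabla w(t)}_\Omega=\norm{\partial_t \vphi(t)}_{H^{-1}(\Omega)}$ for a.e.\ $t$.

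With this $w$ at hand, the bilinear form rewrites as $\By(\vphi,v)=\int_0^T (\nabla(w+\vphi),\nabla v)_\Omega\,\dd t$. The Cauchy--Schwarz inequality in $L^2(0,T;L^2(\Omega;\R^d))$ immediately gives
\begin{equation*}
\sup_{v\in X\setminus\{0\}}\frac{\By(\vphi,v)}{\norm{v}_X}\;\le\;\left(\int_0^T \norm{\nabla(w+\vphi)}_\Omega^2\,\dd t\right)^{1/2},
\end{equation*}
with equality attained by the admissible choice $v=w+\vphi\in X$, so the supremum equals this right-hand side exactly.

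Next I would square and expand, so that
\begin{equation*}
\left[\sup_{v\in X\setminus\{0\}}\frac{\By(\vphi,v)}{\norm{v}_X}\right]^2 = \int_0^T\!\left(\norm{\nabla w}_\Omega^2+2(\nabla w,\nabla \vphi)_\Omega+\norm{\nabla\vphi}_\Omega^2\right)\dd t.
\end{equation*}
The first summand equals $\int_0^T\norm{\partial_t\vphi}_{H^{-1}(\Omega)}^2\,\dd t$ by construction of $w$. For the cross term, using again the defining identity of $w$ with test function $v=\vphi(t)\in H^1_0(\Omega)$ yields $(\nabla w(t),\nabla \vphi(t))_\Omega = \pair{\partial_t\vphi(t)}{\vphi(t)}$, and then the standard integration-by-parts formula for functions in $Y$ (available since $Y\hookrightarrow C([0,T];L^2(\Omega))$, c.f.\ \cite[p.~287]{Evans1998}) gives $2\int_0^T\pair{\partial_t\vphi}{\vphi}\,\dd t=\norm{\vphi(T)}_\Omega^2-\norm{\vphi(0)}_\Omega^2$. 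Collecting terms reproduces $\norm{\vphi}_Y^2-\norm{\vphi(0)}_\Omega^2$, which is exactly the identity~\eqref{eq:infsup_continuous_Y}.

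The main obstacle is really a bookkeeping one rather than a conceptual one: making the lift $w$ rigorously measurable and confirming that the Cauchy--Schwarz upper bound is attained inside the admissible space $X$. The integration-by-parts step is the one that needs the most care, since it is what converts the non-symmetric term $\pair{\partial_t \vphi}{\vphi}$ into the boundary contributions at $t=0$ and $t=T$ and is precisely the reason the Gelfand-triple structure from Section~2 is invoked; once it is in place, the identity follows by arithmetic.
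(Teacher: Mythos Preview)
Your proof is correct and follows essentially the same approach as the paper: introduce the Riesz lift $w$ of $\partial_t\vphi$, identify $\By(\vphi,v)$ with the $X$-inner product $(w+\vphi,v)_X$, use Cauchy--Schwarz to evaluate the supremum as $\norm{w+\vphi}_X$, and then expand the square using the integration-by-parts identity $2\int_0^T\pair{\partial_t\vphi}{\vphi}\,\dd t=\norm{\vphi(T)}_\Omega^2-\norm{\vphi(0)}_\Omega^2$. The only (trivial) point you gloss over is the degenerate case $w+\vphi=0$, where your choice of maximizer falls outside $X\setminus\{0\}$; but then $\By(\vphi,\cdot)\equiv 0$ and both sides of the identity vanish, so the claim still holds.
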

\begin{proof}
For a fixed $\vphi \in Y$, let $w_*\in X$ be defined by $(\nabla w_*, \nabla v )_\Omega = \pair{\p_t \vphi}{v}$ for all $v\in H^1_0(\Om)$, a.e.\ in $(0,T)$, which implies the identity $\norm{w_*}_X^2 = \int_0^T\norm{\p_t \vphi}_{H^{-1}(\Om)}^2\dd t$.
Furthermore, we have 
\begin{equation}
\By(\vphi,v)=\int_0^T(\nabla(w_*+\vphi),\nabla v)_\Omega\,\dd t = (w_*+\vphi, v)_X,
\end{equation}
where $(\cdot,\cdot)_X$ denotes the inner-product on $X$.
Thus the Cauchy--Schwarz inequality implies that $\sup_{v\in X\setminus\{0\}} \By(\vphi,v)/\norm{v}_X = \norm{w_*+\vphi}_X$.
The desired identity~\eqref{eq:infsup_continuous_Y} is then obtained by expanding the square
\begin{equation}\label{eq:lem_Y_norm_equivalence_2}
\begin{split}
\norm{w_*+\vphi}_X^2
&= \int_0^T \norm{\nabla( w_* + \vphi )}^2_\Omega \,\dd t\\
& = \int_0^T \left(\norm{\nabla w_*}_\Omega^2 + 2(\nabla w_*,\nabla \vphi)_\Omega + \norm{\nabla \vphi}_\Omega^2 \right)\dd t \\
& = \int_0^T \left(\norm{\p_t \vphi}_{H^{-1}(\Om)}^2 + 2\pair{\p_t \vphi}{\vphi} + \norm{\nabla \vphi}_\Omega^2 \right)\dd t \\
&= \norm{\vphi}_Y^2 - \norm{\vphi(0)}_\Omega^2,
\end{split}
\end{equation}
where we note that we have used the identity $\int_{0}^T 2 \pair{\p_t \vphi}{\vphi}\,\dd t = \norm{\vphi(T)}_\Omega^2 - \norm{\vphi(0)}_\Omega^2$.
\end{proof}

Theorem~\ref{thm:inf_sup_parabolic_Y} immediately implies the following identity for the $Y$-norm of the solution of~\eqref{eq:parabolic}
\begin{equation}\label{eq:Y_norm_solution}
\norm{u}_Y^2 = \int_0^T \norm{f}_{H^{-1}(\Omega)}^2\dd t + \norm{u_0}_\Omega^2.
\end{equation}
Indeed, this follows from the fact that $X^*$, the dual space of $X=L^2(0,T;H^1_0(\Omega))$, is isometrically isomorphic to $L^2(0,T;H^{-1}(\Omega))$.

\subsection{Inf-sup stability: $L^2(H^{-1})$ norm}\label{sec:infsup_X}

The heat equation~\eqref{eq:parabolic} can also be given a variational formulation that casts the time derivative onto the test functions. In particular, let the bilinear form $\Bx \colon X\times \YT\tends \R$ be defined by
\begin{equation}
\begin{aligned}
\Bx(v,\vphi) \coloneqq \int_0^T \left[- \pair{\p_t \vphi}{v} + (\nabla v,\nabla \vphi)_\Omega \right]\dd t & && \forall v\in X,\; \vphi \in \YT.
\end{aligned}
\end{equation}
Then, the model problem~\eqref{eq:parabolic} admits the following weak formulation: find $u \in X$ such that
\begin{equation}\label{eq:X_formulation}
\begin{aligned}
\Bx(u,\vphi) = \int_0^T \langle f,\vphi\rangle\,\dd t + (u_0,\vphi(0))_\Omega & && \forall \vphi \in \YT.
\end{aligned}
\end{equation}
Note that in~\eqref{eq:X_formulation}, the initial condition $u(0)=u_0$ is expressed as a natural condition, appearing in \eqref{eq:X_formulation}, rather than as an essential condition imposed by the choice of solution space.
Note also that in general, the weak formulation~\eqref{eq:X_formulation} can be extended to more general source terms $f\in \YT^*$ the dual of $\YT$, however for the sake of simplicity we shall restrict ourselves here to the case $f\in L^2(0,T;H^{-1}(\Omega))$, and thus the solution of~\eqref{eq:X_formulation} is equivalently the solution of~\eqref{eq:Y_formulation}.

\begin{theorem}[Inf--sup identity]\label{thm:inf_sup_X}
For every $v \in X$, we have
\begin{equation}
\begin{aligned}
\norm{v}_{X} & = \sup_{\vphi \in Y_T\setminus\{0\}} \frac{ \Bx(v,\vphi) }{\norm{\vphi}_{Y_T}}.  \label{eq:infsup_X}
\end{aligned}
\end{equation}
\end{theorem}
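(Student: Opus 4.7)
The plan is to reduce the identity~\eqref{eq:infsup_X} to Theorem~\ref{thm:inf_sup_parabolic_Y} by means of a time-reversal isometry. For any function $w$ defined on $[0,T]$, write $\widetilde{w}(t) \coloneqq w(T-t)$. First I would verify, by a change of variables in the Bochner integrals together with the sign flip of $\p_t$ under this reversal, that $\varphi \mapsto \widetilde{\varphi}$ defines an isometric isomorphism from $(\YT, \normYT{\cdot})$ onto $(\Yo, \norm{\cdot}_Y)$: the condition $\widetilde{\varphi}(0) = \varphi(T) = 0$ places $\widetilde\varphi$ in $\Yo$, while the asymmetric boundary terms $\norm{\varphi(0)}_\Omega^2$ in $\normYT{\varphi}$ and $\norm{\widetilde{\varphi}(T)}_\Omega^2$ in $\norm{\widetilde\varphi}_Y$ match up. Similarly $v \mapsto \widetilde{v}$ is an isometry of $X$, and a further change of variables in the definition of $\Bx$ produces the key identity
\begin{equation*}
\Bx(v, \varphi) = \By(\widetilde{\varphi}, \widetilde{v}) \quad \forall v \in X, \; \forall \varphi \in \YT,
\end{equation*}
since the sign flip of $\p_t$ under $t \mapsto T-t$ cancels the minus sign in front of $\pair{\p_t\varphi}{v}$ in $\Bx$ to reproduce the plus sign appearing in $\By$. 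Consequently, the right-hand side of~\eqref{eq:infsup_X} equals $\sup_{\psi \in \Yo \setminus \{0\}} \By(\psi, \widetilde{v}) / \norm{\psi}_Y$.

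For the upper bound, I would then invoke Theorem~\ref{thm:inf_sup_parabolic_Y} applied to $\psi \in \Yo$: since $\psi(0) = 0$, the identity~\eqref{eq:infsup_continuous_Y} reduces to $\sup_{w \in X \setminus \{0\}} \By(\psi, w) / \norm{w}_X = \norm{\psi}_Y$. In particular $\By(\psi, \widetilde{v}) \leq \norm{\psi}_Y \norm{\widetilde{v}}_X = \norm{\psi}_Y \norm{v}_X$, so the supremum is at most $\norm{v}_X$.

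To obtain the matching lower bound, I would construct an explicit optimal test function $\psi^\star \in \Yo$ by solving a forward heat problem. Let $\psi^\star \in \Yo$ be the unique weak solution of $\p_t \psi^\star - \Delta \psi^\star = -\Delta \widetilde{v}$ in $\Omega \times (0,T)$ with homogeneous Dirichlet boundary condition and $\psi^\star(0) = 0$; that is, $\By(\psi^\star, w) = \int_0^T (\nabla \widetilde{v}, \nabla w)_\Omega \dd t$ for all $w \in X$. Existence follows from well-posedness of~\eqref{eq:Y_formulation}, since $-\Delta \widetilde{v} \in L^2(0,T;H^{-1}(\Omega))$. Applying~\eqref{eq:Y_norm_solution} to $\psi^\star$, together with the isometry $\norm{-\Delta \widetilde{v}(t)}_{H^{-1}(\Omega)} = \norm{\nabla \widetilde{v}(t)}_\Omega$ induced by the Riesz map of $H^1_0(\Omega)$, gives $\norm{\psi^\star}_Y^2 = \norm{\widetilde{v}}_X^2 = \norm{v}_X^2$. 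Choosing $w = \widetilde{v}$ in the weak formulation yields $\By(\psi^\star, \widetilde{v}) = \norm{v}_X^2$, so that $\By(\psi^\star, \widetilde{v})/\norm{\psi^\star}_Y = \norm{v}_X$. This shows the supremum is attained and equal to $\norm{v}_X$, completing the proof.

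The main obstacle is simply the careful bookkeeping of the time-reversal step: one must confirm that the asymmetric boundary terms in $\normYT{\cdot}$ and $\norm{\cdot}_Y$ correspond correctly under $\varphi \mapsto \widetilde\varphi$, and that the sign conventions combine to give precisely $\Bx(v,\varphi) = \By(\widetilde\varphi, \widetilde v)$. Once these identifications are in place, the remainder of the argument is an essentially immediate appeal to Theorem~\ref{thm:inf_sup_parabolic_Y} and to the well-posedness identity~\eqref{eq:Y_norm_solution}. An alternative, equivalent route would bypass the reversal by directly constructing a backward-in-time solution $\varphi^\star \in \YT$ of $-\p_t \varphi^\star - \Delta \varphi^\star = -\Delta v$ with $\varphi^\star(T) = 0$, but the time-reversal viewpoint is cleaner because it reuses Theorem~\ref{thm:inf_sup_parabolic_Y} verbatim.
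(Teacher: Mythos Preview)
Your proposal is correct and follows essentially the same route as the paper: time-reversal to reduce to Theorem~\ref{thm:inf_sup_parabolic_Y}, then construction of an optimal test function via an auxiliary heat problem. The only cosmetic difference is that the paper first records the adjoint identity $\normYT{\vphi}=\sup_{v}\Bx(v,\vphi)/\norm{v}_X$ and then solves a backward heat problem for $\vphi_*\in\YT$ directly, whereas you stay in the time-reversed variables and solve the forward problem for $\psi^\star\in\Yo$; as you yourself note, these are equivalent.
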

\begin{proof}
Let $R\colon v \mapsto v(T-\cdot)$ denote the time-reversal mapping for functions $v\in X$. It is clear that $R:X\rightarrow X$ is an isometry with $\norm{R v}_X=\norm{v}_X$ for all $v\in X$, while $R:\YT\rightarrow \Yo$ is an isometry with $\norm{R\vphi}_{Y}=\normYT{\vphi}$ for all $\vphi\in \YT$.
Observe also that $\By(R \vphi,v)=\Bx(R v,\vphi)$ for all $v\in X$ and all $\vphi\in \YT$.
Therefore, Theorem~\ref{thm:inf_sup_parabolic_Y} implies that
\begin{equation}\label{eq:adjoint_inf_sup}
\norm{\vphi}_{\YT}=\norm{R\vphi}_Y = \sup_{v\in X\setminus\{0\}} \frac{\By(R\vphi,v)}{\norm{v}_X}
=\sup_{v\in X\setminus\{0\}}\frac{\Bx(R v,\vphi)}{\norm{v}_X}
= \sup_{\widetilde{v}\in X\setminus\{0\}}\frac{\Bx(\widetilde{v},\vphi)}{\norm{\widetilde{v}}_X}.
\end{equation}
Note that in passing to the last identity in~\eqref{eq:adjoint_inf_sup}, we have simply substituted $\widetilde{v}=Rv$ and used the isometry identity $\norm{Rv}_X=\norm{v}_X$ above.
Next, we immediately obtain from~\eqref{eq:adjoint_inf_sup} the lower bound
\begin{equation}
\begin{aligned}
\norm{v}_X \geq \sup_{\vphi \in \YT\setminus\{0\}} \frac{\Bx(v,\vphi)}{ \norm{\vphi}_{\YT}} &&&\forall v\in X.
\end{aligned}
\end{equation}
To obtain the converse bound, let $\vphi_* \in \YT$ denote the solution of 
$$
\Bx(w,\vphi_*) = \int_0^T (\nabla w, \nabla v)_\Omega \dd t  \quad \forall w \in X.
$$
This problem can simply be seen as a backward-in-time heat equation with final time condition $\vphi_*(T)=0$. Hence, we have $\norm{v}_X^2 =  \Bx(v,\vphi_*)$ and  \eqref{eq:adjoint_inf_sup} implies that $\norm{\vphi_*}_{\YT} = \norm{v}_X$. This immediately shows the upper bound
\begin{equation}
\begin{aligned}
\norm{v}_X \leq \sup_{\vphi \in \YT\setminus\{0\}} \frac{\Bx(v,\vphi)}{\norm{\vphi}_{\YT}} &&&\forall v\in X,
\end{aligned}
\end{equation}
which completes the proof of~\eqref{eq:infsup_X}.
\end{proof}

It follows from Theorem~\ref{thm:inf_sup_X} that the solution $u$ of \eqref{eq:parabolic} satisfies
\begin{equation}
\norm{u}_X = \sup_{\vphi\in \YT\setminus\{0\}} \frac{\int_0^T \pair{f}{\vphi} \dd t + (u_0,\vphi(0))_\Omega}{\normYT{\vphi}}.
\end{equation}

\subsection{Inf-sup stability: energy norm}

A very commonly used approach to the analysis of parabolic problems such as~\eqref{eq:parabolic} is to consider testing the weak formulation of the equation with the solution.
It is well-known that in the case of~\eqref{eq:parabolic}, the solution $u$ satisfies the identity
\begin{equation}\label{eq:energy_identity}
\norm{u}_E^2\coloneqq \frac{1}{2}\norm{u(T)}_\Omega^2 + \int_0^T \norm{\nabla u}_\Omega^2\dd t = \int_0^T \pair{f}{u}\dd t + \frac{1}{2}\norm{u_0}_\Omega^2.
\end{equation}
This leads to stability results in what is often called the \emph{energy} norm $\norm{u}_E$ defined above.
The name \emph{energy norm} here is used in reference to the relation between the concept of energy of the corresponding stationary problem and the principle of testing the equation with the solution.
However, the issue with starting the analysis from~\eqref{eq:energy_identity} is that, in most cases, it is not clear how to obtain sharp bounds on the term $\int_0^T\pair{f}{u}\dd t$ in the right-hand side of~\eqref{eq:energy_identity}. 
To overcome this issue, we present here a different approach that places the energy norm of the solution within the framework of an inf-sup identity that relates the energy norm to a suitable dual norm of a bilinear form.
This leads to sharper \emph{a posteriori} error bounds for numerical approximations as shown in later sections.

To begin, we shall make use of a useful alternative formula for the norm~$\normYs{\cdot}$ that was defined in~\eqref{eq:Ys_def} above.
\begin{lemma}\label{lem:Ys_identity}
We have the identity
\begin{equation}\label{eq:Ys_norm_identity}
\normYs{\varphi}^2 = 2\norm{\varphi(T)}_\Omega^2 + \int_0^T\norm{(\p_t + \Delta) \varphi}_{H^{-1}(\Omega)}^2 \dd t \quad \forall \varphi \in Y.
\end{equation}
\end{lemma}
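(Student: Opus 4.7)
The plan is to mirror the Riesz-lift / expand-the-square technique used in Theorem~\ref{thm:inf_sup_parabolic_Y}, but this time applied pointwise in time to the operator $\p_t + \Delta$ rather than just to $\p_t$. The key observation is that for a.e.\ $t\in(0,T)$ the function $\varphi(t)$ lies in $H^1_0(\Om)$, so $\Delta\varphi(t)$ can be identified with the element of $H^{-1}(\Om)$ given by $v\mapsto -(\nabla\varphi(t),\nabla v)_\Om$, and then $(\p_t+\Delta)\varphi$ is a well-defined element of $L^2(0,T;H^{-1}(\Om))$.

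The first step is to introduce the Riesz lift $w_*(t)\in H^1_0(\Om)$ defined for a.e.\ $t\in(0,T)$ by $(\nabla w_*(t),\nabla v)_\Om = \pair{\p_t\varphi(t)}{v}$ for all $v\in H^1_0(\Om)$, so that $\norm{w_*(t)}_X$-type manipulations turn $H^{-1}$-norms into $H^1_0$-norms; in particular $\norm{\p_t\varphi(t)}_{H^{-1}(\Om)}=\norm{\nabla w_*(t)}_\Om$. Since $\varphi(t)\in H^1_0(\Om)$ as well, I can write
\begin{equation*}
\pair{(\p_t+\Delta)\varphi(t)}{v} = (\nabla(w_*(t)-\varphi(t)),\nabla v)_\Om \quad\forall v\in H^1_0(\Om),
\end{equation*}
which by the definition of the $H^{-1}(\Om)$ norm yields the pointwise identity $\norm{(\p_t+\Delta)\varphi(t)}_{H^{-1}(\Om)}^2 = \norm{\nabla(w_*(t)-\varphi(t))}_\Om^2$.

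The second step is to expand this square exactly as in~\eqref{eq:lem_Y_norm_equivalence_2}:
\begin{equation*}
\norm{\nabla(w_*-\varphi)}_\Om^2 = \norm{\p_t\varphi}_{H^{-1}(\Om)}^2 - 2\pair{\p_t\varphi}{\varphi} + \norm{\nabla\varphi}_\Om^2,
\end{equation*}
where I used the defining property of $w_*$ to rewrite the cross term. Integrating over $(0,T)$ and invoking the integration-by-parts identity $\int_0^T 2\pair{\p_t\varphi}{\varphi}\dd t = \norm{\varphi(T)}_\Om^2-\norm{\varphi(0)}_\Om^2$ (valid for all $\varphi\in Y$, and already used in the proof of Theorem~\ref{thm:inf_sup_parabolic_Y}) produces
\begin{equation*}
\int_0^T\norm{(\p_t+\Delta)\varphi}_{H^{-1}(\Om)}^2\dd t = \int_0^T\!\!\left(\norm{\p_t\varphi}_{H^{-1}(\Om)}^2+\norm{\nabla\varphi}_\Om^2\right)\dd t + \norm{\varphi(0)}_\Om^2 - \norm{\varphi(T)}_\Om^2.
\end{equation*}

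Finally, adding $2\norm{\varphi(T)}_\Om^2$ to both sides recovers $\normYs{\varphi}^2$ as defined in~\eqref{eq:Ys_def}, completing the proof. I do not anticipate a serious obstacle: the argument is essentially algebraic once the Riesz lift is introduced, and the only mildly delicate point is the justification of the pointwise (in $t$) manipulations, which is standard since $\varphi\in X$ and $\p_t\varphi\in L^2(0,T;H^{-1}(\Om))$ so that all quantities involved are defined for a.e.\ $t$ and measurable as functions of $t$.
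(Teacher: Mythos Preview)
Your proof is correct. It is essentially the same computation as the paper's, but organized differently: the paper invokes the time-reversal map $R\vphi=\vphi(T-\cdot)$ and applies Theorem~\ref{thm:inf_sup_parabolic_Y} to $R\vphi$ as a black box, observing that $\By(R\vphi,\cdot)$ corresponds to the operator $\p_t+\Delta$ on $\vphi$ after the change of variables, and then uses $\normYs{\vphi}^2=\normYT{\vphi}^2+\norm{\vphi(T)}_\Omega^2$. You instead redo the Riesz-lift and expand-the-square argument directly for $\p_t+\Delta$, without reversal. Your route is slightly more self-contained and makes the sign change in the cross term $-2\pair{\p_t\vphi}{\vphi}$ fully explicit; the paper's route is shorter by quoting Theorem~\ref{thm:inf_sup_parabolic_Y}. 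Both are equivalent at the level of the underlying algebra.
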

Note that in~\eqref{eq:Ys_norm_identity}, $\Delta \colon H^1_0(\Omega)\rightarrow H^{-1}(\Omega)$ denotes the Laplacian operator.
\begin{proof}
Let $\varphi\in Y$ be arbitrary and let $R:Y\rightarrow Y$ denote the time-reversal map $R\vphi = \vphi(T-\cdot)$. 
Theorem~\ref{thm:inf_sup_parabolic_Y} implies that
\begin{multline}\label{eq:Ys_norm_identity_1}
\normYT{\vphi}^2 = \norm{R\vphi}_Y^2 = \left[\sup_{v\in X\setminus\{0\}} \frac{\By(R\vphi,v)}{\norm{v}_X}\right]^2 + \norm{(R\vphi)(0)}_\Omega^2 
\\ = \int_0^T \norm{(\p_t +\Delta ) \vphi}_{H^{-1}(\Omega)}^2+\norm{\vphi(T)}_\Omega^2.
\end{multline}
We then obtain~\eqref{eq:Ys_norm_identity} from~\eqref{eq:Ys_norm_identity_1} and from the identity $\normYs{\vphi}^2 = \normYT{\vphi}^2+\norm{\vphi(T)}_\Omega^2$.
\end{proof}

Recall that the space $Z=X\times L^2(\Omega)$ was defined in Section~\ref{sec:function_spaces_for_parabolic_pde} above.
Define the bilinear form $\Bz:Z\times Y\tends \R$ by
\begin{equation}\label{eq:B_bilinear}
\begin{aligned}
\Bz(\bm{v},\varphi)\coloneqq (v_T,\varphi(T))_\Omega+ \int_0^T\left(-\pair{\partial_t \varphi}{v} + (\nabla \varphi, \nabla v )_\Omega \right) \mathrm{d}t  ,
\end{aligned}
\end{equation}
for all $\bm{v}=(v,v_T)\in Z$ and all $\varphi\in Y$.
The following theorem states the inf-sup identities for the bilinear form $\Bz$.
\begin{theorem}[Inf-sup identities]\label{thm:Z_infsup}
We have the identities
\begin{subequations}\label{eq:infsup}
\begin{align}
\sup_{\varphi\in Y\setminus\{0\}}\frac{\Bz(\bm{v},\varphi)}{\normYs{\varphi}} &= \norm{\bm{v}}_Z \quad\forall \bm{v}\in Z, \label{eq:Z_infsup_1} \\
 \sup_{\bm{v}\in Z\setminus\{0\}}\frac{\Bz(\bm{v},\varphi)}{\norm{\bm{v}}_Z} &= \normYs{\varphi} \quad \forall \varphi\in Y. \label{eq:Z_infsup_2}
\end{align}
\end{subequations}
\end{theorem}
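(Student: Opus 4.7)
The plan is to first rewrite $\Bz$ in a form that exposes the $H^{-1}$-duality structure, then derive both directions of both identities from a single weighted Cauchy--Schwarz estimate (using Lemma~\ref{lem:Ys_identity} to identify the upper bound with $\normYs{\varphi}$), and finally construct explicit maximizers: a backward heat equation for~\eqref{eq:Z_infsup_1} and a pointwise-in-time Riesz problem for~\eqref{eq:Z_infsup_2}.

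First, since $v(t)\in H^1_0(\Omega)$ for a.e.\ $t\in(0,T)$, the identity $(\nabla\varphi(t),\nabla v(t))_\Omega = -\pair{\Delta\varphi(t)}{v(t)}$, viewing $\Delta\colon H^1_0(\Omega)\to H^{-1}(\Omega)$, lets me rewrite
\begin{equation*}
\Bz(\bm{v},\varphi) = (v_T,\varphi(T))_\Omega - \int_0^T\pair{(\p_t+\Delta)\varphi}{v}\,\dd t.
\end{equation*}
The dual-pairing estimate $\abs{\pair{(\p_t+\Delta)\varphi}{v}}\leq\norm{(\p_t+\Delta)\varphi}_{H^{-1}(\Omega)}\norm{\nabla v}_\Omega$ pointwise in time, Cauchy--Schwarz in $L^2(0,T)$ on the integral, and then a weighted Cauchy--Schwarz in $\R^2$ combining the two resulting terms, yield
\begin{equation*}
\abs{\Bz(\bm{v},\varphi)}\leq\sqrt{\norm{v}_X^2+\tfrac{1}{2}\norm{v_T}_\Omega^2}\cdot\sqrt{2\norm{\varphi(T)}_\Omega^2+\int_0^T\norm{(\p_t+\Delta)\varphi}_{H^{-1}(\Omega)}^2\,\dd t}=\norm{\bm{v}}_Z\cdot\normYs{\varphi},
\end{equation*}
the last equality being Lemma~\ref{lem:Ys_identity}. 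This gives the ``$\leq$'' direction in both~\eqref{eq:Z_infsup_1} and~\eqref{eq:Z_infsup_2}.

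For the reverse inequalities, I construct explicit maximizers by tracking the equality cases of the above Cauchy--Schwarz chain. For~\eqref{eq:Z_infsup_1}, given $\bm{v}=(v,v_T)\in Z$, let $\varphi_*\in Y$ be the solution of the backward heat equation
\begin{equation*}
\p_t\varphi_* + \Delta\varphi_* = 2\Delta v\ \text{in}\ \Omega\times(0,T),\qquad \varphi_*(T)=v_T,
\end{equation*}
whose existence in $Y$ follows by time-reversal from the standard well-posedness of the forward heat equation with source in $L^2(0,T;H^{-1}(\Omega))$ and initial datum in $L^2(\Omega)$, as already used in Section~\ref{sec:function_spaces_for_parabolic_pde}. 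Substituting into the rewritten $\Bz$ and invoking Lemma~\ref{lem:Ys_identity} gives $\Bz(\bm{v},\varphi_*)=2\norm{\bm{v}}_Z^2$ and $\normYs{\varphi_*}=2\norm{\bm{v}}_Z$, hence $\Bz(\bm{v},\varphi_*)/\normYs{\varphi_*}=\norm{\bm{v}}_Z$. For~\eqref{eq:Z_infsup_2}, given $\varphi\in Y$, I take $v_*(t)\in H^1_0(\Omega)$ to be the Riesz representative of $-(\p_t+\Delta)\varphi(t)\in H^{-1}(\Omega)$, i.e.\ the solution of $(\nabla v_*(t),\nabla w)_\Omega=-\pair{(\p_t+\Delta)\varphi(t)}{w}$ for all $w\in H^1_0(\Omega)$ and a.e.\ $t$, set $v_{T,*}=2\varphi(T)$, and put $\bm{v}_*=(v_*,v_{T,*})\in Z$. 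A direct computation then yields $\Bz(\bm{v}_*,\varphi)=\normYs{\varphi}^2$ and $\norm{\bm{v}_*}_Z=\normYs{\varphi}$, hence $\Bz(\bm{v}_*,\varphi)/\norm{\bm{v}_*}_Z=\normYs{\varphi}$.

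The only nontrivial point is the existence assertion for $\varphi_*$ in~\eqref{eq:Z_infsup_1}, but this reduces via time-reversal (exactly as in the proofs of Theorem~\ref{thm:inf_sup_X} and Lemma~\ref{lem:Ys_identity}) to the already-invoked forward parabolic existence theory, so no new analysis is required; everything else is just the bookkeeping needed to check that the prescribed maximizers saturate the weighted Cauchy--Schwarz inequality.
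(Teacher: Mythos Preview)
Your proof is correct and follows essentially the same approach as the paper: both establish the upper bound $\abs{\Bz(\bm{v},\varphi)}\leq\norm{\bm{v}}_Z\normYs{\varphi}$ via Cauchy--Schwarz together with Lemma~\ref{lem:Ys_identity}, and then construct explicit maximizers---a backward heat equation for~\eqref{eq:Z_infsup_1} and the Riesz representative for~\eqref{eq:Z_infsup_2}. The only difference is a harmless rescaling of your $\varphi_*$ (the paper takes $\varphi_*(T)=\tfrac{1}{2}v_T$ and $\p_t\varphi_*+\Delta\varphi_*=\Delta v$, so its $\varphi_*$ is exactly half of yours), and your $v_*$ coincides with the paper's choice $(-\Delta)^{-1}(-\p_t\varphi)+\varphi$ once the Riesz representation is unwound.
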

\begin{proof}
We start by showing that
\begin{equation}\label{eq:infsup_upper_bound}
\abs{\Bz(\bm{v},\varphi)}\leq \norm{\bm{v}}_Z\normYs{\varphi}\quad \forall \bm{v}\in Z,\;\forall \varphi\in Y.
\end{equation}
It follows from the definition of the $H^{-1}(\Omega)$-norm in~\eqref{eq:dual_norm} that\begin{equation}
\begin{split}
\left\lvert\int_0^T \left(-\pair{\partial_t \varphi}{v} + (\nabla \varphi, \nabla v )_\Omega \right) \dd t\right\rvert
&=\left\lvert\int_0^T -\pair{\p_t \varphi + \Delta \varphi}{v} \dd t\right\rvert
\\ &\leq \left(\int_0^T \norm{\p_t \varphi + \Delta \varphi}_{H^{-1}(\Omega)}^2\dd t\right)^{\frac{1}{2}}\norm{v}_X,
\end{split}
\end{equation}
for any $\varphi\in Y$ and $v\in X$, where we recall that $\norm{v}_X=\left(\int_0^T\norm{\nabla v}_\Omega^2\dd t\right)^{\frac{1}{2}}$.
Hence, the Cauchy--Schwarz inequality applied to the terms in $\Bz(\bm{v},\varphi)$ shows that
\begin{equation}
\begin{split}
\abs{\Bz(\bm{v},\varphi)} &\leq \left(\frac{1}{2}\norm{v_T}_
\Omega^2 + \norm{v}_X^2\right)^{\frac{1}{2}}
\left(2\norm{\varphi(T)}_\Omega^2+\int_0^T\norm{ \partial_t \varphi + 
\Delta \varphi}_{H^{-1}(\Omega)}^2\mathrm{d}t\right)^{\frac{1}{2}}
\\& =\norm{\bm{v}}_Z\normYs{\varphi},
\end{split}
\end{equation}
where we have used Lemma~\ref{lem:Ys_identity} in passing to the second line above. This yields~\eqref{eq:infsup_upper_bound}.

Next, let $\bm{v}=(v,v_T)\in Z$ be arbitrary, and let $\varphi_*\in Y$ be the unique solution of the backward parabolic problem: find $\varphi_* \in Y$ such that $\varphi_*(T)=\frac{1}{2}v_T$ and $\partial_t \varphi_* + \Delta \varphi_* =  \Delta v$ in $(0,T)$. 
Then, it is clear that $\Bz(\bm{v},\varphi)= \frac{1}{2}\norm{v_T}_\Omega^2+\norm{v}_X^2 = \norm{\bm{v}}_Z^2$. Furthermore, using Lemma~\ref{lem:Ys_identity}, we have
\begin{equation}
\normYs{\varphi_*}^2 = 2\norm{\varphi(T)}_\Omega^2 + \int_0^T \norm{\partial_t \varphi+\Delta \varphi}_{H^{-1}(\Omega)}^2\mathrm{d}t = \frac{1}{2}\norm{v_T}_
\Omega^2 + \norm{v}_X^2 = \norm{\bm{v}}_Z^2.
\end{equation}
Thus we obtain \eqref{eq:Z_infsup_1} from the above identities and the upper bound~\eqref{eq:infsup_upper_bound}. To show~\eqref{eq:Z_infsup_2}, we take $\bm{v}_*=(v,v_T)$ with $v=(-\Delta)^{-1}(-\partial_t \varphi) + \varphi \in X$, and $v_T = 2\varphi(T)\in L^2(\Omega)$, and we perform similar computations to find that $\norm{\bm{v}}_Z=\normYs{\varphi}$ and $\Bz(\bm{v},\varphi)=\normYs{\varphi}^2$. 
This shows~\eqref{eq:Z_infsup_2}.
\end{proof}

We now show how to apply Theorem~\ref{thm:Z_infsup} to the heat equation~\eqref{eq:parabolic}. After testing \eqref{eq:parabolic} with a test function $\varphi\in Y$ and integrating-by-parts in time, we see that the solution $u$ solves
\begin{equation}\label{eq:Z_weakform}
(u(T),\varphi(T))_{\Omega} + \int_0^T \left(-\pair{\p_t \varphi}{u} + (\nabla u,\nabla \varphi)_\Omega \right)\dd t = \int_0^T \pair{f}{\varphi}\dd t + (u_0,\varphi(0))_\Omega,
\end{equation}
for all $\varphi\in Y$.
Note that similarly to the weak formulation~\eqref{eq:X_formulation}, the formulation~\eqref{eq:Z_weakform} involves casting the time derivative onto a test function; however in~\eqref{eq:Z_weakform} we allow $\varphi \in Y\setminus \YT$, i.e.\ we do not require the test functions to vanish at the final time $T$.
Upon defining $\bm{u}\coloneqq(u,u(T))\in Z$, it follows that~\eqref{eq:Z_weakform} can be written equivalently as $\Bz(\bm{u},\varphi)=\int_0^T \pair{f}{\varphi}\dd t + (u_0,\varphi(0))_\Omega$ for all $\varphi\in Y$. 
Hence~\eqref{eq:Z_infsup_1} implies that
\begin{equation}\label{eq:Z_norm_solution}
\begin{split}
\norm{u}_{E}^2 &\coloneqq \frac{1}{2}\norm{u(T)}_\Omega^2 + \int_0^T \norm{\nabla u}_\Omega^2\dd t
 \\ &= \norm{\bm{u}}_Z^2 = \left[\sup_{\varphi\in Y\setminus\{0\}}\frac{\int_0^T\pair{f}{\varphi}\dd t + (u_0,\varphi(0))_\Omega}{\normYs{\varphi}}\right]^2.
\end{split} 
\end{equation}
This gives an alternative characterization of the energy norm in comparison to~\eqref{eq:energy_identity} above.
As is clear from the proof of Theorem~\ref{thm:Z_infsup}, the function~$\varphi$ that achieves the supremum on the right-hand side of \eqref{eq:Z_norm_solution} is generally not the same as $u$, which offers some explanation as to difference between using~\eqref{eq:Z_norm_solution} over \eqref{eq:energy_identity} as the starting point for the analysis of the energy norm.

\begin{remark}[Bibliographical remarks]
The analysis of the heat equation in terms of inf-sup stability has appeared in various forms in~\cite{ErnGuermond2004,ESV2017,ESV2019,SchwabStevenson2009,TantardiniVeeser2016,UrbanPatera2012}. 
To the best of our knowledge, the analysis of the energy norm presented above is original.
\end{remark}

\section{Discretization in time}\label{sec:discretization_in_time}

In this section we first consider the effects of the temporal discretization of \eqref{eq:parabolic}, without any spatial discretization. This leads us to considering the \emph{a posteriori} error analysis for a \emph{semi-discrete} method.
Our motivation for considering the temporal discretization first is to highlight some key aspects of the analysis that can be expected to hold regardless of any later choice of spatial discretization. 
In this section we shall temporarily assume that $u_0\in H^1_0(\Omega)$ in order to avoid some technical matters unrelated to the main ideas; this assumption will be later removed in the following sections once spatial discretization is also introduced.

\subsection{Implicit Euler Discretization}\label{sec:IE_Discretization}

\paragraph{The implicit Euler method as a finite difference method.}
The implicit Euler method is frequently derived from a finite difference approximation of the time derivative term in~\eqref{eq:parabolic}.
From this point of view, the discretization is introduced as follows: let $\{t_n\}_{n=0}^N$, for some integer $N\geq 1$, denote a strictly increasing sequence of time-step points, with $t_0 = 0$ and $t_N = T$.
Let $\tau_n = t_n - t_{n-1} >0$ denote the time-step length.

For each $n\in\{1,\dots,N\}$, let $H^1_0(\Omega)\ni u_n \approx u(t_n)$ be defined by 
\begin{equation}\label{eq:IE_as_FD}
\left( \frac{u_{n}-u_{n-1}}{\tau_n} , v \right)_{\Omega} + (\nabla u_n, \nabla v )_\Omega = \pair{ f_n}{v} \quad \forall v \in H^1_0(\Omega),
\end{equation}
where $u_0$ is the initial datum from~\eqref{eq:parabolic}, and where $H^{-1}(\Omega)\ni f_n \approx f(t_n)$ is some approximation of the source term which we discuss shortly below. 
Note that the existence and uniqueness of the approximations $u_n \in H^1_0(\Omega)$, $n\in \{1,\dots, N\}$, follows easily from the Lax--Milgram Theorem applied to a sequence of elliptic variational problems in the space $H^1_0(\Omega)$.

\begin{remark}[Choice of $f_n$]
In the following analysis, we allow for some flexibility in the choice of $f_n$.
A traditional choice for $f_n$ is simply the time-point value $f(t_n)$, however this might not be well-defined for general $f\in L^2(0,T;H^{-1}(\Omega))$ due to possible discontinuities in time.
A more general alternative is to let $f_n$ be the mean-value of $f$ over the time-interval $(t_{n-1},t_n)$, i.e.\
\begin{equation}\label{eq:time_mean_value}
f_n \coloneqq \frac{1}{\tau_n} \int_{t_{n-1}}^{t_n} f(t) \mathrm{d}t \in H^{-1}(\Omega).
\end{equation}
Observe that $f_n$ is thus well-defined for any $f\in L^2(0,T;H^{-1}(\Omega))$.
We do not require that $f_n$ to be defined by~\eqref{eq:time_mean_value} for all of the following results, although sharper results can sometimes be obtained specifically for the case where~\eqref{eq:time_mean_value} holds.
\end{remark}

When viewed in the form~\eqref{eq:IE_as_FD}, the implicit Euler method defines approximations of the solution at the time-points $\{t_n\}_{n=0}^N$.
However, for the purposes of \emph{a posteriori} error analysis, the values of the time-point values $\{u_n\}_{n=0}^N$ of the approximation must be extended to a function on the whole time interval $(0,T)$.
Indeed, the relevant norms for the analysis, which were introduced in the previous sections, require the error to be defined at least at almost every point in the time interval $(0,T)$. 
For the case of the implicit Euler method, there are at least two natural, and closely related, approaches to the construction of such functions from the time-point values of the approximations.
We consider each approach in turn.

\paragraph{Piecewise constant in time reconstruction.}

Given the time-points $\{t_n\}_{n=0}^N$, let $I_n\coloneqq (t_{n-1},t_n)$ denote the associated $n$-th time-interval. 
Note that $\{\overline{I_n}\}_{n=1}^N$ is a partition of $[0,T]$.
Let $\mathbb{V}_\tau^+$ denote the space of functions $v\colon [0,T]\rightarrow H^1_0(\Omega)$ that are left-continuous, and also piecewise constant with respect to the time partition. Note that functions $v\in \mathbb{V}_\tau^+ $ have a well-defined value $v(t)\in H^1_0(\Omega)$ for all $t\in [0,T]$, and moreover functions that agree a.e.\ in $(0,T)$ are not identified. 
The notation $\mathbb{V}_\tau^+$ reflects the fact that we are considering here a forward evolution problem.
Then, given the approximation $\{u_{n}\}_{n=0}^N$ defined by~\eqref{eq:IE_as_FD}, 
we define $u_\tau \in \mathbb{V}_\tau^+$ as the unique left-continuous piecewise constant function that equals $u_n$ on the time interval $I_n$, and satisfies in addition $u(0)=u_0$.
Note that $u_\tau \in X$, but in general $u_\tau$ is discontinuous so $u_\tau\not\in Y$.
Also, the left-continuity of $u_\tau$ ensures that $u_\tau(t_n)=u_n$ for each all time $n\in \{0,\dots,N\}$, i.e.\ $u_\tau$ agrees with the time-point values $\{u_{n}\}_{n=0}^N$ at the time-points $\{t_n\}_{n=0}^N$.

\paragraph{Continuous piecewise affine in time reconstruction.}

Alternatively, we can construct a continuous piecewise affine-in-time function from the time-point values. 
Let $U_\tau$ be the unique continuous function that is piecewise affine in time with respect to the time intervals $\{I_n\}_{n=1}^N$ and that equals $u_{n}$ at time $t_n$, for all $n=0,\dots,N$. In other words, $U_\tau$ satisfies
\[
U_\tau (t) = \frac{t-t_{n-1}}{\tau_n} u_n + \frac{t_n-t}{\tau_n} u_{n-1}\quad \forall t\in \overline{I_n}.
\]
Note that in particular, $U_\tau (0) = u_0$.
Owing to the simplifying assumption that $u_0 \in H^1_0(\Omega)$, it follows that $U_\tau(t) \in H^1_0(\Omega)$ for all $t\in [0,T]$. 
Since $U_\tau$ is continuous, it follows that $U_\tau \in H^1(0,T;H^1_0(\Omega)) \subset Y$. 
Note that the weak time-derivative of $U_\tau$ is piecewise constant with respect to the time intervals $\{I_n\}_{n=1}^N$, and thus $\partial_t U_\tau \in \tXt$ and we have
\begin{equation}\label{eq:time_reconstruction_def}
\partial_t U_\tau(t)  = \frac{u_n-u_{n-1}}{\tau_n} \quad \forall t\in (t_{n-1},t_n),\; \forall n\in\{1,\dots,N\}.
\end{equation}
Note that the two reconstructions $u_\tau$ and $U_\tau$ can each be recovered from the other: first, we have $u_\tau(t_n) = U_\tau(t_n)$ for all $n\in \{0,\dots,N\}$, and $U_\tau$ can be obtained from $u_\tau$ and from $u_0$ by interpolation.
The following identity for the difference will be useful later
\begin{equation}\label{eq:jump_difference_equation}
 u_\tau(t) - U_\tau(t) = \frac{t_n-t}{\tau_n}(u_n-u_{n-1}) \quad \forall t\in (t_{n-1},t_n], \quad \forall n\in\{1,\dots,N\}.
\end{equation}

\paragraph{The implicit Euler method: variational formulation.}\label{sec:IE_inconsistent}

We now show that the implicit Euler method~\eqref{eq:IE_as_FD} can be seen as a discretization of~\eqref{eq:Y_formulation}, involving both the piecewise constant reconstruction $u_\tau$ and the continuous piecewise affine reconstruction $U_\tau$.
Let $f_\tau \in L^2(0,T;H^{-1}(\Omega))$ be defined by $f_\tau(t) = f_n $ for all $t\in (t_{n-1},t_n)$, $n\in \{1,\dots,N\}$. 
Thus $f_\tau$ is piecewise constant in time, similar to functions in $\tXt$.
With $f_\tau$ thusly defined, we see that~\eqref{eq:IE_as_FD} is equivalent to
\begin{equation}\label{eq:IE_1}
(\partial_t U_\tau (t) , v )_\Omega + (\nabla u_\tau (t),\nabla v)_\Omega = \pair{f_\tau(t)}{v} \quad \forall v\in H^1_0(\Omega), \forall t\in I_n,
 \end{equation}
for each $n=1,\dots,N$.
Since $\partial_t U_\tau$, $u_\tau$ and $f_\tau$ are all also piecewise constant in time, we then see that~\eqref{eq:IE_1} is equivalent to the global-in-time form 
\begin{equation}\label{eq:semidiscrete_identity_1}
\int_0^T \left( (\partial U_{\tau},v)_\Omega + (\nabla u_\tau,\nabla v)_\Omega \right)\mathrm{d}t = \int_0^T \pair{f_\tau}{v} \mathrm{d}t \quad \forall v \in \tXt,
\end{equation}
where $\tXt $ denotes the space of $H^1_0(\Omega)$-valued functions that are piecewise constant with respect to $\{I_n\}_{n=1}^N$, i.e.\
\begin{equation}
\tXt \coloneqq \left\{ v\in L^2(0,T;H^1_0(\Omega))\colon \; v|_{I_n} \in \mathcal{P}_0(I_n;H^1_0(\Omega)) \quad \forall n\in\{1,\dots,N\} \right\},
\end{equation}
where $\mathcal{P}_0(I_n;H^1_0(\Omega))$ denotes the space of piecewise constant functions in time with values in $H^1_0(\Omega)$.
The global-in-time form~\eqref{eq:semidiscrete_identity_1} should then be compared with the continuous problem~\eqref{eq:Y_formulation}. 
We can either view~\eqref{eq:semidiscrete_identity_1} as a conforming \emph{but inconsistent} method for the continuous piecewise affine approximation $U_\tau$, or as a \emph{nonconforming} method for the piecewise constant and generally \emph{discontinuous} approximation $u_\tau$. 
As we aim to make clear in this tutorial, whichever point of view one takes, the inconsistency/nonconformity of the temporal discretization is the source of many of the challenges for the analysis of \emph{a posteriori} error bounds.

\paragraph{Comparison of the reconstructions.}
It is natural to ask which of two reconstructions $u_\tau$ and $U_\tau$ provides the better approximation of the exact solution $u$. 
This turns out to be a subtle question, even for comparing these approximations only in the norm of $X=L^2(0,T;H^1_0(\Omega))$.
On the one hand, supposing that a problem is fixed where $u\in H^1(0,T;H^1_0(\Omega))$ and assuming the data $f$ is sufficiently well-approximated, e.g.\ if~\eqref{eq:time_mean_value} holds, then it is known from~\emph{a priori} error analysis that $\norm{u-u_\tau}_X$ and $\norm{u-U_\tau}_X$ both have an overall first-order rate of convergence with respect to the time-step sizes, which is known to be sharp from computational experiments. Thus, in many cases, both yield the same rate of convergence in the small time-step limit.
On the other hand, such results from \emph{a priori} error analysis are by nature asymptotic, and do not answer the question for any given fixed temporal discretization. If instead, we fix the time-step size, and vary the problem to be solved, then it turns out that there can be significant differences between 
$\norm{u-u_\tau}_X$ and $\norm{u-U_\tau}_X$.
We illustrate this issue concretely with the following example.

\begin{example}\label{ex:comparison}
Consider the ordinary differential equation $u^\prime + \lambda u = 1$ on $(0,1)$, with $u(0)=0$, where $\lambda >0$. The exact solution is then $u(t)=\frac{1-\mathrm{e}^{-\lambda t}}{\lambda}$ for all $t\in [0,1]$.
This problem can be thought of as arising from a parabolic PDE after a transformation to Fourier modes, and suitable scaling of the parameters.
The solution $u$ is approximated by the implicit Euler method using as single time-step of length $1$.
Thus $u_\tau(t) = \frac{1}{1+\lambda}$ and $U_\tau(t) = \frac{t}{1+\lambda}$ for all $t\in [0,1]$. 
Then, direct calculations show that
\begin{equation}\label{eq:ex_asymptot_1}
\lim_{\lambda \tends \infty } \frac{\norm{u-u_\tau}_{L^2(0,1)}}{\norm{u-U_\tau}_{L^2(0,1)}} =  0,
\end{equation}
so $u_\tau$ is asymptotically a better approximation of $u$ than $U_\tau$ for large $\lambda$. However, for small $\lambda$, the situation is reversed since
\begin{equation}\label{eq:ex_asymptot_2}
\lim_{\lambda \tends 0} \frac{\norm{u-U_\tau}_{L^2(0,1)}}{\norm{u-u_\tau}_{L^2(0,1)}} = 0,
\end{equation}
so $U_\tau$ is asymptotically a better approximation of $u$ than $u_\tau$ for small $\lambda$.
\end{example}

The conclusion of Example~\ref{ex:comparison} is that there is generally no definitive answer to the question of comparing these two approximations, since either $u_\tau$ or $U_\tau$ might be the better approximation, depending on the problem data.
Furthermore, it follows from~\eqref{eq:ex_asymptot_1} and~\eqref{eq:ex_asymptot_2} that it is not possible to bound either $\norm{u-u_\tau}_X$ or $\norm{u-U_\tau}_X$ in terms of the other with constants that are robust with respect to parameters of the problem and the discretization.
We will see later in Section~\ref{sec:X-norm-bounds} that this issue has important repercussions concerning the efficiency of the error estimators when comparing the \emph{a posteriori} error analysis in the $X$ norm.

\subsection{A posteriori error analysis in the $L^2(H^1)\cap H^1(H^{-1})$-norm}\label{sec:Y-norm-bounds}

Since $U_\tau$ defined in~\eqref{eq:time_reconstruction_def} belongs to the space $Y$, it is natural to consider the error $\norm{u-U_\tau}_Y$.
The analysis that follows reveals that the error $\norm{u-U_\tau}_Y$ is closely related to the \emph{temporal jump estimator} $\eta_J$ defined by
\begin{equation}\label{eq:jump_estimator_semidiscrete}
\eta_J \coloneqq \norm{u_\tau - U_\tau}_X = \left(\sum_{n=1}^N \frac{\tau_n}{3} \norm{\nabla  (u_n-u_{n-1}) }^2_\Omega \right)^{\frac{1}{2}}.
\end{equation}
Note that the second identity in~\eqref{eq:jump_estimator_semidiscrete} above follows from~\eqref{eq:jump_difference_equation}.
The name of jump estimator derives from the fact that $\eta_J$ measures the difference in the $X$-norm between the generally discontinuous in time function $u_\tau$ with the continuous in time function $U_\tau$, with the difference determined by the temporal jump $u_n-u_{n-1}$ occurring at the time-step point $t_{n-1}$, for each $n=1\dots,N$.
 
\begin{theorem}\label{thm:apost_semidisc_Y_norm}
Suppose that $u_0\in H^1_0(\Omega)$. 
Then
\begin{align}
& \norm{u-U_\tau}_Y \leq \eta_J + \etaOscY ,\label{eq:semidisc_upper_bound_jump}
\\ & \eta_J \leq \norm{u-U_\tau}_Y + \etaOscY,\label{eq:semidisc_lower_bound_jump}
\end{align}
where the data oscillation $\etaOscY$ is defined by $\etaOscY\coloneqq \norm{f-f_\tau}_{L^2(0,T;H^{-1}(\Omega))} $.
\end{theorem}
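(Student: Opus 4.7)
The plan is to apply Theorem~\ref{thm:inf_sup_parabolic_Y} to $\varphi = u - U_\tau$. The assumption $u_0 \in H^1_0(\Omega)$ together with the construction of $U_\tau$ gives $U_\tau(0) = u_0 = u(0)$, so the boundary term at $t=0$ in the inf-sup identity vanishes, and
\begin{equation*}
\norm{u - U_\tau}_Y = \sup_{v \in X\setminus\{0\}} \frac{\By(u - U_\tau, v)}{\norm{v}_X}.
\end{equation*}
The first task, which is the main technical step, is to derive a residual identity for $\By(u - U_\tau, v)$ valid for \emph{every} $v \in X$. The weak formulation~\eqref{eq:Y_formulation} yields $\By(u,v) = \int_0^T \pair{f}{v}\,\dd t$. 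To handle $\By(U_\tau, v)$, I would exploit the fact that $\partial_t U_\tau$, $u_\tau$, and $f_\tau$ are all piecewise constant in time on $\{I_n\}_{n=1}^N$: replacing $v$ on each $I_n$ by its average $\overline{v}|_{I_n} \coloneqq \tau_n^{-1}\int_{I_n} v\,\dd t \in H^1_0(\Omega)$ leaves the integrals $\int_0^T (\partial_t U_\tau, v)_\Omega\,\dd t$, $\int_0^T (\nabla u_\tau, \nabla v)_\Omega\,\dd t$, and $\int_0^T \pair{f_\tau}{v}\,\dd t$ unchanged. Since $\overline{v} \in \tXt$, applying the semidiscrete identity~\eqref{eq:semidiscrete_identity_1} to $\overline{v}$ and adding and subtracting $\int_0^T (\nabla u_\tau,\nabla v)_\Omega\,\dd t$ leads to the residual identity
\begin{equation*}
\By(u - U_\tau, v) = \int_0^T \pair{f - f_\tau}{v}\,\dd t + \int_0^T (\nabla(u_\tau - U_\tau), \nabla v)_\Omega\,\dd t \quad \forall v \in X.
\end{equation*}

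For the upper bound~\eqref{eq:semidisc_upper_bound_jump}, I would apply the Cauchy--Schwarz inequality along with the definition~\eqref{eq:dual_norm} of the $H^{-1}(\Omega)$-norm to both terms of the residual identity, obtaining $\abs{\By(u - U_\tau, v)} \leq (\eta_J + \etaOscY)\norm{v}_X$; dividing by $\norm{v}_X$ and passing to the supremum yields~\eqref{eq:semidisc_upper_bound_jump}. For the lower bound~\eqref{eq:semidisc_lower_bound_jump}, I would test the residual identity with the specific choice $v = u_\tau - U_\tau \in X$, producing
\begin{equation*}
\eta_J^2 = \By(u - U_\tau, u_\tau - U_\tau) - \int_0^T \pair{f - f_\tau}{u_\tau - U_\tau}\,\dd t.
\end{equation*}
Bounding the first term by $\norm{u - U_\tau}_Y\, \eta_J$ through Theorem~\ref{thm:inf_sup_parabolic_Y} (which in particular gives $\By(\varphi,v) \leq \norm{\varphi}_Y \norm{v}_X$) and the second by $\etaOscY\, \eta_J$ via Cauchy--Schwarz, then dividing by $\eta_J$ (the case $\eta_J = 0$ being trivial), yields~\eqref{eq:semidisc_lower_bound_jump}.

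The main obstacle is the time-averaging trick that extends the Galerkin orthogonality~\eqref{eq:semidiscrete_identity_1} from $\tXt$ to the whole of $X$; this is what allows the residual identity to be tested against arbitrary $v \in X$ as required by the inf-sup framework. Once that identity is established, both directions follow from standard duality manipulations, and the roles of $\eta_J$ (from the nonconformity/inconsistency between $u_\tau$ and $U_\tau$) and $\etaOscY$ (from the data approximation) appear naturally as the two residual contributions.
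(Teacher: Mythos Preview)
Your proof is correct and follows essentially the same route as the paper: the inf-sup identity of Theorem~\ref{thm:inf_sup_parabolic_Y} reduces the problem to the residual, the residual identity is derived from the semidiscrete scheme, Cauchy--Schwarz gives the upper bound, and testing with $v=u_\tau-U_\tau$ gives the lower bound. The only difference is that your time-averaging trick is unnecessary here: since there is no spatial discretization, \eqref{eq:IE_1} already holds for \emph{every} $v\in H^1_0(\Omega)$ at a.e.\ $t$, so $\int_0^T\bigl[\pair{f_\tau}{v}-\pair{\p_t U_\tau}{v}-(\nabla u_\tau,\nabla v)_\Omega\bigr]\dd t=0$ for all $v\in X$ directly, which is exactly how the paper proceeds.
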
 
\begin{proof}
We start by defining the residual $\calR_Y\colon Y\tends X^*$ defined by
 \begin{equation}\label{eq:R_Y_def}
 \begin{aligned}
\pair{\calR_Y(\vphi)}{v}_{X^*\times X} \coloneqq \By(u-\vphi,v) &&& \forall \vphi \in Y,\;\forall v \in X,
\end{aligned}
\end{equation}
where $\By$ is defined in~\eqref{eq:bilinear_Y} above and where $\pair{\cdot}{\cdot}_{X^*\times X}$ denotes the duality pairing between the dual space $X^*$ and $X$.
The dual norm of the residuals is naturally defined by $\norm{\calR_Y(\vphi)}_{X^*}\coloneqq \sup_{v\in X\setminus\{0\}} \tfrac{\pair{\calR_Y(\vphi)}{v}}{\norm{v}_X}$.
Theorem~\ref{thm:inf_sup_parabolic_Y} implies the following equivalence between the error and dual norm of the residual: 
\begin{equation} 
\begin{aligned}
\norm{u-\vphi}_{Y}^2 = \norm{\calR_Y(\vphi)}_{X^*}^2 + \norm{u_0-\vphi(0) }_\Omega^2 &&&\forall \vphi \in Y.
\end{aligned}
 \label{eq:Y_error_residual_equivalence}
\end{equation}
Recalling that $U_\tau(0)=u_0=u(0)$, the equivalence between error and residual in~\eqref{eq:Y_error_residual_equivalence} then implies that
\begin{equation}\label{eq:Y-norm_residual}
\norm{u-U_\tau}_Y^2 = \norm{\calR_Y(U_\tau)}_{X^*}^2 + \norm{u_0-U_\tau(0)}_\Omega^2= \norm{\calR_Y(U_\tau)}_{X^*}^2.
\end{equation}
We compute the residual $\calR_Y(U_\tau)$ to find that
\begin{multline}\label{eq:residual_computation}
\pair{\calR_Y(U_\tau)}{v}_{X^*\times X} =\int_0^T \left[  \pair{f}{v} -\pair{\p_t U_\tau}{v}-(\nabla U_\tau,\nabla v)_\Omega \right]\mathrm{d}t
\\ = \int_0^T  \left[\pair{f-f_\tau}{v}  + (\nabla (u_\tau - U_\tau),\nabla v)_\Omega\right] \mathrm{d}t
  + \int_{0}^T \underbrace{\left[\pair{f_\tau}{v} -\pair{\p_t U_\tau}{v}-(\nabla u_\tau,\nabla v)_\Omega \right]}_{=0 \text{ by } \eqref{eq:IE_1}} \mathrm{d}t 
\\  = \int_0^T \pair{f-f_\tau}{v} \mathrm{d}t+ \int_0^T (\nabla (u_\tau -  U_\tau),\nabla v)_\Omega\mathrm{d}t \quad \forall v \in X.
\end{multline}
Then, recalling that $X=L^2(0,T;H^1_0(\Omega))$, we observe that $\int_0^T (\nabla (u_\tau -  U_\tau),\nabla v)_\Omega\mathrm{d}t$ is the inner-product in $X$ between $u_\tau - U_\tau$ and the test function $v\in X$.
Therefore, we apply the Cauchy--Schwarz inequality to the last line of~\eqref{eq:residual_computation} to find that $\norm{R_Y(U_\tau)}_{X^*}\leq \eta_J+\etaOscY$.
Combined with~\eqref{eq:Y-norm_residual}, this implies the upper bound~\eqref{eq:semidisc_upper_bound_jump}.
To prove~\eqref{eq:semidisc_lower_bound_jump}, let $v = u_\tau-U_\tau$, noting that $v\in X$ owing to the hypothesis $u_0\in H^1_0(\Omega)$.
Then, using~\eqref{eq:residual_computation}, we have $\norm{u_\tau - U_\tau}_X^2 = \pair{R_Y(U_\tau)}{v}_{X^*\times X}-\int_0^T\pair{f-f_\tau}{v}\mathrm{d}t$, which implies that $\norm{u_\tau-U_\tau}_X \leq \norm{R_Y(U_\tau)}_{X^*}+\etaOscY$.
This yields~\eqref{eq:semidisc_lower_bound_jump} after using~\eqref{eq:Y-norm_residual}.
\end{proof}

\begin{remark}[Case of vanishing data oscillation]
Theorem~\ref{thm:apost_semidisc_Y_norm} immediately implies that in the case of no data oscillation, i.e.\ if $f=f_\tau$, then we have the identity
\begin{equation}\label{eq:semidiscrete_apost_identity}
\norm{u-U_\tau}_Y = \eta_J.
\end{equation}
In other words, the estimator $\eta_J$ is then exact for the $Y$-norm error.
As we shall see in the following sections, this connection further remains important also in the fully discrete setting, although additional estimators arising from the spatial discretization will also appear.
\end{remark}

\begin{remark}[Local-in-time lower bounds]
In order to keep the presentation simple, we have shown the efficiency bound~\eqref{eq:semidisc_upper_bound_jump} as a global-in-time bound, i.e.\ over the whole time-interval $(0,T)$. 
However, it is possible to show that the efficiency is also local in time, in particular
\begin{equation}\label{eq:time_local_lower_bound_jump}
\int_{I_n} \norm{\nabla (u_\tau-U_\tau)}_\Omega^2\mathrm{d}t \leq \int_{I_n}\left[\norm{\p_t (u-U_\tau)}_{H^{-1}(\Omega)} + \norm{\nabla(u-U_\tau)}_\Omega + \norm{f-f_\tau}_{H^{-1}(\Omega)} \right]^2\mathrm{d}t.
\end{equation}
for each $n\in\{1,\dots,N\}$.
We leave the proof of~\eqref{eq:time_local_lower_bound_jump} as an exercise to the reader, see also \cite{ESV2017}.
\end{remark}

Since Theorem~\ref{thm:apost_semidisc_Y_norm} allows for general choices of $f_\tau$, the data oscillation term~$\norm{f-f_\tau}_{L^2(0,T;H^{-1}(\Omega)}$ appears on the right-hand side of both~\eqref{eq:semidisc_upper_bound_jump} and \eqref{eq:semidisc_lower_bound_jump}.
In particular, this leaves open the possibility that the jump estimator may significantly overestimate the error in the case where the data oscillation is dominant in these bounds.
However, sharper results can be shown in the special case where $f_\tau$ is given as the temporal mean-value of $f$, c.f.~\eqref{eq:time_mean_value}, as shown in Theorem~\ref{thm:semidisc_data_osc_sharp} below.

\begin{theorem}[Error-dominated oscillation]\label{thm:semidisc_data_osc_sharp}
Suppose that $u_0\in H^1_0(\Omega)$ and that $f_\tau$ is given by~\eqref{eq:time_mean_value}. Then
\begin{equation}\label{eq:semidisc_lower_dataosc_proof}
\int_{I_n} \norm{\nabla (u_\tau - U_\tau)}_\Omega^2 + \norm{f-f_\tau}_{H^{-1}(\Omega)}^2\mathrm{d}t \leq C \int_{I_n} \left(\norm{\p_t (u-U_\tau)}_{H^{-1}(\Omega)}^2 + \norm{\nabla (u-U_\tau)}_\Omega^2\right)\mathrm{d}t,
\end{equation}
for each $n\in\{1,\dots,N\}$, where $C$ is some constant independent of any other quantities. 
\end{theorem}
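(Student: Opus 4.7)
The plan is to localise on $I_n$ and exploit the $L^2$-orthogonality in time of $f - f_\tau$ to functions that are constant in time on $I_n$, which holds precisely because $f_\tau$ is chosen as the time-mean in~\eqref{eq:time_mean_value}. Starting from the residual computation~\eqref{eq:residual_computation} in the proof of Theorem~\ref{thm:apost_semidisc_Y_norm} (recall $U_\tau(0) = u_0$, so $\pair{\mathcal{R}_Y(U_\tau)}{v}_{X^*\times X} = \By(u - U_\tau, v)$), one rearranges to obtain the identity
\begin{equation*}
\int_0^T \pair{f - f_\tau}{v}\,\dd t = \int_0^T \left( \pair{\p_t(u - U_\tau)}{v} + (\nabla(u - u_\tau), \nabla v)_\Omega\right)\dd t \quad \forall v \in X,
\end{equation*}
and this, together with the original form~\eqref{eq:residual_computation} involving $\nabla(u_\tau - U_\tau)$, is tested against two carefully chosen $v$ supported on $I_n$.

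First I would bound the jump by inserting $v(t) = \mathbf{1}_{I_n}(t)(u_n - u_{n-1})$ into the original residual identity~\eqref{eq:residual_computation}. Orthogonality makes the oscillation term vanish, and since $(u_\tau - U_\tau)(t) = \tfrac{t_n - t}{\tau_n}(u_n - u_{n-1})$ integrates in time to $\tfrac{\tau_n}{2}(u_n - u_{n-1})$, the left-hand side reduces to $\tfrac{\tau_n}{2}\norm{\nabla(u_n - u_{n-1})}_\Omega^2$. Cauchy--Schwarz on the right (in space using~\eqref{eq:dual_norm}, and then in time) bounds the right-hand side by $\sqrt{2\tau_n}\,\norm{\nabla(u_n - u_{n-1})}_\Omega$ times the square root of the right-hand side of~\eqref{eq:semidisc_lower_dataosc_proof}. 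Dividing through and recalling $\int_{I_n}\norm{\nabla(u_\tau - U_\tau)}_\Omega^2\,\dd t = \tfrac{\tau_n}{3}\norm{\nabla(u_n - u_{n-1})}_\Omega^2$ controls the jump contribution to~\eqref{eq:semidisc_lower_dataosc_proof}.

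Next I would bound the data oscillation by exploiting orthogonality a second time. Writing $\bar v \coloneqq \tau_n^{-1}\int_{I_n} v\,\dd t$ for arbitrary $v \in L^2(I_n;H^1_0(\Omega))$, orthogonality yields $\int_{I_n}\pair{f - f_\tau}{v}\,\dd t = \int_{I_n}\pair{f - f_\tau}{v - \bar v}\,\dd t$, while $\norm{v - \bar v}_{L^2(I_n;H^1_0(\Omega))} \leq \norm{v}_{L^2(I_n;H^1_0(\Omega))}$, so the local dual norm is attained over zero-mean test functions. Feeding $v - \bar v$ (extended by zero outside $I_n$) into the rearranged identity displayed above and applying Cauchy--Schwarz bounds $\norm{f - f_\tau}_{L^2(I_n;H^{-1}(\Omega))}^2$ by a constant times $\int_{I_n}\bigl(\norm{\p_t(u - U_\tau)}_{H^{-1}(\Omega)}^2 + \norm{\nabla(u - u_\tau)}_\Omega^2\bigr)\,\dd t$; the triangle inequality $\norm{\nabla(u - u_\tau)}_\Omega \leq \norm{\nabla(u - U_\tau)}_\Omega + \norm{\nabla(U_\tau - u_\tau)}_\Omega$ together with the jump bound from the previous step converts this into the right-hand side of~\eqref{eq:semidisc_lower_dataosc_proof}. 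The main subtlety, more conceptual than technical, is recognising that the orthogonality from~\eqref{eq:time_mean_value} must be used twice in complementary ways: once to kill the oscillation when isolating the jump, and once to strip the time-mean from arbitrary test functions so that the oscillation itself can be estimated; after that, the proof reduces to routine Cauchy--Schwarz and triangle inequalities.
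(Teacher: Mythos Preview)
Your proof is correct and follows essentially the same route as the paper's: test the residual identity with the constant-in-time function $v = u_n - u_{n-1}$ on $I_n$ (where the mean-value property of $f_\tau$ kills the oscillation term) to bound the jump, then rearrange~\eqref{eq:residual_computation} for general $v$ to bound the oscillation by the error plus the already-controlled jump. Your additional step of subtracting the time-mean $\bar v$ from the test function when bounding the oscillation is harmless but unnecessary, since the rearranged identity already holds for all $v\in X$ and Cauchy--Schwarz applies directly.
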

\begin{proof}
We sketch the proof since it follows some of the ideas in~\cite[Proof of Theorem~5.1]{ESV2017}.
The main idea is to choose the semi-discrete test function $v\in \tXt$ in~\eqref{eq:semidiscrete_identity_1} to be given by $v|_{I_n} \coloneqq u_n-u_{n-1}$ and $v$ vanishing on $(0,T)\setminus I_n$.
Note that $u_0\in H^1_0(\Omega)$ implies that $v\in X$ in the case $n=1$.
Also, observe that $\int_{I_n}\norm{\nabla v}_\Omega^2\mathrm{d}t = 3 \int_{I_n}\norm{\nabla(u_\tau-U_\tau)}_\Omega^2\mathrm{d}t$ as a consequence of~\eqref{eq:jump_difference_equation}.
Then, the definition of $f_\tau$ in \eqref{eq:time_mean_value} and the fact that $v\in \tXt$ is constant-in-time implies that $\int_0^T(f_\tau,v)_\Omega\mathrm{d}t=\int_0^T\pair{f}{v}\mathrm{d}t$. 
Thus, we obtain from~\eqref{eq:semidiscrete_identity_1} that
\begin{equation}
\begin{aligned}
\int_{I_n} \norm{\nabla (u_\tau - U_\tau)}_\Omega^2 \mathrm{d}t&= \frac{2}{3}\int_{I_n}(\nabla (u_\tau-U_\tau),\nabla v)_\Omega\mathrm{d}t 
\\& = \frac{2}{3}\int_{I_n} \left[\pair{f}{v} - (\p_t U_\tau,v)_\Omega - (\nabla U_\tau,\nabla v)_\Omega \right] \mathrm{d}t
\\ &= \frac{2}{3} \int_{I_n} \left[\pair{\p_t (u-U_\tau)}{v}  + (\nabla(u-U_\tau),\nabla v)_\Omega\right]\mathrm{d}t.
\end{aligned}
\end{equation}
This shows that $\int_{I_n} \norm{\nabla (u_\tau - U_\tau)}_\Omega^2\mathrm{d}t$ is bounded by the right-hand side of~\eqref{eq:semidisc_lower_dataosc_proof}.
Then, we re-arrange \eqref{eq:residual_computation} for general $v\in X$ to find also that $\int_{I_n}\norm{f-f_\tau}_{H^{-1}(\Omega)}^2\mathrm{d}t$ is also bounded by the right-hand side of~\eqref{eq:semidisc_lower_dataosc_proof}.
\end{proof}

\begin{remark}[Bibliographical remarks]
A lower bound for the jump estimator similar to those in~\eqref{eq:semidisc_lower_bound_jump} and~\eqref{eq:time_local_lower_bound_jump} was first shown by Verf\"urth in~\cite{Verfurth2003}, in the case of a fully discrete approximation using the $\theta$ method in time, which includes the implicit Euler method as a special case.
The approach adopted in~\cite{Verfurth2003} towards proving the bound is substantially different to the one shown here, leading to a constant that possibly depends on the shape-regularity of the spatial meshes. 
The analysis for the temporal jump estimator was later improved by Ern, Smears \& Vohral\'{i}k in \cite{ESV2017}, which treats more general discontinuous Galerkin (DG) time-discretizations of arbitrary order, and which includes the implicit Euler method as a special case.
In particular, the lower bound for the jump estimator given in~\cite[Theorem~5.1]{ESV2017} features an explicit efficiency constant that is independent of the spatial discretization and that is robust with respect to the temporal polynomial degree.
The lower bound of Theorem~\ref{thm:semidisc_data_osc_sharp} that removes the data oscillation from the right-hand sides appears to be original.
Theorem~\ref{thm:semidisc_data_osc_sharp} showcases an example where the data oscillation term is dominated by the error, in a similar spirit to the work of Kreuzer \& Veeser in~\cite{KreuzerVeeser2021} for elliptic problems.
\end{remark}

\subsection{A posteriori error bound for the $L^2(H^1)$-norm}\label{sec:X-norm-bounds}

We now turn to the bounding the $L^2(H^1)$-norms of the error, namely $\norm{u-u_\tau}_X$ and $\norm{u-U_\tau}_X$.
We shall consider both of these quantities, since, depending on the situation, either $u_\tau$ or $U_\tau$ might be the better approximation of $u$ in the $X$-norm, see Example~\ref{ex:comparison}.
Note that the previous section on the \emph{a posteriori} error bounds for the $Y$-norm, where it is recalled that $Y=L^2(0,T;H^1_0(\Omega))\cap H^1(0,T;H^{-1}(\Omega))$ immediately yield, at least in some sense, upper bounds on the error in the $X$-norm, with $X=L^2(0,T;H^1_0(\Omega))$. 
However, as will be seen below, a more direct approach yields sharper results, especially in terms of the data oscillation.
Recall that the temporal jump estimator~$\eta_J$ is defined in~\eqref{eq:jump_estimator_semidiscrete} above.
 
\begin{theorem}\label{thm:semidiscrete_X_bound}
Suppose that $u_0\in H^1_0(\Omega)$ and that $f_\tau$ is given by~\eqref{eq:time_mean_value}.
Then
\begin{subequations}\label{eq:semidiscrete_X_bound}
\begin{align}
\norm{u-u_\tau}_X \leq \eta_J + \etaOscX,\label{eq:semidiscrete_X_bound_ut} 
\\ \norm{u-U_\tau}_X \leq \eta_J + \etaOscX,
\label{eq:semidiscrete_X_bound_Ut}
\end{align}
\end{subequations}
where the data oscillation~$\etaOscX$ is defined by
\begin{equation}
\etaOscX \coloneqq \sup_{\vphi \in \YT\setminus\{0\}}\frac{\int_0^T\pair{f-f_\tau}{\vphi}\dd t}{\normYT{\vphi}}.
\end{equation}
\end{theorem}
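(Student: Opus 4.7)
The plan is to derive both inequalities from the inf-sup identity of Theorem~\ref{thm:inf_sup_X}, applied to the errors $u-U_\tau$ and $u-u_\tau$, both of which lie in $X$. Concretely, I will establish that
\[
\abs{\Bx(u-U_\tau,\vphi)} \leq (\eta_J+\etaOscX)\normYT{\vphi}, \qquad \abs{\Bx(u-u_\tau,\vphi)} \leq (\eta_J+\etaOscX)\normYT{\vphi},
\]
for every $\vphi\in\YT$, after which Theorem~\ref{thm:inf_sup_X} yields~\eqref{eq:semidiscrete_X_bound} at once. A shared ingredient is the following extension of~\eqref{eq:semidiscrete_identity_1} to arbitrary test functions in $X$: for all $\vphi\in X$,
\[
\int_0^T\left[(\p_t U_\tau,\vphi)_\Omega + (\nabla u_\tau,\nabla \vphi)_\Omega\right]\dd t = \int_0^T\pair{f_\tau}{\vphi}\dd t.
\]
This follows by testing \eqref{eq:semidiscrete_identity_1} with the $L^2$-in-time projection of $\vphi$ onto $\tXt$ and then noting that, because $\p_t U_\tau$, $u_\tau$ and $f_\tau$ are all piecewise constant in time, each of the three integrands is unchanged if the projection is replaced by $\vphi$ itself.

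For the continuous-affine reconstruction $U_\tau$: the hypothesis $u_0\in H^1_0(\Omega)$ ensures $U_\tau\in H^1(0,T;H^1_0(\Omega))\subset Y$, so integration by parts in time together with $\vphi(T)=0$ gives $\int_0^T-\pair{\p_t\vphi}{U_\tau}\dd t = \int_0^T\pair{\p_t U_\tau}{\vphi}\dd t + (\vphi(0),u_0)_\Omega$. Substituting into $\Bx(U_\tau,\vphi)$, subtracting from $\Bx(u,\vphi) = \int_0^T\pair{f}{\vphi}\dd t + (u_0,\vphi(0))_\Omega$, and using the extended Galerkin identity to trade $\pair{\p_t U_\tau}{\vphi}$ for $\pair{f_\tau}{\vphi}-(\nabla u_\tau,\nabla\vphi)_\Omega$, the boundary pairings cancel and one obtains
\[
\Bx(u-U_\tau,\vphi) = \int_0^T\pair{f-f_\tau}{\vphi}\dd t + \int_0^T (\nabla(u_\tau-U_\tau),\nabla\vphi)_\Omega\dd t.
\]
The first term is bounded by $\etaOscX\normYT{\vphi}$ directly from the definition, and the second by Cauchy--Schwarz by $\eta_J\norm{\vphi}_X \leq \eta_J\normYT{\vphi}$, yielding~\eqref{eq:semidiscrete_X_bound_Ut}.

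For the piecewise-constant reconstruction $u_\tau$: I decompose $u_\tau=U_\tau+(u_\tau-U_\tau)$ and write $\Bx(u-u_\tau,\vphi) = \Bx(u-U_\tau,\vphi) + \Bx(U_\tau-u_\tau,\vphi)$ using the formula from the previous step. The additional term $\Bx(U_\tau-u_\tau,\vphi) = \int_0^T\left[-\pair{\p_t\vphi}{U_\tau-u_\tau} + (\nabla(U_\tau-u_\tau),\nabla\vphi)_\Omega\right]\dd t$ has a gradient contribution that cancels exactly with the gradient term in $\Bx(u-U_\tau,\vphi)$, leaving
\[
\Bx(u-u_\tau,\vphi) = \int_0^T\pair{f-f_\tau}{\vphi}\dd t + \int_0^T\pair{\p_t\vphi}{u_\tau-U_\tau}\dd t.
\]
The last integral is controlled via the definition of the $H^{-1}(\Omega)$-norm and Cauchy--Schwarz in time:
\[
\abs{\int_0^T\pair{\p_t\vphi}{u_\tau-U_\tau}\dd t} \leq \left(\int_0^T\norm{\p_t\vphi}_{H^{-1}(\Omega)}^2\dd t\right)^{1/2}\norm{u_\tau-U_\tau}_X \leq \eta_J\normYT{\vphi},
\]
the final inequality coming from the fact that $\int_0^T\norm{\p_t\vphi}_{H^{-1}(\Omega)}^2\dd t$ is one of the terms appearing in~\eqref{eq:YT_def}. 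Together with the definition of $\etaOscX$, this proves~\eqref{eq:semidiscrete_X_bound_ut}.

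I expect the main obstacle to be handling the in-time discontinuity of $u_\tau$. A direct attempt at $\Bx(u-u_\tau,\vphi)$ via summation-by-parts in time produces $L^2$-pairings of $\vphi$ at the interior nodes $t_n$ against the temporal jumps $u_n-u_{n-1}$, which do not obviously regroup into $\eta_J\normYT{\vphi}$. The trick that bypasses this is the splitting $u_\tau=U_\tau+(u_\tau-U_\tau)$: the \emph{smooth} part $U_\tau\in Y$ is amenable to genuine integration by parts together with the Galerkin identity, while the \emph{singular} part $u_\tau-U_\tau\in X$ is controlled in $X$ by $\eta_J$ and then paired against $\p_t\vphi$ through the $L^2(H^{-1})$ component of $\normYT{\vphi}$. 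This asymmetry --- using the $\norm{\cdot}_X$ portion of $\normYT{\cdot}$ to dispatch the $U_\tau$ estimate but the $L^2(H^{-1})$ portion to dispatch the $u_\tau$ estimate --- is precisely what ensures that the two reconstructions satisfy the \emph{same} upper bound despite the very different regularity of $U_\tau$ and $u_\tau$.
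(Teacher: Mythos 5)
Your proposal is correct and follows essentially the same route as the paper: the inf--sup identity of Theorem~\ref{thm:inf_sup_X}, integration by parts in time exploiting $\vphi(T)=0$, and the Galerkin orthogonality~\eqref{eq:IE_1} to reduce the residual to $\int_0^T\pair{f-f_\tau}{\vphi}\dd t$ plus a pairing of $u_\tau-U_\tau$ against either $\nabla\vphi$ (for $U_\tau$) or $\p_t\vphi$ (for $u_\tau$), each bounded by $\eta_J\normYT{\vphi}$. The only cosmetic difference is the order of operations --- the paper computes $\calR_X(u_\tau)$ directly via the identity~\eqref{eq:semidiscrete_X_bound_1} and leaves the $U_\tau$ case as an exercise, whereas you derive the $U_\tau$ residual first and obtain the $u_\tau$ case by adding $\Bx(U_\tau-u_\tau,\vphi)$ --- but the resulting residual representations and estimates are identical.
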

\begin{proof}
We detail the proof of~\eqref{eq:semidiscrete_X_bound_ut}, and we leave the proof of~\eqref{eq:semidiscrete_X_bound_Ut} as an exercise to the reader.
We define the residual functional~$\calR_X \colon X\tends [\YT]^*$ by
\begin{equation}\label{eq:R_X_def}
\begin{aligned}
 \pair{\calR_X(v)}{\vphi}_{[\YT]^*\times \YT} \coloneqq \Bx(u-v,\vphi) &&&\forall v \in X,\; \forall \vphi \in \YT.
 \end{aligned}
 \end{equation}
Theorem~\ref{thm:inf_sup_X} implies that
\begin{equation}\label{eq:X_error_residual_equivalence}
\begin{aligned}
\norm{u-v}_{X} = \norm{\calR_X(v)}_{[\YT]^*} & & &\forall v\in X.
\end{aligned}
\end{equation}
Considering the residual $\pair{R_X(u_\tau)}{\varphi}_{Y_T^*\times Y_T}$ for arbitrary $\varphi \in Y_T$, we find that
\begin{multline}
\pair{R_X(u_\tau) }{\varphi}_{Y_T^*\times Y_T} = \int_0^T \left[ \pair{f}{\varphi} + \pair{\p_t \varphi}{u_\tau}  - (\nabla u_\tau,\nabla \varphi)_\Omega \right] \mathrm{d}t + (u_0,\varphi(0))_\Omega 
\\  = \int_0^T \underbrace{\left[\pair{f_\tau}{\varphi} - \pair{\p_t U_\tau}{\varphi}  - (\nabla u_\tau,\nabla \varphi)_\Omega \right]}_{=0 \text{ by } \eqref{eq:IE_1}} \mathrm{d}t 
 \\ +\int_0^T \left[\pair{\p_t \varphi}{u_\tau - U_\tau}  +\pair{f-f_\tau}{\varphi}\right] \mathrm{d}t,
\end{multline}
where we have used the identity based on integration-by-parts in time:
\begin{equation}\label{eq:semidiscrete_X_bound_1}
(u_0,\varphi(0))_\Omega = - \int_0^T \pair{\p_t \varphi}{U_\tau}  \mathrm{d}t - \int_0^T \pair{\p_t U_\tau}{\varphi} \mathrm{d}t \quad \forall \varphi \in Y_T.
\end{equation}
We now use the Cauchy--Schwarz inequality to bound
\begin{equation}\label{eq:semidiscrete_X_bound_2}
 \int_0^T \lvert\pair{\p_t \varphi}{u_\tau - U_\tau} \rvert \mathrm{d}t  \leq \norm{\varphi}_{Y_T} \norm{u_\tau - U_\tau }_X.
\end{equation}
where it is recalled that~$\norm{\cdot}_{Y_T}$ is defined in~\eqref{eq:YT_def} above.
Therefore, the equivalence between error and residual~\eqref{eq:X_error_residual_equivalence} implies that
\begin{equation}
\norm{u-u_\tau}_X = \sup_{\varphi \in Y_T\setminus\{0\}}\frac{\pair{R_X(u_\tau)}{\varphi}_{Y_T^*\times Y_T} }{\norm{\varphi}_{Y_T}} \leq \eta_J + \etaOscX .
\end{equation}
This completes the proof of \eqref{eq:semidiscrete_X_bound_ut}.
\end{proof}

\begin{remark}[Comparison of data oscillation terms]
To see the difference between the bounds of Theorems~\ref{thm:apost_semidisc_Y_norm} and \ref{thm:semidiscrete_X_bound}, consider for example the case where $f\in L^2(0,T;L^2(\Omega))$ and where $f_\tau$ is the temporal mean-value projection of $f$, i.e. $f_\tau$ given by~\eqref{eq:time_mean_value}.
Then, it is known from~\cite{ESV2019} that one obtains the following bound for the data oscillation $\etaOscX$:
\begin{equation}
\etaOscX \leq \left( \sum_{n=1}^N \frac{\tau_n}{2\pi} \norm{f-f_\tau}_{L^2(I_n;L^2(\Omega))}^2 \right)^{\frac{1}{2}}.
\end{equation}
Thus we see that, in many cases, $\etaOscX$ can converge to zero faster by an additional half-order in the time-step size in comparison to $\etaOscY$.
Note also that $\etaOscX$ is often higher-order compared to the error measured in the $X$-norm.
By contrast, $\etaOscY$ can be of the same order as the error in the $Y$-norm, although it is bounded by the error in the case of mean-value data approximations~\eqref{eq:time_mean_value}, as shown by Theorem~\ref{thm:semidisc_data_osc_sharp}.
\end{remark}

\paragraph{The problem of lower bounds for the $X$-norm errors.}
Theorem~\ref{thm:semidiscrete_X_bound} gives upper bounds on the errors $\norm{u-u_\tau}_X$ and $\norm{u-U_\tau}_X$.
However, no lower bound is given.
We address this matter below, where we shall see that the jump estimator is not generally efficient with respect to the smallest between $\norm{u-u_\tau}_X$ and $\norm{u-U_\tau}_X$, but it is efficient with respect to the largest of these quantities.

\begin{example}\label{ex:inefficiency_jump_X_norm}
Consider the setting of Example~\ref{ex:comparison} with the same notation. Note that this corresponds to a problem where $f$ is constant and thus there is no data oscillation.
Then, it follows from~\eqref{eq:ex_asymptot_1} that
\begin{equation}
 \lim_{\lambda\tends \infty} \frac{\norm{u-u_\tau}_{X}}{\eta_J} = \lim_{\lambda\tends \infty} \frac{\norm{u-u_\tau}_{L^2(0,1)}}{\norm{u_\tau-U_\tau}_{L^2(0,1)}}  = 0.
\end{equation}
In this case, the estimator $\eta_J$ is not efficient relative to the error $\norm{u-u_\tau}_X$ when $\lambda$ becomes large.
It also follows from~\eqref{eq:ex_asymptot_2} that
\begin{equation}
 \lim_{\lambda\tends 0} \frac{\norm{u-U_\tau}_{X}}{\eta_J} = \lim_{\lambda\tends 0} \frac{\norm{u-U_\tau}_{L^2(0,1)}}{\norm{u_\tau-U_\tau}_{L^2(0,1)}} = 0.
\end{equation}
In this case, the estimator $\eta_J$ is not efficient relative to the error $\norm{u-U_\tau}_X$ when $\lambda$ becomes small.
\end{example}

Example~\ref{ex:inefficiency_jump_X_norm} shows that, in general, either of the effectivity indices $\frac{\eta_J}{\norm{u-u_\tau}_X}$ and $\frac{\eta_J}{\norm{u-U_\tau}_X}$ can be arbitrarily large. In other words, there are cases where the jump estimator can significantly overestimate the smallest $X$-norm error. It is therefore not possible to show that the estimator $\eta_J$ is bounded by either $\norm{u-u_\tau}_X$ or $\norm{u-U_\tau}_X$.
At present, it is not known how to obtain a computable estimator that is efficient with respect to the smallest $X$-norm error. 

\begin{remark}[Alternative error measures]\label{rem:error_measure_X}
The lack of efficiency of the temporal jump estimators in general for the $X$-norm error has motivated various works in the literature to consider combinations of $\norm{u-u_\tau}_X$ and $\norm{u-U_\tau}_X$, along with other quantities, as some stronger measure of the error, see for instance~\cite{AkrivisMakridakisNochetto2006,AkrivisMakridakisNochetto2009,MakridakisNochetto2006,NochettoSavareVerdi2000,Schotzau2010}.
For example, one can consider the error measure
\begin{equation}\label{eq:extend_X_norm_measure}
\mathcal{E}_X\coloneqq \norm{u-u_\tau}_X + \norm{u-U_\tau}_X.
\end{equation} 
The motivation for considering these augmented error measures is that the jump estimator is then trivially a lower bound for $\mathcal{E}_X$, as a result of the triangle inequality $\eta_J \leq \mathcal{E}_X$.
However, it is clear from Example~\ref{ex:inefficiency_jump_X_norm} that there is no general equivalence between $\mathcal{E}_X$ (or similar augmented error measures) with either of $\norm{u-u_\tau}_X$ or $\norm{u-U_\tau}_X$ considered individually.
See Remark~\ref{rem:relations} below on how $\mathcal{E}_X$ is, in some sense, more naturally related to the \emph{energy norm} error for an alternative reconstruction of the numerical solution.
\end{remark}

\begin{remark}[Rate of convergence of the estimator for smooth solutions]
Although $\eta_J$ is not generally efficient with respect to the smallest of $\norm{u-u_\tau}_X$ and $\norm{u-U_\tau}_X$, this does not contradict the fact that $\eta_J$ still has optimal rates of convergence with respect to the time-step sizes if the solution has some additional regularity. 
As mentioned already in Section~\ref{sec:IE_Discretization}, for a fixed problem with $u$ sufficiently regular, then \emph{a priori} error analysis shows that 
$\norm{u-u_\tau}_X$, $\norm{u-U_\tau}_X$ and thus also $\eta_J$ all have an overall first-order rate of convergence with respect to the time-step sizes.
We stress that there is no contradiction here with the earlier discussion of the lack of efficiency of the estimators, since the \emph{a priori} analysis results concern a fixed problem and considers the limit of small time-steps, whereas the efficiency of the estimators is a stronger property which must take into account a large range of problems and choices of time-step size.
\end{remark}

\subsection{A posteriori error bound for the energy norm error and a hypercircle theorem}\label{sec:hypercircle_theorem}

We now turn to the \emph{a posteriori} error bounds for the energy norm.
Recall that by definition $u_\tau$ is taken to be left-continuous and satisfies $u_\tau(t_n)=u_n$, for all $n=0,\dots,N$, c.f.\ \eqref{eq:IE_as_FD}.
Therefore, it makes sense to consider the energy norm of the error between $u$ and its piecewise constant approximation $u_\tau$, namely
\begin{equation}
\norm{u-u_\tau}_E^2 = \frac{1}{2}\norm{u(T)-u_\tau(T)}_\Omega^2+\int_0^T\norm{\nabla(u-u_\tau)}_\Omega^2\dd t.
\end{equation}
Likewise, we can also consider the energy norm of the error between $u$ and the continuous piecewise affine approximation $U_\tau$, given by
\begin{equation}
\norm{u-U_\tau}_E^2 = \frac{1}{2}\norm{u(T)-U_\tau(T)}_\Omega^2 + \int_0^T\norm{\nabla(u-U_\tau)}_\Omega^2\dd t.
\end{equation}
We will see shortly below (c.f.\ Corollary~\ref{cor:hypercircle}) that, for the special case of vanishing data oscillation, i.e. $f=f_\tau$, we have the Pythagoras identity
\begin{equation}\label{eq:hypercircle_energy}
\norm{u-u_\tau}_E^2 + \norm{u-U_{\tau}}_E^2 = \norm{u_\tau-U_\tau}_E^2 = 
\eta_J^2,
\end{equation}
where we recall that the temporal jump estimator~$\eta_J$ is defined in~\eqref{eq:jump_estimator_semidiscrete} above.
Observe that the equality $\norm{u_\tau-U_\tau}_E = \norm{u_\tau-U_\tau}_X = \eta_J $ holds since $u_\tau(T)=U_\tau(T)=u_N$ the value of $u_\tau$ on the final time-interval $I_N$.
Similar to the problems encountered with regards to the efficiency of $\eta_J$ in the $X$-norm setting, we note that Example~\ref{ex:inefficiency_jump_X_norm} above can be extended to show that, in general, the estimator $\eta_J$ is not an efficient one for either of $\norm{u-u_\tau}_E$ or $\norm{u-U_\tau}_E$.

However, the Pythagoras identity expressed by~\eqref{eq:hypercircle_energy} suggests that some notion of efficiency can be recovered by changing point of view, since it is equivalent to
\begin{equation}\label{eq:hypercircle_energy_2}
\begin{aligned}
\norm{u-\uha }_E = \frac{1}{2}\norm{u_\tau-U_\tau}_E = \frac{1}{2}\eta_J, &&& \uha \coloneqq \frac{1}{2}(u_\tau+U_\tau).
\end{aligned}
\end{equation}
The identity~\eqref{eq:hypercircle_energy_2} shows that the exact solution~$u$ and its approximations $u_\tau$ and $U_\tau$ all lie on a hypercircle of radius $\frac{1}{2}\eta_J$ and centre $\uha$, when the geometry is determined by the inner-product related to the energy norm. 
Thus, the estimator $\eta_J$ is efficient (in fact, exact) with regards to the energy norm error when we regard the function $\uha$ as the notion of numerical solution to be compared against the true solution.
We shall see that it is the second form~\eqref{eq:hypercircle_energy_2} that generalizes most readily to the general case where we no longer require the data oscillation to vanish exactly.

\begin{theorem}\label{thm:energy_norm_bound}
Let $\uha \coloneqq \frac{1}{2}(u_\tau+U_\tau)$.
We have the bounds
\begin{subequations}\label{eq:energy_norm_bounds}
\begin{align}
\norm{u-\uha}_E &\leq \frac{1}{2}\eta_J + \etaOscE,
\\ \frac{1}{2}\eta_J &\leq \norm{u-\uha}_E + \etaOscE,
\end{align}
\end{subequations}
where the data oscillation term $\etaOscE$ is defined by
\begin{equation}
\etaOscE \coloneqq \sup_{\varphi\in Y\setminus\{0\}}\frac{\int_0^T \pair{f-f_\tau}{\varphi}\dd t}{\normYs{\varphi}}.
\end{equation}
\end{theorem}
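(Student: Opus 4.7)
The plan is to apply the inf-sup identity \eqref{eq:Z_infsup_1} from Theorem~\ref{thm:Z_infsup} to the pair $\bm{\overline{u}}_\tau \coloneqq (\uha, u_N) \in Z$, where $u_N$ is the value of $u_\tau$ on $I_N$. Since $u_\tau(T) = U_\tau(T) = u_N$, we have $\uha(T)=u_N$, hence $\norm{\bm{u}-\bm{\overline{u}}_\tau}_Z = \norm{u-\uha}_E$, and \eqref{eq:Z_infsup_1} yields
\begin{equation*}
\norm{u-\uha}_E = \sup_{\varphi\in Y\setminus\{0\}} \frac{\Bz(\bm{u}-\bm{\overline{u}}_\tau,\varphi)}{\normYs{\varphi}}.
\end{equation*}
The goal is then to compute $\Bz(\bm{u}-\bm{\overline{u}}_\tau,\varphi)$ explicitly, isolating the temporal jump and the data oscillation contributions.

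For the computation of the residual, I would decompose $\uha$ in two different ways inside the bilinear form: write $\uha = U_\tau + \tfrac{1}{2}(u_\tau-U_\tau)$ in the time-derivative term (so that I can integrate by parts using $U_\tau \in Y$) and $\uha = u_\tau + \tfrac{1}{2}(U_\tau-u_\tau)$ in the spatial gradient term (so that the leading piece matches the implicit Euler identity). After the integration by parts $-\int_0^T \pair{\p_t \varphi}{U_\tau}\dd t = -(\varphi(T),u_N)_\Omega+(\varphi(0),u_0)_\Omega+\int_0^T(\varphi,\p_t U_\tau)_\Omega\dd t$, the boundary terms combine with $(u_N,\varphi(T))_\Omega$ in $\Bz(\bm{\overline{u}}_\tau,\varphi)$ to cancel. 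To eliminate the leading part, I would test the implicit Euler identity~\eqref{eq:IE_as_FD} on each $I_n$ with $v=\int_{I_n}\varphi\,\dd t \in H^1_0(\Omega)$; since $\p_t U_\tau$, $u_\tau$ and $f_\tau$ are constant on $I_n$, this produces the global-in-time identity $\int_0^T\bigl[(\varphi,\p_t U_\tau)_\Omega+(\nabla\varphi,\nabla u_\tau)_\Omega\bigr]\dd t = \int_0^T \pair{f_\tau}{\varphi}\dd t$. Combining this with $\Bz(\bm{u},\varphi) = \int_0^T\pair{f}{\varphi}\dd t + (u_0,\varphi(0))_\Omega$ from \eqref{eq:Z_weakform} yields the key residual identity
\begin{equation*}
\Bz(\bm{u}-\bm{\overline{u}}_\tau,\varphi) = \int_0^T\pair{f-f_\tau}{\varphi}\dd t + \tfrac{1}{2}\int_0^T \pair{(\p_t-\Delta)\varphi}{u_\tau-U_\tau}\dd t,
\end{equation*}
where I have used that $w \coloneqq u_\tau-U_\tau \in X$ (thanks to $u_0\in H^1_0(\Omega)$) so $(\nabla\varphi,\nabla w)_\Omega = \pair{-\Delta\varphi}{w}$.

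To close the estimates I would exploit the time-reversal invariance of $\normYs{\cdot}$ noted after \eqref{eq:Ys_def}. Applying Lemma~\ref{lem:Ys_identity} to the time-reversed function $R\varphi = \varphi(T-\cdot)$, for which $(\p_t+\Delta)(R\varphi) = -R((\p_t-\Delta)\varphi)$ and $(R\varphi)(T)=\varphi(0)$, gives the complementary identity $\normYs{\varphi}^2 = 2\norm{\varphi(0)}_\Omega^2 + \int_0^T \norm{(\p_t-\Delta)\varphi}_{H^{-1}(\Omega)}^2\dd t$. Feeding this into the residual formula and applying Cauchy--Schwarz to the jump term with $\norm{w}_X = \eta_J$ immediately gives $|\Bz(\bm{u}-\bm{\overline{u}}_\tau,\varphi)| \leq (\etaOscE + \tfrac{1}{2}\eta_J)\normYs{\varphi}$, from which the upper bound follows. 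For the lower bound, I would choose $\varphi_*\in \Yo$ as the unique solution of the forward heat problem $\p_t \varphi_*-\Delta\varphi_* = -\Delta w$ with $\varphi_*(0)=0$; then the complementary identity gives $\normYs{\varphi_*}=\norm{w}_X = \eta_J$ and the residual evaluated at $\varphi_*$ becomes $\int_0^T\pair{f-f_\tau}{\varphi_*}\dd t + \tfrac{1}{2}\eta_J^2$. Rearranging and dividing by $\normYs{\varphi_*}$ yields $\tfrac{1}{2}\eta_J \leq \norm{u-\uha}_E + \etaOscE$.

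The main obstacle, and the place where some care is required, is the sign mismatch in the residual: the direct calculation naturally produces the \emph{forward} heat operator $(\p_t-\Delta)\varphi$ acting on the test function, whereas Lemma~\ref{lem:Ys_identity} delivers $\normYs{\varphi}$ through the \emph{backward} operator $(\p_t+\Delta)\varphi$. This discrepancy is precisely resolved by the time-reversal invariance of $\normYs{\cdot}$, and once the complementary identity is in hand, the remainder of the argument is a straightforward Cauchy--Schwarz together with well-posedness of a forward heat equation with zero initial condition.
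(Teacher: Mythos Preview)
Your proof is correct and follows essentially the same route as the paper: both start from the inf-sup identity~\eqref{eq:Z_infsup_1}, derive the same residual formula by splitting $\uha$ asymmetrically between the time-derivative and gradient terms and invoking~\eqref{eq:IE_1}, and then handle the jump contribution via the time-reversal symmetry of $\normYs{\cdot}$. The only cosmetic difference is that the paper packages the last step as a single identity $\sup_{\varphi}\frac{\text{(jump term)}}{\normYs{\varphi}}=\eta_J$ by recognising the jump term as $\Bz\bigl((R(u_\tau-U_\tau),0),R\varphi\bigr)$ and invoking~\eqref{eq:Z_infsup_1} again, whereas you unfold this into a complementary form of Lemma~\ref{lem:Ys_identity} and an explicit optimiser $\varphi_*$; the content is the same.
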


\begin{proof}
Recall that $Z=X\times L^2(\Omega)$.
We start by defining $(\bm{u}_\tau,\bm{U}_\tau,\overline{\bm{u}}_\tau) \in Z^3$ by
\begin{equation}
\begin{aligned}
\bm{u}_\tau \coloneqq (u_\tau,u_\tau(T)), && \bm{U}_\tau \coloneqq (U_\tau,U_\tau(T)) && \overline{\bm{u}}_\tau \coloneqq (\uha,\uha(T)).
\end{aligned}
\end{equation}
We also define $\bm{u}\coloneqq (u,u(T))\in Z$. Then, Theorem~\ref{thm:Z_infsup} shows that
\begin{equation}\label{eq:energy_norm_bound_1}
\norm{u-\uha}_E = \norm{\bm{u}-\overline{\bm{u}}_\tau}_Z = \sup_{\varphi\in Y\setminus\{0\}}\frac{\Bz(\bm{u}-\overline{\bm{u}}_\tau,\varphi)}{\normYs{\varphi}}.
\end{equation}
It follows from the weak formulation~\eqref{eq:Z_weakform} that, for any $\varphi\in Y$,
\begin{multline}\label{eq:energy_norm_bound_2}
\Bz(\bm{u}-\overline{\bm{u}}_\tau,\varphi) = \int_0^T \pair{f}{\varphi}\mathrm{d}t + (u_0,\varphi(0))_{\Omega}
\\ - (\overline{u}_\tau(T),\varphi(T))_\Omega - \int_0^T\left[-\pair{\p_t \varphi}{\overline{u}_\tau}+(\nabla \overline{u}_\tau,\nabla \varphi)_{\Omega}\right]\mathrm{d}t.
\end{multline}
Consider now a fixed but arbitrary $\varphi \in Y$.
Integrating-by-parts with respect to time and using the identities $U_\tau(T)=u_\tau(T)=\overline{u}_\tau(T)$, $U_\tau(0)=u_0$, we obtain from~\eqref{eq:semidiscrete_identity_1} that
\begin{equation}\label{eq:energy_norm_bound_3}
    0= (\overline{u}_\tau(T),\varphi(T))_\Omega - (u_0,\varphi(0))_{\Omega}  + \int_{0}^T\left[-\pair{\p_t \varphi}{U_\tau} + (\nabla u_\tau,\nabla \varphi)_\Omega - \pair{f_\tau}{\varphi}\right]\mathrm{d}t.
\end{equation}
Therefore, adding~\eqref{eq:energy_norm_bound_3} to \eqref{eq:energy_norm_bound_2} above and noting some cancellations, we find that 
\begin{equation}\label{eq:energy_norm_bound_4}
\Bz(\bm{u}-\overline{\bm{u}}_\tau,\varphi) = \int_0^T \pair{f-f_{\tau}}{\varphi}\mathrm{d}t
 + \int_0^T \left(\pair{\p_t \varphi}{\overline{u}_\tau-U_\tau} + (\nabla(u_\tau-\overline{u}_\tau),\nabla \varphi)_\Omega \right)\mathrm{d}t.
\end{equation}
Crucially, observe that 
\begin{equation}
\overline{u}_\tau-U_\tau = u_\tau - \overline{u}_{\tau} = \frac{1}{2}(u_\tau-U_\tau).
\end{equation}
Hence, \eqref{eq:energy_norm_bound_4} simplifies to
\begin{equation}
\Bz(\bm{u}-\overline{\bm{u}}_\tau,\varphi) = \frac{1}{2}\int_0^T \left[ \pair{\p_t \varphi}{u_\tau-U_\tau}+(\nabla \varphi,\nabla (u_\tau-U_\tau))_\Omega\right] \dd t   + \int_0^T\pair{f-f_\tau}{\varphi}\dd t,
\end{equation}
which holds for arbitrary $\varphi \in Y$.
Therefore, to prove~\eqref{eq:energy_norm_bounds}, it is enough to show that
\begin{equation}\label{eq:energy_norm_bound_7}
\sup_{\varphi\in Y\setminus\{0\}} \frac{\int_0^T \left[ \pair{\p_t \varphi}{u_\tau-U_\tau}+(\nabla \varphi,\nabla (u_\tau-U_\tau))_\Omega\right] \dd t}{\normYs{\varphi}} = \eta_J.
\end{equation}
To show~\eqref{eq:energy_norm_bound_7}, let $R\colon Y\tends Y$ denote the time-reversal map $R\vphi = \vphi(T-\cdot)$. Note that $\normYs{\R \vphi} = \normYs{\vphi}$ for all $\vphi \in Y$, i.e.\ $R$ is an isometry when $Y$ is equipped with the norm $\normYs{\cdot}$. 
It is clear that a change of variables gives
\begin{equation}
\int_0^T \left[ \pair{\p_t \varphi}{u_\tau-U_\tau}+(\nabla \varphi,\nabla (u_\tau-U_\tau))_\Omega\right] \dd t = \Bz (\bm{v}, R\vphi ),
\end{equation}
where $\bm{v}\coloneqq (R (u_\tau-U_\tau), 0) \in Z$. Therefore, we obtain~\eqref{eq:energy_norm_bound_7} from the identities
\begin{multline}
\sup_{\vphi \in Y\setminus\{0\}}\frac{\Bz(\bm{v},R\vphi)}{\normYs{\vphi}}
  = \sup_{\vphi \in Y\setminus\{0\}}\frac{\Bz(\bm{v},R\vphi)}{\normYs{R\vphi}} 
   =  \sup_{\widetilde{\vphi} \in Y\setminus\{0\}}\frac{\Bz(\bm{v},\widetilde{\vphi})}{\normYs{\widetilde{\vphi}}} 
\\   \underset{\eqref{eq:Z_infsup_1}}{=} \norm{\bm{v}}_Z = \norm{R(u_\tau - U_\tau)}_X = \norm{u_\tau - U_\tau}_X = \eta_J.
\end{multline}
This completes the proof of~\eqref{eq:energy_norm_bounds}.
\end{proof}

Theorem~\ref{thm:energy_norm_bound} immediately implies the hypercircle theorem/Pythagoras identity as a corollary when the data oscillation vanishes.
\begin{corollary}[Hypercircle/Pythagoras identity]\label{cor:hypercircle}
Suppose that $f=f_\tau$.
Then~\eqref{eq:hypercircle_energy} and~\eqref{eq:hypercircle_energy_2} hold.
\end{corollary}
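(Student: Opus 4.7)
The plan is to deduce both identities of the corollary almost immediately from Theorem~\ref{thm:energy_norm_bound}. First, since $f=f_\tau$, the data oscillation term $\etaOscE$ in~\eqref{eq:energy_norm_bounds} vanishes identically, so the two inequalities of Theorem~\ref{thm:energy_norm_bound} collapse to the single identity $\norm{u-\uha}_E = \tfrac{1}{2}\eta_J$. This is already the second assertion~\eqref{eq:hypercircle_energy_2}, so the only remaining work is to obtain the Pythagoras form~\eqref{eq:hypercircle_energy} from it.

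Next, I would observe that $\norm{\cdot}_E$ is a Hilbert norm induced by the inner product
\begin{equation}
(v,w)_E \coloneqq \tfrac{1}{2}(v(T),w(T))_\Omega + \int_0^T (\nabla v,\nabla w)_\Omega \,\dd t,
\end{equation}
defined on a space (for instance $C([0,T];L^2(\Omega))\cap X$) containing $u$, $u_\tau$ and $U_\tau$. Hence the parallelogram identity applies, and I would invoke it with $a \coloneqq u - u_\tau$ and $b \coloneqq u - U_\tau$, for which $a+b = 2(u - \uha)$ and $a-b = U_\tau - u_\tau$, giving
\begin{equation}
\norm{u-u_\tau}_E^2 + \norm{u-U_\tau}_E^2 = 2\norm{u-\uha}_E^2 + \tfrac{1}{2}\norm{u_\tau - U_\tau}_E^2.
\end{equation}

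To simplify the second term on the right, note that $u_\tau(T) = U_\tau(T) = u_N$ since $u_\tau$ is left-continuous and $U_\tau$ interpolates the nodal values, so the boundary contribution in $\norm{u_\tau-U_\tau}_E^2$ vanishes and $\norm{u_\tau-U_\tau}_E = \norm{u_\tau-U_\tau}_X = \eta_J$ by the definition of $\eta_J$ in~\eqref{eq:jump_estimator_semidiscrete}. Substituting this together with $\norm{u-\uha}_E = \tfrac{1}{2}\eta_J$ from the first step yields
\begin{equation}
\norm{u-u_\tau}_E^2 + \norm{u-U_\tau}_E^2 = 2\cdot \tfrac{1}{4}\eta_J^2 + \tfrac{1}{2}\eta_J^2 = \eta_J^2,
\end{equation}
which is~\eqref{eq:hypercircle_energy}. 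There is essentially no obstacle here: the entire content sits in Theorem~\ref{thm:energy_norm_bound}, and the corollary is a direct algebraic consequence via the parallelogram identity plus the fact that the final-time values of $u_\tau$ and $U_\tau$ coincide.
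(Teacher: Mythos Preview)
Your proof is correct and follows essentially the same route as the paper: the paper simply states that Theorem~\ref{thm:energy_norm_bound} immediately yields the corollary once $\etaOscE=0$, relying on the already-noted equivalence between \eqref{eq:hypercircle_energy} and \eqref{eq:hypercircle_energy_2}, and you have made that equivalence explicit via the parallelogram identity. One small cosmetic point: $u_\tau$ is only left-continuous, so it does not lie in $C([0,T];L^2(\Omega))\cap X$ as you wrote, but the energy inner product only requires a well-defined value at $t=T$ and membership in $X$, both of which hold, so the argument is unaffected.
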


\begin{remark}[Relation to augmented error measures]\label{rem:relations}
Recalling Remark~\ref{rem:error_measure_X}, several works in the literature have treated augmented error measures, such as the quantity $\mathcal{E}_X$ defined in~\eqref{eq:extend_X_norm_measure} above.
The results in this section give some insight into these alternative error measures, since in the case of vanishing data oscillation, we have the equivalence
\begin{equation}
2\norm{u-\uha}_E\leq \mathcal{E}_X \leq 4 \norm{u-\uha}_E,
\end{equation}
which follows easily from $2\norm{u-\uha}=\eta_J \leq \mathcal{E}_X$ by the triangle inequality, and from $\mathcal{E}_X\leq 2 \eta_J=4\norm{u-\uha}_E $ by Theorem~\ref{thm:semidiscrete_X_bound}.
Therefore, the quantity $\mathcal{E}_X$ is seen to be equivalent to the energy norm error $\norm{u-\uha}_E$, at least globally in space and in time.
\end{remark}

\begin{remark}[Bibliographical remarks]
Prager and Synge~\cite{PragerSynge1947} showed a hypercircle identity for approximations of the system of elasticity.
In fact, Prager and Synge~\cite[p.~248]{PragerSynge1947} originally suggested the idea of considering the center of the circle as the approximate solution, in this case $\uha$ as we have done in Theorem~\ref{thm:energy_norm_bound} above.
Their result has since been extended to a wide range of problems, especially in the context of elliptic problems~\cite{BraessSchoberl2008,DestuynderMetivet1999,ESV2017b,ErnStephansenVohralik2010,SmearsVohralik2020,Vohralik2010}.
The hypercircle theorem is thus a fundamental concept in \emph{a posteriori} error analysis, and it is well-understood how it relates to the primal and dual formulations of problems that have an energy-minimization principle.
\end{remark}

\section{Discretization in time and in space}\label{sec:fully_discrete}

We now turn to a fully discrete approximation of~\eqref{eq:parabolic}. For simplicity, we shall restrict our attention to the simplest case of a conforming finite element method on a fixed spatial mesh coupled with the implicit Euler discretization in time.
In order to make use of some results in the literature, we shall restrict our attention to problems in at most three spatial dimensions, i.e.\ we now assume that $1\leq \dim \leq 3$.
We further assume that the bounded open set $\Omega$ is in additional polytopal and has Lipschitz boundary $\partial \Omega$.
Let $\T$ be a conforming simplicial mesh on $\Omega$.
For each element $K\in\T$, we let $h_K$ denote the diameter of $K$. We define the mesh-size function $h_{\T}\in L^\infty(\Omega)$ by $h_{\T}|_K\coloneqq h_K$ for each element $K\in\T$.
In the following, the notation for inequalities $a \lesssim b$ will allow the hidden constant to depend on the shape-regularity parameter of $\T$, defined as $\theta_\T\coloneqq \max_{K\in\T}\frac{h_K}{\rho_K}$, where $\rho_K$ denotes the diameter of the largest inscribed balls in $K$. However, the hidden constants will be otherwise independent of the size of the mesh elements.
Let $p\geq 1$ denote a fixed integer which will correspond to the polynomial degree of the finite element space to be defined below.
The $H^1_0(\Omega)$-conforming finite element space of degree $p$ is denoted by $\VTp$, and is defined by
\begin{equation}\label{eq:VTp_def}
\VTp \coloneqq \{ v \in H^1_0(\Omega), \; v|_K \in \calP_{p}(K) \quad\forall K\in\T \},
\end{equation}
where $\calP_p(K)$ denotes the space of real-valued polynomials of total degree at most $p$ on $K$.
Keeping the same notation as in Section~\ref{sec:IE_Discretization} regarding the partition of $(0,T)$ into time-steps, we consider a fully discrete approximation  defined as follows: for each $n\in \{1,\dots, N\}$, find $u_{h,\tau,n}\in \VTp$ such that
\begin{equation}\label{eq:IE_FEM}
\begin{aligned}
\left(\frac{u_{h,\tau,n}-u_{h,\tau,n-1}}{\tau_n}, v_{h}\right)_\Omega + (\nabla u_{h,\tau,n},\nabla v_h)_\Omega = (f_{h,\tau,n},v_{h})_\Omega  &&& \forall v_h \in \VTp,
\end{aligned}
\end{equation}
where $u_{h,\tau,0} \in \VTp$ is some discrete approximation of the initial datum $u_0$, and where $f_{h,\tau,n}\in L^2(\Omega)$ is some discrete approximation of $f(t_n)$. In the analysis below, we allow for rather general choices of $f_{h,\tau,n}$, with the only assumptions required being that $f_{h,\tau,n}|_K \in \mathcal{P}_{p}(K)$ for each $K\in \T$, for each $n\in\{1,\dots,N\}$.

\paragraph{Piecewise-constant-in-time reconstruction.}

Let $\Vhtp$ denote the space of all left-continuous functions $v\colon [0,T]\rightarrow \VTp$ that are piecewise constant with respect to the time partition $\{I_n\}_{n=1}^N$, i.e.\ $v|_{I_n}\in \mathcal{P}_0(I_n;\VTp) $ for each $n\in\{1,\dots,N\}$, where  we recall that $I_n=(t_{n-1},t_n)$ is the $n$-th time-interval.
Note that a function  $v\in \Vhtp$ thus has a well-defined value $v(t)\in \VTp$ for each time $t\in[0,T]$. Note that from this point of view, functions in $\Vhtp$ that agree for almost all $t\in [0,T]$ are not identified.
We then define $\uht\in \Vhtp$ as the unique function that satisfies
\begin{equation}
\begin{aligned}
\uht(0)=u_{h,\tau,0}, &&& \uht|_{I_n} =u_{h,\tau,n} \quad \forall n\in\{1,\dots,N\}.
\end{aligned}
\end{equation}
Observe also that left-continuity of $\uht$ ensures that $\uht(t_n) = u_{h,\tau,n}$ for each $n\in \{0,\dots, N\}$.

\paragraph{Continuous piecewise-affine-in-time reconstruction.}
We define also $\Uht \colon [0,T]\rightarrow \VTp$ as the unique continuous piecewise-affine function that satisfies
\begin{equation}\label{eq:fd_affine_recons}
\begin{aligned}
\Uht(t)= \frac{t-t_{n-1}}{\tau_n} u_{h,\tau,n} + \frac{t_n-t}{\tau_n} u_{h,\tau,n-1} &&& \forall t \in [t_{n-1},t_n].
\end{aligned}
\end{equation}
Observe that, by definition, $\Uht(t_n)=u_{h,\tau,n}$ for each $n\in \{0,\dots,N\}$.

\paragraph{Reformulation of the fully discrete scheme.}
We now define the function $f_{h,\tau} \in L^2(0,T;L^2(\Omega))$ by $f_{h,\tau}|_{I_n} \coloneqq f_{h,\tau,n}$ for $n\in \{1,\dots,N\}$, where $f_{h,\tau,n}$ is the chosen approximation of $f(t_n)$ appearing in \eqref{eq:IE_FEM} above.
Under the assumptions above on the $f_{h,\tau,n}$, it follows that $f_{h,\tau}$ is piecewise constant in time with respect to the time-intervals $\{(t_{n-1},t_n)\}_{n=1}^N$ and piecewise polynomial of degree at most $p$ in space with respect to the mesh $\T$.
It is then clear that \eqref{eq:IE_FEM} is equivalent to: 
\begin{equation}\label{eq:IE_FEM_2}
  ( \p_t \Uht (t) , v_h )_\Omega +  (\nabla \uht(t) , \nabla v_h)_\Omega = (f_{h,\tau}(t),v_h)_\Omega  \quad \forall v_h\in \VTp, \quad \text{a.e.\ } t\in I_n.
\end{equation}
for each $n\in \{1,\dots, N\}$.

\subsection{Construction of the equilibrated flux}

We consider now \emph{a posteriori} error bounds for the error based on the construction of an equilibrated flux. 
We construct a discrete and locally computable $H(\mathrm{div})$-conforming vector field $\sht$ that satisfies the equilibration property
\begin{equation}\label{eq:flux_equilibration}
\begin{aligned}
\p_t U_{h,\tau} + \nabla{\cdot}\sht = f_{h,\tau} &&& \text{in } \Omega\times(0,T),
\end{aligned}
\end{equation}
where $U_{h,\tau}$ is defined above, and $f_{h,\tau}$ will be further specified below.
The construction follows the approach from~\cite{ESV2017,ESV2019}, which provides a general construction that was designed to accommodate variable polynomial degrees in $hp$-FEM and also mesh modification between the time-steps. 
Since we are restricting ourselves here to the case of a single polynomial degree for all mesh elements and a single mesh for all time-steps, we shall give here a slightly simplified version of the construction from~\cite{ESV2017,ESV2019}.
The construction of the equilibrated flux is based on solving local mixed FEM problems using Raviart--Thomas--N\'ed\'elec (RTN) elements on the patches of elements surrounding each vertex of the mesh. The problem for each vertex-patch is independent from the others, so could be solved in parallel. 
For an introduction to mixed finite element methods, including RTN spaces, we refer the reader to the books~\cite{BoffiBrezziFortin2013,ErnGuermond2004}.
The final global equilibrated flux $\sht$ will then be obtained by summing the contributions from each vertex patch.

\paragraph{Local equilibration on vertex patches.}
Let $\Ver$ denote the set of vertices of the mesh~$\T$.
Let $\Ver_{\Omega}=\Ver\cap \Omega$ denote the set of interior vertices and let $\Ver_{\partial\Omega}=\Ver\cap \partial\Omega$ denote the boundary vertices.
For each vertex $\ver\in\Ver $, let $\psia$ denote the hat function associated with $\ver$. 
Note that $\{\psia\}_{\ver\in\Ver}$ forms a partition of unity of $\overline{\Omega}$, i.e.\ $\sum_{\ver\in\Ver}\psia(x)=1$ for all $x\in \overline{\Omega}$. 
Let $\Ta$ denote the set of all elements of $\T$ that contain $\ver$, and let $\oma\coloneqq \bigcup \{ K \colon K\in\Ta \}$ denote the vertex patch around $\ver$, i.e.\ the union of all elements of $\Ta$.
With the convention that elements of $\T$ are considered to be closed sets, it follows that $\oma$ is the support of $\psia$.
Let the integer $\widetilde{p}\geq p+1$ denote a fixed choice of polynomial degree that will be used for the flux reconstruction, where it is recalled that $p$ is the polynomial degree used for $\VTp$ above. 
For each $\ver\in \Ver$, let the spaces $\calP_{\widetilde{p}}(\Ta)$ and $\RTNa$ be defined by
\begin{subequations} 
\begin{align}
\calP_{\widetilde{p}}(\Ta) &\coloneqq \{ q_h \in L^2(\oma)\colon q_h|_K \in \calP_{\widetilde{p}}(K)\quad\forall K\in\Ta\},\\
\RTNa &\coloneqq \{ \bm{v}_h \in L^2(\oma;\R^\dim) \colon \bm{v}_h|_K \in \RTNK \quad\forall K\in \Ta \},
\end{align}
\end{subequations}
where $\RTNK\coloneqq \calP_{\widetilde{p}}(K;\R^\dim)+\bm{x}\calP_{\widetilde{p}}(K)$ denotes the RTN space of order $\widetilde{p}$ on $K$. In other words, $\calP_{\widetilde{p}}(\Ta)$ denotes the space of scalar functions that are piecewise polynomials of degree at most $\widetilde{p}$ with respect to $\Ta$, and $\RTNa$ denotes the space of vector fields that are piecewise RTN of order $\widetilde{p}$ with respect to $\Ta$. Note that there are no continuity conditions across mesh elements in the definition of these spaces.
We now define the local mixed finite element spaces $\Qa$ and $\Va$ by
\begin{subequations}
\begin{align}
\Qa &\coloneqq \begin{cases}
\left\{q_h \in \calP_{\widetilde{p}}(\Ta), \quad \int_\oma q_h \mathrm{d}x = 0 \right\} &\text{if }\ver\in\Ver_{\Omega} ,
\\
\calP_{\widetilde{p}}(\Ta) &\text{if }\ver\in\Ver_{\partial\Omega},
\end{cases}\label{eq:Qa_def}
\\
\Va &\coloneqq 
\begin{cases}
\left\{\bm{v}_h \in H(\Div,\oma) \cap \RTNa\, \quad \bm{v}_h\cdot \bm{n} =0 \text{ on } \partial \oma \right\} &\text{if } \ver\in \Ver_{\Omega},
\\
\left\{\bm{v}_h \in H(\Div,\oma) \cap \RTNa\, \quad \bm{v}_h\cdot \bm{n} =0 \text{ on } \partial \oma \setminus \partial \Omega \right\} &\text{if } \ver\in \Ver_{\partial\Omega},
\end{cases}
\end{align}
\end{subequations}
where $\bm{n}$ denotes the unit outward normal on $\partial \oma$.
Thus, in the case of an interior vertex $\ver\in\Ver_{\Omega}$, the space $\Va$ is the $H(\Div,\oma)$-conforming RTN space of order $\widetilde{p}$ with a constraint of vanishing normal component on the boundary $\partial \oma$. 
For a boundary vertex $\ver\in\Ver_{\partial\Omega}$, the normal component of functions in $\Va$ are constrained only on the faces of the patch-boundary $\partial\oma$ that are not part of the domain boundary $\partial\Omega$. In other words, for interior vertices, these spaces correspond to the usual RTN mixed finite element spaces for an elliptic problem over $\oma$ with a homogeneous Neumann boundary condition on $\partial \oma$, whereas for boundary vertices, these correspond to RTN mixed finite element spaces for a problem with mixed Neumann and Dirichlet conditions, with a homogeneous Neumann condition only on $\partial\oma\setminus\partial\Omega$. 
For each $\ver\in \Ver$ and each $n\in {1,\dots,N}$, define the piecewise polynoimal function $g_{h,\tau}^{\ver,n}\colon \oma \rightarrow \R$ by
\begin{equation}
g_{h,\tau}^{\ver,n} \coloneqq  \psia f_{h,\tau}|_{\oma\times I_n} - \psia \p_t \Uht|_{\oma\times I_n} - \nabla\psia{\cdot}\nabla \uht|_{\oma\times I_n}.
\end{equation}
Note in particular that the choice $\widetilde{p}\geq p+1$ is used above to ensure that the terms $\psia f_{h,\tau,n}$ and $\psia \p_t \Uht$ are piecewise polynomials of degree less than or equal to $\widetilde{p}$ with respect to the spatial mesh $\T$. Therefore $g_{h,\tau}^{a,n} \in \calP_{\widetilde{p}}(\Ta)$ for each $\ver\in\Ver$.
Note also that in the case of an interior vertex $\ver \in \Ver_{\partial\Omega}$, choosing as a test function $v_h=\psia$ in~\eqref{eq:IE_FEM_2} above implies that
\begin{equation}\label{eq:source_compatibility_condition}
\int_{\oma} g_{h,\tau}^{\ver,n} \dd x = (f_{h,\tau,n},\psia)_\Omega - (\p_t\Uht,\psia)_\Omega - (\nabla \uht,\nabla \psia)_\Omega =0,
\end{equation}
where we recall that $f_{h,\tau}|_{I_n}=f_{h,\tau,n}$ by definition.
Therefore, $g_{h,\tau}^{\ver,n}$ has zero-mean value in the case of an interior vertex $\ver$, and thus we conclude that that $g_{h,\tau}^{\ver,n}\in \Qa$ for all vertices $\ver\in \Ver$, where it is recalled that $\Qa$ was defined in~\eqref{eq:Qa_def} above.
Next, for each time-step $n\in\{1,\dots,N\}$ and each vertex $\ver\in \Ver$, we define the local flux contribution $\sh^{\ver,n} \in \Va$ by 
 \begin{equation}
\sht^{\ver,n}\coloneqq \argmin_{\substack{\bm{v}_{h,\tau}\in \Va \\ \nabla\cdot\bm{v}_{h,\tau} = g_{h,\tau}^{\ver,n}}} \norm{\bm{v}_{h,\tau}+\psia\nabla u_{h,\tau}|_{I_n}}_{\oma}.
\end{equation}
Crucially, the fact that $\sh^{\ver,n}$ is well-defined in the case of an interior vertex is a consequence of~\eqref{eq:source_compatibility_condition}.
Indeed, this corresponds to the zero mean-value compatibility condition that is required on the source term when considering a mixed finite element discretization of a Poisson equation with pure Neumann boundary conditions. The case of boundary vertices is simpler still, because it is then not necessary to require that $\int_{\oma} g_{h,\tau}^{\ver,n} \mathrm{d}x=0$ since vector fields in $\Va$ are then only constrained on part of the boundary of $\oma$.

In order to pass from the local flux contributions to a global equilibrated flux, we now extend the individual local contributions by zero to the whole domain $\Omega$. Note that boundary conditions imposed on vector fields in $\Va$ imply that the extension by zero of $\sht^{\ver,n}$ to the whole domain $\Omega$, also denoted $\sh^{\ver,n}$, is in $H(\Div,\Omega)$.
Therefore, we may define the global flux $\sht\in L^2(0,T;H(\Div,\Omega))$ as the unique piecewise constant-in-time vector field such that
\begin{equation}\label{eq:sht_def}
\sht|_{I_n} \coloneqq \sum_{\ver\in\Ver} \sht^{\ver,n} \quad \forall n\in \{1,\dots,N\}.
\end{equation}
It follows that $\sht|_{I_n} \in H(\Div,\Omega)$ for each $n$, and thus $\sht\in L^2(0,T;H(\Div,\Omega))$. We find that the divergence $\nabla\cdot\sht$ satisfies then: 
\begin{equation}
\begin{split}
\nabla\cdot\sht|_{I_n}
& = \sum_{\ver\in\Ver} \nabla\cdot\sht^{\ver,n}  =\sum_{\ver\in\Ver} g_{h,\tau}^{\ver,n}
\\ & = \sum_{\ver\in \Ver} \left[\psia f_{h,\tau}|_{I_n} - \psia \p_t \Uht|_{I_n} - \nabla \psia \cdot \nabla \uht|_{I_n} \right]
\\ &= 
\sum_{\ver\in\Ver}\psia(f_{h,\tau}-\p_t\Uht)|_{I_n} - \left(\sum_{\ver\in\Ver}\nabla\psia\right)\cdot \nabla \uht|_{I_n}
\\ & = f_{h,\tau}|_{I_n}- \p_t \Uht|_{I_n} 
\end{split}
\end{equation}
where in passing to the last line above we have used the partition of unity property which entails that $\sum_{\ver\in \Ver}\psia \equiv 1$ in $\Omega$ and $\sum_{\ver\in\Ver}\nabla \psia \equiv 0$ in $\Omega$.
Thus, on re-arranging, we have the equilibration identity~\eqref{eq:flux_equilibration} above.

\begin{remark}[Data approximation]
For the sake of simplicity, we have considered here some abstract approximation $f_{h,\tau}$ that approximates $f$ without entering into too much detail. A more precise treatment can be found in~\cite{ESV2017}, which handles variable polynomials degrees between mesh elements, and gives a more specific construction of a suitable approximation of the right-hand side.
\end{remark}
  
\subsection{Error bound for the $L^2(H^1)\cap H^1(H^{-1})$ norm error}

Given the equilibrated flux~$\sht$ defined in~\eqref{eq:sht_def}, we now consider a posteriori error estimators for the $L^2(H^1)\cap H^1(H^{-1})$ norm of the error.

\begin{theorem}[Upper bound]\label{thm:upper_bound_Y_norm}
Let $\sht \in L^2(0,T;H(\Div,\Omega))$ be defined by~\eqref{eq:sht_def}. Then,
\begin{equation}\label{eq:Y_norm_upper_bound}
\norm{u-\Uht}_Y \leq \left(\int_0^T\left(\norm{\sht + \nabla \Uht}_{\Omega}+\norm{f-f_{h,\tau}}_{H^{-1}(\Omega)}\right)^2\mathrm{d}t + \norm{u_0-u_{h,\tau,0}}_\Omega^2 \right)^{\frac{1}{2}}.
\end{equation}
\end{theorem}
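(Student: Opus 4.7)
The plan is to mirror the proof of Theorem~\ref{thm:apost_semidisc_Y_norm} from the semi-discrete setting, with the crucial new ingredient being the equilibrated flux $\sht$ that replaces the role played by $u_\tau$ in the reconstruction argument. The starting point is the same error-residual equivalence coming from Theorem~\ref{thm:inf_sup_parabolic_Y}: defining the residual $\calR_Y$ exactly as in~\eqref{eq:R_Y_def}, and noting that $\Uht(0)=u_{h,\tau,0}$ by~\eqref{eq:fd_affine_recons}, I obtain
\begin{equation*}
\norm{u-\Uht}_Y^2 = \norm{\calR_Y(\Uht)}_{X^*}^2 + \norm{u_0-u_{h,\tau,0}}_\Omega^2.
\end{equation*}
Thus everything reduces to bounding $\norm{\calR_Y(\Uht)}_{X^*}$ by the integrated term on the right-hand side of~\eqref{eq:Y_norm_upper_bound}.

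Next, for an arbitrary $v\in X$, I expand $\pair{\calR_Y(\Uht)}{v}_{X^*\times X}=\By(u-\Uht,v)$ and exploit the equilibration property~\eqref{eq:flux_equilibration}. Precisely, since $\sht(t)\in\Hdivo$ and $v(t)\in H^1_0(\Omega)$ for a.e.\ $t$, integration by parts in space gives $\pair{\nabla{\cdot}\sht}{v}_{H^{-1}\times H^1_0}=-(\sht,\nabla v)_\Omega$, so~\eqref{eq:flux_equilibration} yields
\begin{equation*}
\pair{\p_t \Uht}{v} = \pair{f_{h,\tau}}{v} + (\sht,\nabla v)_\Omega \quad \text{for a.e.\ } t\in(0,T).
\end{equation*}
Substituting this into $\pair{\calR_Y(\Uht)}{v}=\int_0^T\bigl[\pair{f}{v}-\pair{\p_t\Uht}{v}-(\nabla\Uht,\nabla v)_\Omega\bigr]\dd t$ produces the key identity
\begin{equation*}
\pair{\calR_Y(\Uht)}{v}_{X^*\times X}
= \int_0^T \Bigl[\,\pair{f-f_{h,\tau}}{v} \;-\; (\sht+\nabla\Uht,\nabla v)_\Omega\,\Bigr]\dd t,
\end{equation*}
which is the fully discrete analogue of~\eqref{eq:residual_computation}; the role previously played by $(\nabla(u_\tau-\Uht),\nabla v)_\Omega$ is now played by the equilibrated-flux discrepancy $-(\sht+\nabla\Uht,\nabla v)_\Omega$.

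From here the argument is purely a double application of Cauchy--Schwarz: pointwise in time via the definition~\eqref{eq:dual_norm} of the $H^{-1}(\Omega)$ norm, followed by Cauchy--Schwarz in time against $\norm{\nabla v}_\Omega$, yields
\begin{equation*}
\abs{\pair{\calR_Y(\Uht)}{v}_{X^*\times X}} \leq \left(\int_0^T \bigl(\norm{\sht+\nabla\Uht}_\Omega+\norm{f-f_{h,\tau}}_{H^{-1}(\Omega)}\bigr)^2\dd t\right)^{\frac{1}{2}}\norm{v}_X,
\end{equation*}
so that $\norm{\calR_Y(\Uht)}_{X^*}$ is bounded by the parenthesized quantity. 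Combining with the residual-error identity above gives exactly~\eqref{eq:Y_norm_upper_bound}. I do not anticipate a genuine obstacle in this argument, since all the hard work has been done upstream: the existence and $H(\Div,\Omega)$-conformity of $\sht$, and especially the exact equilibration identity~\eqref{eq:flux_equilibration}, are what make the spatial residual collapse into the single local term $\norm{\sht+\nabla\Uht}_\Omega$. The only point to check carefully is that $\sht$ assembled from the patch contributions by extension-by-zero really is globally in $\Hdivo$ and satisfies the pointwise-in-time equilibration, which is precisely what was verified in the construction section preceding the theorem.
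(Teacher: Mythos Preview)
Your proposal is correct and follows essentially the same approach as the paper's proof: both start from the error--residual identity of Theorem~\ref{thm:inf_sup_parabolic_Y}, use the equilibration identity~\eqref{eq:flux_equilibration} together with integration by parts to rewrite the residual as $\int_0^T[\pair{f-f_{h,\tau}}{v}-(\sht+\nabla\Uht,\nabla v)_\Omega]\dd t$, and then apply Cauchy--Schwarz.
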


\begin{proof}
Theorem~\ref{thm:inf_sup_parabolic_Y} shows that
\begin{equation}\label{eq:Y_norm_upper_bound_1}
\norm{u-\Uht}_Y^2 = \norm{\mathcal{R}_Y(\Uht)}_{X^*}^2 + \norm{u_0-u_{h,\tau,0}}_\Omega^2,
\end{equation}
where $\mathcal{R}(\Uht)\in X^*$ was defined in~\eqref{eq:R_Y_def}, and where we note that $u(0)=u_0$ and $\Uht(0)=u_{h,\tau,0}$.
Using the equilibration identity~\eqref{eq:flux_equilibration}, we find that
\begin{equation}
\begin{split}
\langle \mathcal{R}_Y(\Uht),v\rangle & = \int_0^T \left[\pair{f}{v} - (\p_t\Uht,v)_\Omega - (\nabla\Uht,\nabla v)_\Omega\right]\mathrm{d}t
\\ &= \int_0^T \left[\pair{f-f_{h,\tau}}{v} + (f_{h,\tau} - \p_t \Uht,v)_\Omega - (\nabla\Uht,\nabla v)_\Omega\right]\mathrm{d}t
\\ &= \int_0^T \left[\pair{f-f_{h,\tau}}{v} + (\nabla\cdot \sht,v)_\Omega - (\nabla\Uht,\nabla v)_\Omega\right]\mathrm{d}t
\\ & = \int_0^T \left[\pair{f-f_{h,\tau}}{v}  - (\sht+\nabla\Uht,\nabla v)_\Omega\right]\mathrm{d}t,
\end{split}
\end{equation}
for all $v\in X$, where in passing to the last equality above we have used the identity $(\nabla\cdot\sht,v)_\Omega = - (\sht,\nabla v)_\Omega$ for all $v\in X$, a.e. in $(0,T)$.
Therefore, the Cauchy--Schwarz inequality and the definition of $\norm{\cdot}_{H^{-1}(\Omega)}$ imply that
\[
\abs{\langle \mathcal{R}_Y(\Uht),v\rangle}  \leq \int_0^T \left(\norm{f-f_{h,\tau}}_{H^{-1}(\Omega)}+\norm{\sht + \nabla \Uht}_\Omega\right) \norm{\nabla v}_\Omega \mathrm{d}t \quad \forall v\in X,
\]
and thus the Cauchy--Schwarz inequality implies that
\begin{equation}\label{eq:Y_norm_upper_bound_2}
\norm{\mathcal{R}_Y(\Uht)}_{X^*}^2 \leq \int_0^T \left(\norm{f-f_{h,\tau}}_{H^{-1}(\Omega)}+\norm{\sht + \nabla \Uht}_\Omega\right)^2 \mathrm{d}t.
\end{equation}
Combining \eqref{eq:Y_norm_upper_bound_1} and~\eqref{eq:Y_norm_upper_bound_2} then gives~\eqref{eq:Y_norm_upper_bound}.
\end{proof}

\begin{remark}[Comparison to estimators for semi-discrete approximations]
Notice that in the limit $h\tends 0$ where the spatial mesh is refined but the time-step $\tau$ is held fixed, we can expect that $\sht$ should approach $-\nabla \uht$. This can be justified at least heuristically by comparing \eqref{eq:semidiscrete_apost_identity} in the semi-discrete setting with the flux equilibration identity~\eqref{eq:flux_equilibration}.
Therefore, up to data oscillation, we can expect heuristically that the main error estimator term $\norm{\sht+\nabla\Uht}_{L^2((0,T)\times \Omega)}$ should approach the jump estimator $\eta_J$ from~\eqref{eq:jump_estimator_semidiscrete} in the limit of fine spatial meshes. 
\end{remark}

\begin{remark}[Bibliographical remarks]
Theorem~\ref{thm:upper_bound_Y_norm} presents a simplified form of the \emph{a posteriori} upper bound of~\cite[Corollary~5.3]{ESV2017}, which handled the more general case of varying polynomial-degrees between mesh elements, and varying meshes between time-steps. The result there is also more specific in its treatment of the data oscillation terms. 
\end{remark}

It is shown in~\cite[Corollary~5.3]{ESV2017} that the flux estimator $\eta_F$ is also efficient with respect to the $Y$-norm error of $u-\Uht$, at least \emph{locally} in time but only \emph{globally} in space, i.e.\ there is a bound of the form
\begin{equation}
\int_{I_n} \norm{\sht+\nabla \Uht}_\Omega^2 \mathrm{d}t \lesssim \int_{I_n} \left(\norm{\p_t(u-\Uht)}_{H^{-1}(\Omega)}^2+\norm{\nabla(u-\Uht)}_{\Omega}^2\right)\mathrm{d}t + \text{oscillation},
\end{equation}
for each $n\in\{1,\dots,N\}$, where we refer the reader to~\cite[Eqn.~(5.16)]{ESV2017} for the details on the oscillation term. However, a fully local efficiency bound for the $Y$-norm error $\norm{u-\Uht}_Y$, both with respect to space and time, is not known.
The same issue occurs also for residual estimators when considering the same error, see~\cite{Verfurth2003}. Thus, this issue appears to be related to the effect of temporal discretization and choice of norm for the error (see below), rather than any specific choice of construction of the \emph{a posteriori} error estimators.

\subsection{Error bound for an extended $L^2(H^1)\cap H^1(H^{-1})$-norm of the error}
The lack of local-in-space efficiency results for \emph{a posteriori} error estimators (both of residual-type or equilibrated flux-type) for the error quantity $\norm{u-\Uht}_Y$ motivates a change of viewpoint.
In place of considering $\Uht$ as the primary numerical solution, we can consider instead $\uht \in \Vhtp$ as the central object of interest. 
However, $\uht$ is generally not an element of $Y$, and thus one cannot evaluate directly the $Y$-norm of $u-\uht$. 
This motivates the definition of an extension of the norm on $Y$ to the vector sum\footnote{Recall that the sum space $Y+\Vhtp$ consists of all functions of the form $\varphi+v_{h,\tau}$ for $\varphi\in Y$ and $v_{h,\tau}\in \Vhtp$, and $Y+\Vhtp$ is a subspace of $X$. The sum is however not a direct sum.} space $Y+\Vhtp$, i.e.\ we define below a norm $\norm{\cdot}_{\mathcal{E}_Y}\colon Y+\Vhtp \tends \mathbb{R}_{\geq 0}$ such that $\norm{\varphi}_{\mathcal{E}_Y}=\norm{\varphi}_Y$ for all $\varphi\in Y$.
Recall that $Y$ is continuously embedded in $C([0,T];L^2(\Omega))$, and thus point values for all times $t\in[0,T]$ of functions in $Y$ can be understood in the trace sense.
We also recall that functions in $\Vhtp$ are left-continuous by definition.

Let $\mathcal{I}_{h,\tau}\colon Y+\Vhtp \tends Y $ be the linear operator defined by
\begin{equation}\label{eq:general_definition}
\mathcal{I}_{h,\tau} v(t)|_{I_n} \coloneqq v(t) + \frac{t_n-t}{\tau_n}\left(v(t_{n-1}^-)-v(t_{n-1}^+)\right) \qquad\forall n\in \{1,\dots,N\},
\end{equation}
where $v(t_{n-1}^-)=\lim_{\epsilon \searrow 0}v(t_n-\epsilon)$ and $v(t_{n-1}^+)=\lim_{\epsilon\searrow 0}v(t_{n-1}+\epsilon)$ are one-sided traces of $v$ at $t_{n-1}$, where we adopt the convention $v(t_0^-)=v(0)$.
To verify that $\mathcal{I}_{h,\tau}$ indeed maps $Y+\Vhtp$ to $Y$, observe firstly that the continuous embedding $Y\hookrightarrow C([0,T];L^2(\Omega))$ implies that $\varphi(t_{n-1}^+)=\varphi(t_{n-1}^-)$ for all $n\in\{1,\dots,N\}$, and therefore $\mathcal{I}_{h,\tau} \varphi = \varphi$ for any $\varphi\in Y$. 
Secondly, for a function $v\in \Vhtp$ and $t\in (t_{n-1},t_n]$, we have $v(t)=v(t_{n-1}^+)=v(t_n)$ and $v(t_{n-1}^-)=v(t_{n-1})$ since $v$ is piecewise constant in time and left-continuous, therefore
\begin{equation}\label{eq:interpolation_operator}
\mathcal{I}_{h,\tau} v (t) = \frac{t-t_{n-1}}{\tau_n} v(t_n)+\frac{t_n-t}{\tau_n} v(t_{n-1}) \quad \forall t\in (t_{n-1},t_n].
\end{equation}
Moreover $\mathcal{I}_{h,\tau} v(0)=v(0)$. This shows that $\mathcal{I}_{h,\tau}v $ is continuous, piecewise affine, with values in $\VTp$ for each $t\in[0,T]$, and thus belongs to the space $H^1(0,T;H^1_0(\Omega))\hookrightarrow Y$. This demonstrates that $\mathcal{I}_{h,\tau}\colon Y+\Vhtp\rightarrow Y$.
In particular, it follows from~\eqref{eq:fd_affine_recons} and \eqref{eq:interpolation_operator} above that
\begin{equation}\label{eq:interpolant_reconst}
\Uht = \mathcal{I}_{h,\tau} \uht.
\end{equation}

We now define the norm $\norm{\cdot}_{\mathcal{E}_Y}\coloneqq Y+\Vhtp\rightarrow \mathbb{R}_{\geq 0}$ by
\begin{equation}
\norm{w}_{\mathcal{E}_Y}^2 \coloneqq \norm{\calI_{h,\tau} w}_Y^2 + \norm{w - \calI_{h,\tau} w}_X^2 \quad\forall w\in Y+\Vhtp.
\end{equation}
Observe that $\norm{\cdot}_{\mathcal{E}_Y}$ satisfies the triangle inequality since $\mathcal{I}_{h,\tau}$ is linear.
Furthermore, $\norm{w}_{\mathcal{E}_Y}=0$ implies $w=\mathcal{I}_{h,\tau} w$ since $\norm{w-\mathcal{I}_{h,\tau} w}_X=0$ and thus $w=0$ since $\norm{w}_Y=\norm{\mathcal{I}_{h,\tau}w}_Y=0$.
Therefore $\norm{\cdot}_{\mathcal{E}_Y}$ is indeed a norm on $Y+\Vhtp$.
Additionally, $\norm{\varphi}_{\mathcal{E}_Y}=\norm{\varphi}_Y$ for all $\varphi\in Y$ since then $\varphi=\mathcal{I}_{h,\tau} \varphi$ as shown above.

In particular, observe that for the error $u-\uht$, we have from~\eqref{eq:interpolant_reconst} above that
\begin{equation}\label{eq:EY_norm_solution}
\norm{u-\uht}_{\mathcal{E}_Y}^2 = \norm{u - \Uht}_Y^2 + \norm{\uht - \Uht}_{X}^2.
\end{equation}
Thus the norm $\norm{u-\uht}_{\mathcal{E}_Y}$ includes both the $Y$-norm error $\norm{u-\Uht}_Y$ considered above as well as the the term $\norm{\uht - \Uht}_{X}$ which measures the lack of conformity of $\uht\not\in Y$. The quantity $\norm{\uht - \Uht}_{X}$ is also closely related to the jump estimator $\eta_J$ appearing in the semi-discrete setting, c.f. Section~\ref{sec:Y-norm-bounds} above.
Crucially, however, it is shown in~\cite[Theorem~5.1]{ESV2017} that we have the equivalence
\begin{equation}\label{eq:extended_norm_equiv}
\norm{u-\Uht}_Y \leq \norm{u-\uht}_{\mathcal{E}_Y} \leq 3 \norm{u-\Uht}_Y,
\end{equation}
see in particular~\cite[Eqn.~(5.8)]{ESV2017} and note that the coarsening error term that appears there vanishes in the current context where we consider only the case of a fixed spatial mesh $\T$ over all time-steps. Thus the norm $\norm{u-\uht}_{\mathcal{E}_Y}$ defines a notion of error that is globally equivalent to $\norm{u-\Uht}_Y$, yet  may have different localization, especially in terms of localization across the spatial domain. This enables \emph{a posteriori} error estimators that are then locally efficient both with respect to time and to space.

\begin{theorem}\label{thm:EY_norm_bound}
Let $\sht\in L^2(0,T;H(\Div,\Omega))$ be defined by~\eqref{eq:sht_def}. Then, we have the global upper bound
\begin{multline}\label{eq:EY_norm_upper_bound}
\norm{u-\uht}_{\mathcal{E}_Y} \leq \left( \int_0^T\left(\norm{\sht+\nabla\Uht}_\Omega+\norm{f-f_{h,\tau}}_{H^{-1}(\Omega)}\right)^2 \mathrm{d}t \right. \\ + \left. \int_0^T\norm{\nabla(\uht-\Uht)}_{\Omega}^2\mathrm{d}t + \norm{u_0-u_{h,\tau,0}}_\Omega^2 \right)^{\frac{1}{2}}.
\end{multline}
Furthermore, if $1\leq \dim \leq 3$, then for each $n\in\{1,\dots,N\}$ and each element $K\in\T$, we have the local lower bound
\begin{equation}\label{eq:EY_local_lower_bound}
\int_{I_n}\left(\norm{\sht+\nabla \Uht}_K^2+\norm{\nabla(\uht-\Uht) }_{K}^2\right)\mathrm{d}t
\lesssim \sum_{\ver\in \VK}\left(\abs{u-\uht}_{\mathcal{E}_Y^{\ver,n}}^2+\left[\eta_{\mathrm{osc}}^{\ver,n}\right]^2\right),
\end{equation}
where $\VK$ denotes the set of vertices of the element $K$, and
\begin{subequations}
\begin{gather}
\abs{u-\uht}_{\mathcal{E}_Y^{\ver,n}}^2 \coloneqq \int_{I_n} \left(\norm{\p_t(u-\Uht)}_{H^{-1}(\oma)}^2+\norm{\nabla(u-\Uht)}_\oma^2+\norm{\nabla(\uht-\Uht)}_\oma^2\right)\mathrm{d}t,
\\
\left[\eta_{\mathrm{osc}}^{\ver,n}\right]^2 \coloneqq \int_{I_n} \norm{f-f_{h,\tau}}_{H^{-1}(\oma)}^2\mathrm{d}t.\label{eq:local_data_osc}
\end{gather}
\end{subequations}
Moreover, we have the global lower bound
\begin{equation}\label{eq:EY_global_lower_bound}
\int_0^T \left(\norm{\sht+\nabla \Uht}_\Omega^2+\norm{\nabla(\uht-\Uht) }_{\Omega}^2\right)\mathrm{d}t \leq \norm{u-\uht}_{\mathcal{E}_Y}^2 + \sum_{n=1}^N\sum_{\ver\in\Ver}\left[\eta_{\mathrm{osc}}^{\ver,n}\right]^2.
\end{equation}
The hidden constants in~\eqref{eq:EY_local_lower_bound} and \eqref{eq:EY_global_lower_bound} depend only on the dimension $\dim$ of $\Omega$ and on the shape-regularity parameter of $\T$.
\end{theorem}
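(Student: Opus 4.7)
The plan is to treat the three assertions in turn, leveraging the decomposition of the $\mathcal{E}_Y$-norm given by~\eqref{eq:EY_norm_solution} together with the vertex-patch structure of the flux reconstruction.

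For the upper bound~\eqref{eq:EY_norm_upper_bound}, the strategy is almost immediate from what we have already proved. By~\eqref{eq:EY_norm_solution}, we have $\norm{u-\uht}_{\mathcal{E}_Y}^2=\norm{u-\Uht}_Y^2+\norm{\uht-\Uht}_X^2$. Theorem~\ref{thm:upper_bound_Y_norm} bounds $\norm{u-\Uht}_Y$ by the first and third terms on the right-hand side of~\eqref{eq:EY_norm_upper_bound}, while the middle term on the right-hand side of~\eqref{eq:EY_norm_upper_bound} is literally $\norm{\uht-\Uht}_X^2$. Adding the two bounds and taking a square root yields~\eqref{eq:EY_norm_upper_bound}.

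For the local lower bound~\eqref{eq:EY_local_lower_bound}, the approach is patch-based. Fix an element $K\in\T$ and a time interval $I_n$. Since $\sht|_{I_n}=\sum_{\ver\in\Ver}\sht^{\ver,n}$ and at most $d+1$ patch contributions are nonzero on $K$, the triangle inequality and the partition-of-unity identity $\sum_{\ver\in\VK}\psia\equiv 1$ on $K$ give
\begin{equation*}
\norm{\sht+\nabla\Uht}_K \;\le\; \sum_{\ver\in\VK}\norm{\sht^{\ver,n}+\psia\nabla\uht|_{I_n}}_{\oma} + \sum_{\ver\in\VK}\norm{\psia\nabla(\uht-\Uht)}_{K}.
\end{equation*}
For the first sum, the key ingredient is the $p$-robust stability of the local constrained minimization defining $\sht^{\ver,n}$ (the Braess--Pfefferer--Sch\"obler type bound used in~\cite{ESV2017,ESV2019}), which yields
\begin{equation*}
\norm{\sht^{\ver,n}+\psia\nabla\uht|_{I_n}}_{\oma} \;\lesssim\; \sup_{v\in\Hstar\setminus\{0\}}\frac{\int_{\oma}\bigl(g_{h,\tau}^{\ver,n}\,v-\psia\nabla\uht|_{I_n}\cdot\nabla v\bigr)\,\mathrm{d}x}{\norm{\nabla v}_{\oma}},
\end{equation*}
where $\Hstar$ is the appropriate space of $H^1$ functions on $\oma$ (with zero mean for interior vertices, or homogeneous Dirichlet trace on $\p\oma\cap\p\Omega$ for boundary vertices). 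Substituting the definition of $g_{h,\tau}^{\ver,n}$, integrating the term $\psia\p_t\Uht\, v$ together with $\psia\nabla\uht\cdot\nabla v+\nabla\psia{\cdot}\nabla\uht\, v$ (which equals $\nabla\uht{\cdot}\nabla(\psia v)$), and adding/subtracting $f$ and $\Uht$ allows us to rewrite the numerator as the local residual of $u-\Uht$ tested against $\psia v\in H^1_0(\oma)$, plus a local data oscillation contribution and a correction involving $\nabla(\uht-\Uht)$. Since $\psia$ is bounded and has bounded gradient (up to shape-regularity), one obtains after integration in time
\begin{equation*}
\int_{I_n}\norm{\sht^{\ver,n}+\psia\nabla\uht|_{I_n}}_{\oma}^2\mathrm{d}t \;\lesssim\; \abs{u-\uht}_{\mathcal{E}_Y^{\ver,n}}^2 + \bigl[\eta_{\mathrm{osc}}^{\ver,n}\bigr]^2.
\end{equation*}
The second sum is handled separately: since $\uht-\Uht=\tfrac{t_n-t}{\tau_n}(u_{h,\tau,n}-u_{h,\tau,n-1})$ on $I_n$, the term $\norm{\nabla(\uht-\Uht)}_K^2$ in time is already absorbed into the local norm $\abs{u-\uht}_{\mathcal{E}_Y^{\ver,n}}^2$ by construction. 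Combining gives~\eqref{eq:EY_local_lower_bound}.

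For the global lower bound~\eqref{eq:EY_global_lower_bound}, one sums~\eqref{eq:EY_local_lower_bound} over $K\in\T$ and $n\in\{1,\dots,N\}$, uses the bounded overlap of the vertex patches (depending only on $\theta_\T$) to recombine the $H^{-1}(\oma)$ contributions into a global $H^{-1}(\Omega)$ bound, and then invokes the identity $\norm{u-\uht}_{\mathcal{E}_Y}^2=\norm{u-\Uht}_Y^2+\norm{\uht-\Uht}_X^2$ to reassemble the left-hand side of~\eqref{eq:EY_global_lower_bound} into $\norm{u-\uht}_{\mathcal{E}_Y}^2$ plus the global oscillation. The main obstacle is the $p$-robust local stability bound for $\sht^{\ver,n}$ together with the careful rewriting of the numerator into a pure residual of $u-\Uht$; once that is in hand, the rest follows by routine patch-overlap counting arguments. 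Our treatment is a simplification of~\cite[Theorem~5.1]{ESV2017}, which also handles variable polynomial degrees and mesh modification.
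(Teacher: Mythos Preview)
Your proposal is correct and follows essentially the same route as the paper's proof: the upper bound via~\eqref{eq:EY_norm_solution} combined with Theorem~\ref{thm:upper_bound_Y_norm}, the local lower bound via the patch decomposition of $\sht$ together with the $p$-robust stability of the local flux minimization (Braess--Pillwein--Sch\"oberl in $\dim=2$, Ern--Vohral\'ik in $\dim=3$) and the product-rule rewriting of the numerator as a local residual tested against $\psia v$, and the global lower bound via summation. The only cosmetic difference is that the paper first passes from $\sht+\nabla\Uht$ to $\sht+\nabla\uht$ by the triangle inequality before decomposing into patch contributions, whereas you decompose $\sht+\nabla\Uht$ directly and carry the $\psia\nabla(\uht-\Uht)$ correction separately; both arrive at the same bound. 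One small point worth tightening in your global step: ``bounded overlap'' alone does not immediately control $\sum_{\ver}\norm{\p_t(u-\Uht)}_{H^{-1}(\oma)}^2$ by $\norm{\p_t(u-\Uht)}_{H^{-1}(\Omega)}^2$; the clean argument is to lift $\p_t(u-\Uht)$ to its global Riesz representative $w\in H^1_0(\Omega)$, use $\norm{\cdot}_{H^{-1}(\oma)}\le\norm{\nabla w}_{\oma}$, and then invoke the finite overlap of the patches on $\norm{\nabla w}_{\oma}^2$.
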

\begin{proof}
Since the full proof essentially follows~\cite[Theorem~5.2]{ESV2017} with only very minor simplifications, we shall only outline the main steps here. 
The upper bound~\eqref{eq:EY_norm_upper_bound} is an immediate consequence of \eqref{eq:Y_norm_upper_bound} and \eqref{eq:EY_norm_solution} above. 
To show the local lower bound~\eqref{eq:EY_local_lower_bound}, we start by noting that for each element $K\in\T$ and each vertex $\ver\in \Ver$, $\stha$ vanishes on $K$ unless $\ver\in \Ver_K$ is a vertex of $K$. Therefore, we have $(\sth+\nabla \uht)|_{K\times I_n}= \sum_{\ver\in\Ver_K}\left(\stha+\psia \nabla\uht\right)|_{K\times I_n}$. 
Therefore, we have
\begin{multline}
\int_{I_n}\left(\norm{\sth + \nabla \Uht}^2_K+\norm{\nabla(\uht-\Uht)}_K^2\right)\mathrm{d}t
\\ \lesssim 
\int_{I_n}\left(\norm{\sth + \nabla \uht}^2_K+\norm{\nabla(\uht-\Uht)}_K^2\right)\mathrm{d}t
 \\  \lesssim \sum_{\ver\in\Ver_K}\int_{I_n}\norm{\stha +\psia \nabla \uht}_{K}^2\mathrm{d}t+\int_{I_n}\norm{\nabla(\uht-\Uht)}_K^2\mathrm{d}t.
\end{multline}
To bound the first term on the right-hand side of the inequality above, we use the following stability result for local flux equilibration, due to \cite{BraessPillweinSchoberl2009} for the case $\dim=2$ and \cite{ErnVohralik2020} for $\dim=3$, which ultimately rests on the analytical results in~\cite{CostabelMcIntosh2010}, which shows that
\begin{equation}
\norm{\stha + \psia \nabla \uht}_{\oma} \lesssim \sup_{v\in H^1_*(\oma)\setminus\{0\}} \frac{(g_{h,\tau}^{\ver,n},v)_\oma - (\psia \nabla \uht,\nabla v)_\oma}{\norm{\nabla v}_{\oma}},
\end{equation}
where the space $H^1_*(\oma)\coloneqq \{v\in H^1(\oma),\;\int_\oma v\mathrm{d}x =0\}$ in the case of an interior vertex $\ver$, and $H^1_*(\oma)\coloneqq \{v\in H^1(\oma)\; v|_{\partial \Omega}=0\}$ in the case of a boundary vertex $\ver$.
Observe that the definition of $g_{h,\tau}^{\ver,n}$ above and the product rule $\nabla(\psia v)=v \nabla \psia + \psia\nabla v$ imply that
\begin{equation}
    (g_{h,\tau}^{\ver,n},v)_\oma - (\psia \nabla \uht,\nabla v)_\oma = (f_{h,\tau},\psia v)_\oma - (\p_t \Uht,\psia v)_{\oma} - (\nabla \uht,\nabla (\psia v))_\oma,
\end{equation}
where we furthermore notice that $\widetilde{v}=\psia v \in H^1_0(\oma)$ for either case wher $\ver$ is an interior vertex or a boundary vertex. Moreover, $\norm{\nabla (\psia v)}_{\oma}\lesssim \norm{\nabla v}_{\oma}$ for all $v\in H^1_*(\oma)$ by the Poincar\'e inequality and the inverse inequality $\norm{\nabla \psia}_{L^\infty(\oma)}\lesssim (\diam \oma)^{-1}$ by shape-regularity of $\T$.
Using the fact that $g_{h,\tau}^{\ver,n}$ and $\psi\nabla \uht$ are piecewise constant with respect to time, it is then easy to show that (c.f.\ \cite[Lemma~8.2]{ESV2017}), for any $K\in\T$ and any $\ver\in \Ver_K$,
\begin{multline}\label{eq:local_stability_main}
\left(\int_{I_n} \norm{\stha + \psia \nabla \uht}_{K}^2\mathrm{d}t \right)^{\frac{1}{2}}\leq \left(\int_{I_n} \norm{\stha + \psia \nabla \uht}_{\oma}^2\mathrm{d}t \right)^{\frac{1}{2}}
\\
\lesssim \sup_{\widetilde{v}\in \mathcal{P}_0(I_n;H^1_0(\oma))\setminus\{0\} } \frac{\int_{I_n}\left[\pair{f_{h,\tau}-f}{\widetilde{v}}_\oma+\pair{\p_t(u-\Uht)}{\widetilde{v}}_\oma+(\nabla(u-\uht),\nabla \widetilde{v})_\oma \right]\mathrm{d}t}{\left(\int_{I_n}\norm{\nabla \widetilde{v}}_\oma^2\mathrm{d}t\right)^{\frac{1}{2}}},
\end{multline}
where we have used the abbreviated notation $\pair{\cdot}{\cdot}_\oma$ to denote the duality pairing between $H^{-1}(\oma)$ and $H^1_0(\oma)$. 
The local lower bound~\eqref{eq:EY_local_lower_bound} is then obtained by applying the Cauchy--Schwarz and triangle inequalities to the terms on the right-hand side in~\eqref{eq:local_stability_main}.
The global lower bound~\eqref{eq:EY_global_lower_bound} is then deduced from~\eqref{eq:EY_local_lower_bound} by summation over all elements of the mesh.
\end{proof}

We also emphasize that the hidden constants in~\eqref{eq:EY_local_lower_bound} and \eqref{eq:EY_global_lower_bound} are, among other things, independent of the polynomial degrees $p$ and $\widetilde{p}$.

\subsection{Error bounds for the $L^2(H^1)$ norm error}

We can also consider \emph{a posteriori} error bounds for the $X$-norm errors $\norm{u-\uht}_X$ and $\norm{u-\Uht}_X$. We leave it as an exercise to the reader (or alternatively, see \cite[Theorem~5.1]{ESV2019}) to show that
\begin{equation}
\norm{u-\uht}_X \leq \left(\int_0^T\left( \norm{\sht+\nabla \uht}_\Omega^2 + \norm{\nabla(\uht-\Uht)}_{\Omega}^2\right)\mathrm{d}t\right)^{\frac{1}{2}}+\text{oscillation},
\end{equation}
for some appropriate oscillation terms in terms of $f-f_{h,\tau}$ and $u_0-u_{h,\tau,0}$. A similar bound for $\norm{u-\Uht}_X$ also holds.
Thus, up to constants and oscillation terms, the error estimator is the same as the one appearing in Theorem~\ref{thm:EY_norm_bound} above.
We note from the onset that, as explained in Section~\ref{sec:X-norm-bounds} above, we cannot expect in general to have efficiency of the estimator $\norm{\uht-\Uht}_X$ relative to the $X$-norm of the error, as this estimator corresponds to the temporal jumps of the numerical solution $\uht$.
Nevertheless, it is shown in~\cite{ESV2019} that if the mesh and time-step sizes are related by $h^2 \lesssim \tau$, then the flux estimator is bounded by the $X$-norm error plus \emph{the jump estimator}. 
Recall that the local data oscillation term $\eta_{\mathrm{osc}}^{\ver,n}$ is defined in~\eqref{eq:local_data_osc} above.

\begin{theorem}[\cite{ESV2019}]\label{thm:X_norm_lower_bounds}
For each $\ver\in\Ver$, let $h_{\oma}$ denote the diameter of the patch $\oma$.
Suppose that $1\leq \dim \leq 3$, and that there exists a constant $\gamma>0$ such that $h_{\oma}^2\leq \gamma \tau_n$ for every $\ver\in\Ver$ and every $n\in\{1,\dots,N\}$.
Then, for each $K\in\T$ and each $n\in\{1,\dots,N\}$, we have
\begin{equation}\label{eq:X_norm_local_lower_bound}
\int_{I_n}\norm{\sth+\nabla\uht}_K^2\mathrm{d}t \lesssim \sum_{\ver\in\Ver_K} \left[\int_{I_n}\left(\norm{\nabla(u-\uht)}_{\oma}^2+\norm{\nabla(\uht-\Uht)}_{\oma}^2\right)\mathrm{d}t+\left[\eta_{\mathrm{osc}}^{\ver,n}\right]^2\right],
\end{equation}
and also
\begin{equation}\label{eq:X_norm_global_lower_bound}
\int_0^T\norm{\sth+\nabla \uht}_{\Omega}^2\mathrm{d}t \lesssim \norm{u-\uht}_X^2+\norm{\uht-\Uht}_X^2 + \sum_{n=1}^N\sum_{\ver\in\Ver}\left[\eta_{\mathrm{osc}}^{\ver,n}\right]^2,
\end{equation}
where the hidden constant depends only on the shape-regularity of $\T$, the dimension $\dim$ and on $\gamma$. 
\end{theorem}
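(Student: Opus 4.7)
The plan is to adapt the local efficiency analysis for the equilibrated flux from the proof of Theorem~\ref{thm:EY_norm_bound} so that, under the parabolic CFL condition $h_\oma^2\leq\gamma\tau_n$, the contribution of the temporal residual $\pair{\p_t(u-\Uht)}{\cdot}$ is controlled by $L^2(H^1)$-type norms of $u-\uht$ and $\uht-\Uht$ together with data oscillation, rather than by $\int_{I_n}\norm{\p_t(u-\Uht)}_{H^{-1}(\oma)}^2\,\dd t$ as in~\eqref{eq:EY_local_lower_bound}.

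First I would start from the local flux-equilibration stability estimate~\eqref{eq:local_stability_main}, whose numerator couples the data oscillation $\pair{f-f_{h,\tau}}{\widetilde{v}}$, the spatial residual $(\nabla(u-\uht),\nabla\widetilde{v})_\oma$, and the problematic temporal pairing $\int_{I_n}\pair{\p_t(u-\Uht)}{\widetilde{v}}\,\dd t$, over piecewise-constant-in-time test functions $\widetilde{v}\equiv w\in H^1_0(\oma)$. To reshape the temporal contribution, I would split $w=\pi_h w+(w-\pi_h w)$, where $\pi_h\colon H^1_0(\oma)\to \VTp\cap H^1_0(\oma)$ is a Scott--Zhang or Cl\'ement quasi-interpolant with the standard bounds $\norm{\nabla \pi_h w}_\oma\lesssim\norm{\nabla w}_\oma$ and $\norm{w-\pi_h w}_\oma\lesssim h_\oma\norm{\nabla w}_\oma$.

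For the FEM piece $\pi_h w$, subtracting the discrete equation~\eqref{eq:IE_FEM_2} tested with $\pi_h w$ (extended by zero to $\Omega$) from the continuous PDE tested with the same function yields
\[
\int_{I_n}\pair{\p_t(u-\Uht)}{\pi_h w}\,\dd t = \int_{I_n}\left[\pair{f-f_{h,\tau}}{\pi_h w} - (\nabla(u-\uht),\nabla\pi_h w)_\oma\right]\dd t,
\]
in which all $\p_t\Uht$ contributions cancel, leaving only data oscillation and the $X$-norm spatial error. For the non-conforming remainder $r\coloneqq w-\pi_h w$, the $H^{-1}$-valued identity $\p_t(u-\Uht)=\nabla\cdot(\sth+\nabla u)+(f-f_{h,\tau})$, obtained by combining the heat equation with the equilibration~\eqref{eq:flux_equilibration}, permits integration by parts against $r\in H^1_0(\oma)$ to give
\[
\int_{I_n}\pair{\p_t(u-\Uht)}{r}\,\dd t = \int_{I_n}\left[\pair{f-f_{h,\tau}}{r} - (\sth+\nabla u,\nabla r)_\oma\right]\dd t.
\]
Decomposing $\sth+\nabla u=(\sth+\nabla\uht)+\nabla(u-\uht)$ and writing $(\sth+\nabla\uht)|_\oma$ as a sum of a bounded number of neighbouring local flux residuals $\bm{\sigma}_h^{\ver',n}+\psi_{\ver'}\nabla\uht$ introduces the flux estimator itself on the right-hand side; this is controlled by $\norm{r}_\oma\lesssim h_\oma\norm{\nabla w}_\oma\leq\gamma^{1/2}\tau_n^{1/2}\norm{\nabla w}_\oma$, which is precisely the scaling matched by the denominator $\bigl(\int_{I_n}\norm{\nabla\widetilde{v}}_\oma^2\,\dd t\bigr)^{1/2}$ in~\eqref{eq:local_stability_main}. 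The $\norm{\nabla(\uht-\Uht)}_\oma$ contribution on the right-hand side of~\eqref{eq:X_norm_local_lower_bound} then emerges from the cross-terms obtained upon recombining $\nabla u=\nabla(u-\Uht)+\nabla(\Uht-\uht)+\nabla\uht$ so as to separate the genuine $X$-norm error from the nonconformity $\uht\not\in Y$.

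Summing the local bounds over $\ver\in\Ver_K$ produces~\eqref{eq:X_norm_local_lower_bound}, and then summing over $K\in\T$ followed by a global kickback argument --- justified by the bounded patch overlap from shape regularity combined with the CFL constant $\gamma$ --- yields the global bound~\eqref{eq:X_norm_global_lower_bound}. The hard part will be the kickback step: the constant multiplying the flux term generated on the right-hand side is bounded but not obviously small at the local level, so the absorption can only be performed globally, which is consistent with the global $X$-norm errors appearing on the right-hand side of~\eqref{eq:X_norm_global_lower_bound}. The dimensional restriction $1\leq\dim\leq 3$ is inherited from the local $H(\mathrm{div})$-stability of the flux equilibration of~\cite{BraessPillweinSchoberl2009,ErnVohralik2020} underlying~\eqref{eq:local_stability_main}.
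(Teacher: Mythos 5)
The paper does not actually prove Theorem~\ref{thm:X_norm_lower_bounds}: it is imported verbatim from~\cite{ESV2019} and the text explicitly refers the reader there, so your sketch can only be judged against the strategy of that reference. Your opening moves are faithful to it: starting from the local stability bound~\eqref{eq:local_stability_main}, the whole game is to handle the temporal pairing $\int_{I_n}\pair{\p_t(u-\Uht)}{w}\dd t$ without producing $\norm{\p_t(u-\Uht)}_{H^{-1}(\oma)}$, and splitting $w=\pi_h w+r$ with Galerkin orthogonality absorbing the discrete part is exactly right. The identity you derive for the $\pi_h w$ piece is correct.

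The genuine gap is in the treatment of the remainder $r=w-\pi_h w$. After writing $\p_t(u-\Uht)=\nabla{\cdot}(\sth+\nabla u)+(f-f_{h,\tau})$ and integrating by parts, the term you must control is $(\sht+\nabla\uht,\nabla r)_\oma$, which involves $\nabla r$, not $r$. Since $\norm{\nabla(w-\pi_h w)}_\oma\lesssim\norm{\nabla w}_\oma$ is sharp and carries no factor of $h_\oma$, your claimed control ``by $\norm{r}_\oma\lesssim h_\oma\norm{\nabla w}_\oma$'' does not apply to this term: the flux estimator reappears on the right-hand side multiplied by an $O(1)$ constant, the hypothesis $h_{\oma}^2\leq\gamma\tau_n$ is never actually invoked, and the self-referential term can be absorbed neither locally (which is what~\eqref{eq:X_norm_local_lower_bound} asserts, for each fixed $K$ and $n$) nor globally, since a kickback requires the constant in front of the reappearing estimator to be strictly less than one, which you have not established. (Integrating by parts element-wise instead merely replaces $\sht+\nabla\uht$ by the classical element and face residuals, which are again equivalent to the flux estimator, so the circularity persists.) The place where the parabolic condition genuinely enters in~\cite{ESV2019} is different: one keeps $\p_t\Uht$ as a \emph{discrete $L^2$ function} and pairs it directly with the interpolation remainder, gaining $\norm{w-\pi_h w}_\oma\lesssim h_\oma\norm{\nabla w}_\oma$ in the $L^2$ pairing (or using $L^2$-orthogonality of the remainder to $\VTp$ outright), and then converts $h_\oma\norm{\p_t\Uht}_\oma$ into the jump quantity $\tau_n^{-1/2}\bigl(\int_{I_n}\norm{\nabla(\uht-\Uht)}_\oma^2\dd t\bigr)^{1/2}$ using $h_\oma^2\leq\gamma\tau_n$ together with inverse/Poincar\'e-type estimates; $\p_t u$ itself is eliminated through the PDE rather than through the equilibration identity. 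By routing everything through $\nabla{\cdot}(\sht+\nabla u)$ you destroy precisely the $L^2$-pairing structure that makes the gain of $h_\oma$ possible, so as written the argument does not close.
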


The principal consequence of~\eqref{eq:X_norm_local_lower_bound} is to show that the additional presence of the flux estimator in the fully discrete setting does not significantly alter the situation in comparison to the semi-discrete setting considered previously in Section~\ref{sec:X-norm-bounds}. 
Note that the condition $h^2\lesssim \tau$ corresponds to the case of practical computational interest since it allows for large time-steps.
We refer the reader to \cite{ESV2019} for the proof. 
Observe that the principal difference between the local lower bounds of \eqref{eq:X_norm_local_lower_bound} and \eqref{eq:EY_local_lower_bound} is that the error in the time-derivative $\p_t(u-\Uht)$ does not appear on the right-hand side of~\eqref{eq:X_norm_local_lower_bound}.

\begin{remark}[Bibliographical remarks]
 A bound of a similar nature as~\eqref{eq:X_norm_local_lower_bound} was previously obtained by Picasso in~\cite{Picasso1998} under the more restrictive two-sided condition $\tau\simeq h$.
 As mentioned already in Remark~\ref{rem:error_measure_X}, one can also consider alternative measures of the error, that combine both $\norm{u-\uht}_X$ and $\norm{u-\Uht}_X$, in which case one recovers equivalence with the estimator, see e.g.~\cite[Corollary~5.4]{ESV2019} for details.
\end{remark}

\subsection{Error bounds for the energy norm error}

Motivated by the analysis in Section~\ref{sec:hypercircle_theorem} for the semi-discrete setting, we now consider \emph{a posteriori} error bounds for the energy norm error defined by
\begin{equation}
\norm{u-\overline{u}_{h,\tau}}^2_E \coloneqq \norm{u-\overline{u}_{h,\tau}}_X^2+\frac{1}{2}\norm{u(T)-\overline{u}_{h,\tau}(T)}_\Omega^2,
\end{equation}
where the natural discrete approximation to consider is
\begin{equation}
\overline{u}_{h,\tau}\coloneqq \frac{1}{2}\left(\uht+\Uht\right).
\end{equation}
We start the analysis by proving a global efficiency bound for the jump estimator $\norm{\uht-\Uht}_X$. In order to proceed, we will make an additional assumption on the spatial discretization, namely that the $L^2$-orthogonal projection operator $\Pi_h\colon H^1_0(\Omega)\rightarrow \VTp$ is $H^1$-stable: there exists a constant $C_{\Pi}$ independent of $h$ such that
\begin{equation}\label{eq:L2_stab_ortho}
\norm{\nabla \Pi_h v}_{\Omega} \leq C_{\Pi}\norm{\nabla v}_\Omega \qquad \forall v\in H^1_0(\Omega).
\end{equation}
It is straightforward to see that \eqref{eq:L2_stab_ortho} implies the following bound: for any $w_h\in \VTp$,
\begin{equation}\label{eq:DiscreteNegnorm_stab}
\norm{w_h}_{H^{-1}(\Omega)}\leq C_{\Pi} \norm{w_h}_{\VTp^*},
\end{equation}
where $\norm{w_h}_{\VTp^*}\coloneqq \sup_{v_h\in\VTp\setminus\{0\}}\frac{(w_h,v_h)_\Omega}{\norm{\nabla v_h}_\Omega}$ is the discrete dual norm.
Note that the $H^1$-stability of the $L^2$-projection is known to hold for conforming finite element methods in a range of cases, including quasi-uniform meshes and also some graded meshes, see for instance~\cite{GaspozSiebert2016}. 
This condition is also related to the quasi-optimality of Galerkin methods for parabolic equations~\cite{TantardiniVeeser2016}.

\begin{theorem}[Efficiency of the jump estimator~\cite{Smears2025}]\label{thm:jump_discrete_energy_stab}
Suppose that~\eqref{eq:L2_stab_ortho} holds. Then,
\begin{equation}\label{eq:jump_discrete_energy_stab}
\norm{\uht-\Uht}_X \lesssim \norm{u-\overline{u}_{h,\tau}}_{E} + \wetaOscEth,
\end{equation}
where the hidden constant depends only on $C_{\Pi}$, and where the data oscillation~$\wetaOscEth$ is defined by
\begin{equation}
\wetaOscEth \coloneqq \sup_{\substack{ \varphi_h \in H^1(0,T;\VTp)\setminus\{0\} \\ \varphi_h(0)=0 }}\frac{\int_0^T\pair{f-f_{h,\tau}}{\varphi_h}\mathrm{d}t}{\normYs{\varphi_h}}.
\end{equation}
\end{theorem}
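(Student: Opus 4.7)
The plan is to exploit the inf--sup identity of Theorem~\ref{thm:Z_infsup}, restricted to discrete test functions in $H^1(0,T;\VTp)$ with vanishing initial value, together with an explicit construction that extracts $\norm{\uht-\Uht}_X^2$ from the bilinear form $\Bz$. Throughout, set $w_h \coloneqq \uht - \Uht \in X$, noting that $w_h(t)\in\VTp$ for a.e.\ $t$.

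The first step is to derive the key residual identity: for any $\varphi_h \in H^1(0,T;\VTp)$ with $\varphi_h(0) = 0$, I would show
\begin{equation}\label{eq:planid}
\Bz(\bm{u} - \overline{\bm u}_{h,\tau}, \varphi_h) = \int_0^T\pair{f - f_{h,\tau}}{\varphi_h}\dd t + \frac{1}{2}\int_0^T\bigl[(\p_t\varphi_h, w_h)_\Omega + (\nabla\varphi_h, \nabla w_h)_\Omega\bigr]\dd t.
\end{equation}
This mirrors the derivation in Theorem~\ref{thm:energy_norm_bound}: expand $\overline u_{h,\tau} = \Uht + \tfrac{1}{2}w_h$ in $\Bz$, integrate by parts in time to transfer the derivative onto $\Uht$ (using $\varphi_h(0)=0$ and $\Uht(T)=\overline u_{h,\tau}(T)$), subtract~\eqref{eq:Z_weakform} tested with $\varphi_h$, and cancel via the discrete scheme~\eqref{eq:IE_FEM_2}, which applies pointwise in time with $v_h = \varphi_h(t)\in\VTp$.

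Next I would construct a test function that maximizes the second term of~\eqref{eq:planid}. Define $\varphi_h \in H^1(0,T;\VTp)$ as the unique solution of the discrete forward parabolic problem with $\varphi_h(0) = 0$ and
\begin{equation*}
(\p_t \varphi_h, v_h)_\Omega + (\nabla\varphi_h, \nabla v_h)_\Omega = (\nabla w_h, \nabla v_h)_\Omega \quad \forall v_h \in \VTp, \text{ a.e.\ } t\in(0,T).
\end{equation*}
Testing this equation pointwise in time with $v_h = w_h(t)\in\VTp$ and integrating immediately yields
\[
\int_0^T\bigl[(\p_t\varphi_h,w_h)_\Omega + (\nabla\varphi_h,\nabla w_h)_\Omega\bigr]\dd t = \norm{w_h}_X^2.
\]
A standard energy estimate (test with $v_h=\varphi_h(t)$) controls $\norm{\varphi_h(T)}_\Omega^2 + \int_0^T\norm{\nabla\varphi_h}_\Omega^2\dd t \leq \norm{w_h}_X^2$. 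From the equation, $\norm{\p_t\varphi_h}_{\VTp^*}\leq \norm{\nabla(w_h-\varphi_h)}_\Omega$, and by~\eqref{eq:DiscreteNegnorm_stab} this yields $\norm{\p_t\varphi_h}_{H^{-1}(\Omega)}\leq C_\Pi\norm{\nabla(w_h-\varphi_h)}_\Omega$. Combining (and using $\varphi_h(0)=0$) gives $\normYs{\varphi_h}\lesssim \norm{w_h}_X$.

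To conclude, Theorem~\ref{thm:Z_infsup} bounds $\Bz(\bm u-\overline{\bm u}_{h,\tau},\varphi_h)\leq \norm{u-\overline u_{h,\tau}}_E\normYs{\varphi_h}$, while $\varphi_h$ is admissible in the supremum defining $\wetaOscEth$, so $\int_0^T\pair{f-f_{h,\tau}}{\varphi_h}\dd t \leq \wetaOscEth\normYs{\varphi_h}$. Rearranging~\eqref{eq:planid}, using the two preceding identities, and dividing by $\norm{w_h}_X$ produces~\eqref{eq:jump_discrete_energy_stab}. The crux of the argument---and the indispensable role of assumption~\eqref{eq:L2_stab_ortho}---lies in bounding the continuous $H^{-1}$-norm of $\p_t\varphi_h$ by its discrete dual norm; without~\eqref{eq:L2_stab_ortho}, no such comparison holds on $\VTp$ in general and the estimate on $\normYs{\varphi_h}$ collapses, which is what makes the jump estimator generally non-efficient for the energy norm in settings where~\eqref{eq:L2_stab_ortho} fails.
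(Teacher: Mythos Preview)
The paper itself does not give a proof of Theorem~\ref{thm:jump_discrete_energy_stab}; it refers the reader to~\cite{Smears2025}. So there is no in-paper argument to compare against, and your proposal must be judged on its own terms.

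Your argument is correct. The residual identity~\eqref{eq:planid} holds as stated: once $\varphi_h(t)\in\VTp$ for all $t$, the discrete equation~\eqref{eq:IE_FEM_2} may be used pointwise in time, and the computation is the fully discrete analogue of~\eqref{eq:energy_norm_bound_4}, noting $w_h(T)=0$ so that $\ouht(T)=\Uht(T)$. The choice of $\varphi_h$ as the solution of the finite-dimensional forward problem $(\p_t\varphi_h,v_h)_\Omega+(\nabla\varphi_h,\nabla v_h)_\Omega=(\nabla w_h,\nabla v_h)_\Omega$ with $\varphi_h(0)=0$ is well-posed (an ODE in $\VTp$ with $L^2$-in-time data), and your estimates $\norm{\varphi_h(T)}_\Omega^2+\norm{\varphi_h}_X^2\leq\norm{w_h}_X^2$ and $\norm{\p_t\varphi_h}_{H^{-1}(\Omega)}\leq C_\Pi\norm{\nabla(w_h-\varphi_h)}_\Omega$ via~\eqref{eq:DiscreteNegnorm_stab} combine to give $\normYs{\varphi_h}\lesssim\norm{w_h}_X$ with constant depending only on $C_\Pi$. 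The conclusion then follows exactly as you describe, using boundedness of $\Bz$ from~\eqref{eq:infsup_upper_bound} and the admissibility of $\varphi_h$ in the supremum defining $\wetaOscEth$ (note $\varphi_h\neq 0$ whenever $w_h\neq 0$, else the defining equation forces $w_h=0$). Your remark on the indispensability of~\eqref{eq:L2_stab_ortho} is also apt: it is precisely the step $\norm{\p_t\varphi_h}_{H^{-1}(\Omega)}\leq C_\Pi\norm{\p_t\varphi_h}_{\VTp^*}$ that fails without it.
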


We refer the reader to~\cite{Smears2025} for the proof.
We now give the main result on the \emph{a posteriori} error bound for the energy norm of the error.

\begin{theorem}\label{thm:energy_main_bound}
Let $\sth$ be given by~\eqref{eq:sht_def}.
Then
\begin{equation}\label{eq:energy_main_upper}
\norm{u-\ouht}_E \leq \left(\int_0^T \left( \norm{\nabla(\ouht-\Uht)}_\Omega^2+\norm{\sth+\nabla\ouht}_{\Omega}^2\right)\mathrm{d}t\right)^{\frac{1}{2}} + \etaOscEth,
\end{equation}
where the data oscillation term $\etaOscEth$ is defined by
\begin{equation}
\etaOscEth \coloneqq \sup_{\varphi\in Y\setminus\{0\}}\frac{\int_0^T\pair{f-f_{h,\tau}}{\varphi} \mathrm{d}t + (u_0-u_{h,\tau,0},\varphi(0))_{\Omega}}{\normYs{\varphi}}.
\end{equation}
If $1\leq \dim \leq 3$, if~\eqref{eq:L2_stab_ortho} holds and if there exists a constant $\gamma>0$ such that $h_{\oma}^2 \leq \gamma \tau_n$ for all $\ver\in\Ver$ and all $n\in\{1,\dots,N\}$, then
\begin{equation}\label{eq:energy_main_lower}
\int_0^T \left( \norm{\nabla(\ouht-\Uht)}_\Omega^2+\norm{\sth+\nabla\ouht}_{\Omega}^2\right)\mathrm{d}t \lesssim \norm{u-\ouht}_E^2 + \left[\etaOscEthGlobal\right]^2,
\end{equation}
where
\begin{equation}
\left[\etaOscEthGlobal\right]^2\coloneqq \left[\wetaOscEth\right]^2
+ \sum_{n=1}^N\sum_{\ver\in\Ver}\left[\eta_{\mathrm{osc}}^{\ver,n}\right]^2.
\end{equation}
The hidden constant in~\eqref{eq:energy_main_lower} depends only on the shape-regularity of $\T$, the dimension $\dim$,  the constant $\gamma$, and on the constant $C_{\Pi}$ appearing in~\eqref{eq:L2_stab_ortho}.
\end{theorem}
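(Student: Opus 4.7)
I would mirror the strategy of the semi-discrete Theorem~\ref{thm:energy_norm_bound}. Set $\bm{u}\coloneqq(u,u(T))$ and $\overline{\bm{u}}_{h,\tau}\coloneqq(\ouht,\ouht(T))$ in $Z$, so that Theorem~\ref{thm:Z_infsup} gives $\norm{u-\ouht}_E=\sup_{\varphi\in Y\setminus\{0\}}\Bz(\bm{u}-\overline{\bm{u}}_{h,\tau},\varphi)/\normYs{\varphi}$, and it suffices to bound the numerator. For fixed $\varphi\in Y$, I would start from the weak form~\eqref{eq:Z_weakform} for $\bm{u}$, split $\ouht=\Uht+(\ouht-\Uht)$ in $\Bz(\overline{\bm{u}}_{h,\tau},\varphi)$, and integrate the term $-\int_0^T\pair{\p_t\varphi}{\Uht}\dd t$ by parts in time. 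Using $\Uht(T)=\ouht(T)$ and $\Uht(0)=u_{h,\tau,0}$, together with the equilibration identity~\eqref{eq:flux_equilibration} to rewrite $\int_0^T(\p_t\Uht,\varphi)_\Omega\dd t=\int_0^T(f_{h,\tau},\varphi)_\Omega\dd t+\int_0^T(\sth,\nabla\varphi)_\Omega\dd t$ (the spatial integration by parts being licit since $\varphi(t)\in H^1_0(\Omega)$ a.e.), and recalling $\ouht-\Uht=\tfrac{1}{2}(\uht-\Uht)$, I expect to arrive at
\begin{equation*}
\Bz(\bm{u}-\overline{\bm{u}}_{h,\tau},\varphi) = \int_0^T\pair{f-f_{h,\tau}}{\varphi}\dd t + (u_0-u_{h,\tau,0},\varphi(0))_\Omega - \int_0^T(\sth+\nabla\ouht,\nabla\varphi)_\Omega\dd t + \int_0^T\pair{\p_t\varphi}{\ouht-\Uht}\dd t.
\end{equation*}
A joint Cauchy--Schwarz applied to the final two integrals, combined with $\norm{\nabla\varphi}_X^2+\int_0^T\norm{\p_t\varphi}_{H^{-1}(\Omega)}^2\dd t\leq\normYs{\varphi}^2$ and the definition of $\etaOscEth$, then yields~\eqref{eq:energy_main_upper}.

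\textbf{Lower bound.} For the lower bound I would combine Theorem~\ref{thm:jump_discrete_energy_stab} with the global flux efficiency bound~\eqref{eq:X_norm_global_lower_bound}. Because $\ouht-\Uht=\tfrac{1}{2}(\uht-\Uht)$, Theorem~\ref{thm:jump_discrete_energy_stab} directly yields
\begin{equation*}
\int_0^T\norm{\nabla(\ouht-\Uht)}_\Omega^2\dd t = \tfrac{1}{4}\norm{\uht-\Uht}_X^2 \lesssim \norm{u-\ouht}_E^2 + \left[\wetaOscEth\right]^2.
\end{equation*}
For the flux contribution, I would split $\sth+\nabla\ouht=(\sth+\nabla\uht)-\tfrac{1}{2}\nabla(\uht-\Uht)$ and apply~\eqref{eq:X_norm_global_lower_bound} to $\norm{\sth+\nabla\uht}_X$. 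The term $\norm{u-\uht}_X$ that appears on its right-hand side is dominated via $\norm{u-\uht}_X\leq\norm{u-\ouht}_X+\tfrac{1}{2}\norm{\uht-\Uht}_X\leq\norm{u-\ouht}_E+\tfrac{1}{2}\norm{\uht-\Uht}_X$, and the remaining jump term $\norm{\uht-\Uht}_X$ is again controlled by Theorem~\ref{thm:jump_discrete_energy_stab}. Assembling these estimates gives~\eqref{eq:energy_main_lower}, with the oscillation term $\etaOscEthGlobal$ collecting both $\wetaOscEth$ (from each use of Theorem~\ref{thm:jump_discrete_energy_stab}) and the local contributions $\eta_{\mathrm{osc}}^{\ver,n}$ appearing in~\eqref{eq:X_norm_global_lower_bound}.

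\textbf{Main obstacle.} The genuinely new ingredient is Theorem~\ref{thm:jump_discrete_energy_stab}, which transfers efficiency of the temporal jump estimator from the (generically failing, c.f.\ Example~\ref{ex:inefficiency_jump_X_norm}) $X$-norm setting to the energy-norm setting for the averaged reconstruction $\ouht$; once it is taken as a black box, the argument is essentially bookkeeping. The delicate checks are that the hypotheses $1\leq\dim\leq 3$, $h_{\oma}^2\leq\gamma\tau_n$, and the $H^1$-stability~\eqref{eq:L2_stab_ortho} are propagated through the constants exactly as needed, and that the two distinct data-oscillation contributions (the $\wetaOscEth$ generated by Theorem~\ref{thm:jump_discrete_energy_stab} and the patchwise $\eta_{\mathrm{osc}}^{\ver,n}$ generated by Theorem~\ref{thm:X_norm_lower_bounds}) indeed collapse into the compact global quantity $\etaOscEthGlobal$ stated in the theorem.
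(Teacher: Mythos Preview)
Your upper bound argument is essentially identical to the paper's: both invoke Theorem~\ref{thm:Z_infsup} to pass to the residual, insert the flux equilibration identity~\eqref{eq:flux_equilibration} after integrating the $\Uht$ term by parts in time, and arrive at exactly the same residual expression before applying Cauchy--Schwarz against $\normYs{\varphi}$. (The observation $\ouht-\Uht=\tfrac{1}{2}(\uht-\Uht)$ is not actually needed here, only later.)

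For the lower bound, the paper does not give a proof but refers to~\cite{Smears2025}. Your proposed route---combining Theorem~\ref{thm:jump_discrete_energy_stab} for the jump term with the global flux efficiency~\eqref{eq:X_norm_global_lower_bound}, and controlling the resulting $\norm{u-\uht}_X$ via the triangle inequality $\norm{u-\uht}_X\leq\norm{u-\ouht}_E+\tfrac{1}{2}\norm{\uht-\Uht}_X$---is correct and uses precisely the hypotheses stated (the $H^1$-stability~\eqref{eq:L2_stab_ortho} for Theorem~\ref{thm:jump_discrete_energy_stab}, and $1\leq\dim\leq 3$ together with $h_{\oma}^2\leq\gamma\tau_n$ for Theorem~\ref{thm:X_norm_lower_bounds}). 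The oscillation bookkeeping also matches the definition of $\etaOscEthGlobal$. So your sketch gives a complete proof of~\eqref{eq:energy_main_lower} from the ingredients already in the paper; whether this coincides with the argument in~\cite{Smears2025} cannot be determined from the present text.
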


\begin{proof}
We show here the proof of the upper bound, and refer the reader to~\cite{Smears2025} for the proof of the lower bound.
Theorem~\ref{thm:Z_infsup} shows that
\begin{equation}
\norm{u-\ouht}_E = \sup_{\varphi\in Y\setminus\{0\}}\frac{\pair{\mathcal{R}_E(\ouht)}{\varphi}_{Y^*\times Y}}{\normYs{\varphi}},
\end{equation}
where the residual $\mathcal{R}_E(\ouht)\in Y^*$ is defined by
\begin{multline}\label{eq:energy_main_1}
\pair{\mathcal{R}_E(\ouht)}{\varphi}_{Y^*\times Y}\coloneqq \int_0^T \pair{f}{\varphi}\mathrm{d}t + (u_0,\varphi(0))_\Omega
\\  - (\ouht(T),\varphi(T))_{\Omega}
-\int_0^T\left[-\pair{\p_t\varphi}{\ouht}+(\nabla \ouht,\nabla\varphi)_\Omega \right] \mathrm{d}t,
\end{multline}
for all $\varphi \in Y$.
Using the flux equilibration identity~\eqref{eq:flux_equilibration}, integration-by-parts, and the identities $\ouht(T)=\Uht(T)$ and $\Uht(0)=u_{h,\tau,0}$, we see that, for any $\varphi \in Y$,
\begin{equation}\label{eq:energy_main_2}
\int_0^T (f_{h,\tau},\varphi)_\Omega\mathrm{d}t = (\ouht(T),\varphi(T))_\Omega - (u_{h,\tau,0},\varphi(0))_{\Omega} + \int_0^T \left[-\pair{\p_t\varphi}{\Uht}-(\sth,\nabla \varphi)_\Omega \right]\mathrm{d}t.
\end{equation}
Therefore, after combining~\eqref{eq:energy_main_1} with \eqref{eq:energy_main_2}, we see that
\begin{multline}
\pair{\mathcal{R}_E(\ouht)}{\varphi}_{Y^*\times Y} = \int_0^T \pair{f-f_{h,\tau}}{\varphi}\mathrm{d}t + (u_0-u_{h,\tau,0},\varphi(0))_{\Omega}
\\ + \int_0^T\left[\pair{\p_t\varphi}{\ouht - \Uht} - (\sth+\nabla \ouht,\nabla \varphi)_{\Omega} \right]\mathrm{d}t.
\end{multline}
The Cauchy--Schwarz inequality then implies that
\begin{multline}
\sup_{\varphi\in Y\setminus\{0\}}\frac{\pair{\mathcal{R}_E(\ouht)}{\varphi}_{Y^*\times Y}}{\normYs{\varphi}} \\ \leq \left(\int_0^T \left( \norm{\nabla(\ouht-\Uht)}_\Omega^2+\norm{\sth+\nabla\ouht}_{\Omega}^2\right)\mathrm{d}t\right)^{\frac{1}{2}}+\etaOscEth,
\end{multline}
thus showing the upper bound~\eqref{eq:energy_main_upper}.
\end{proof}

Theorem~\ref{thm:energy_main_bound} represents the extension of Theorem~\ref{thm:energy_norm_bound} to the fully discrete setting, showing in particular the global efficiency of the jump and flux estimators, under the hypotheses that $h^2\lesssim \tau$ and that $L^2$-orthogonal projection $\Pi_h$ is $H^1$-stable.
Note that the lower bound for the error is global in both space and time, owing to the fact that the analysis of the jump estimator in Theorem~\ref{thm:jump_discrete_energy_stab} in \cite{Smears2025} involves a nonlocal argument in both space and time.

\section*{Conclusion}\label{sec:conclusions}

In the analysis presented above, we have demonstrated that the analytical properties of \emph{a posteriori} error estimators present a number of challenges and subtleties. In particular, the efficiency of the estimators can depend strongly on the choice of norm in which to measure the error, but also on the choice of reconstruction of the numerical solution that is considered.
Understanding the relationships between the possible choices of norms and of notion of numerical solutions is crucial for interpreting the results of the estimators in practical computations.


\begin{thebibliography}{10}

\bibitem{AdamsFournier2003}
{\sc R.~A. Adams and J.~J.~F. Fournier}, {\em Sobolev spaces}, vol.~140 of Pure
  and Applied Mathematics (Amsterdam), Elsevier/Academic Press, Amsterdam,
  second~ed., 2003.

\bibitem{AkrivisMakNoch2006}
{\sc G.~Akrivis, C.~Makridakis, and R.~H. Nochetto}, {\em A posteriori error
  estimates for the {C}rank-{N}icolson method for parabolic equations}, Math.
  Comp., 75 (2006), pp.~511--531,
  \url{https://doi.org/10.1090/S0025-5718-05-01800-4}.

\bibitem{AkrivisMakridakisNochetto2006}
{\sc G.~Akrivis, C.~Makridakis, and R.~H. Nochetto}, {\em A posteriori error
  estimates for the {C}rank-{N}icolson method for parabolic equations}, Math.
  Comp., 75 (2006), pp.~511--531,
  \url{https://doi.org/10.1090/S0025-5718-05-01800-4}.

\bibitem{AkrivisMakridakisNochetto2009}
{\sc G.~Akrivis, C.~Makridakis, and R.~H. Nochetto}, {\em Optimal order a
  posteriori error estimates for a class of {R}unge-{K}utta and {G}alerkin
  methods}, Numer. Math., 114 (2009), pp.~133--160,
  \url{https://doi.org/10.1007/s00211-009-0254-2}.

\bibitem{BergamBernardiMghazli2005}
{\sc A.~Bergam, C.~Bernardi, and Z.~Mghazli}, {\em A posteriori analysis of the
  finite element discretization of some parabolic equations}, Math. Comp., 74
  (2005), pp.~1117--1138, \url{https://doi.org/10.1090/S0025-5718-04-01697-7}.

\bibitem{BoffiBrezziFortin2013}
{\sc D.~Boffi, F.~Brezzi, and M.~Fortin}, {\em Mixed finite element methods and
  applications}, vol.~44 of Springer Series in Computational Mathematics,
  Springer, Heidelberg, 2013, \url{https://doi.org/10.1007/978-3-642-36519-5}.

\bibitem{BraessPillweinSchoberl2009}
{\sc D.~Braess, V.~Pillwein, and J.~Sch\"oberl}, {\em Equilibrated residual
  error estimates are {$p$}-robust}, Comput. Methods Appl. Mech. Engrg., 198
  (2009), pp.~1189--1197, \url{https://doi.org/10.1016/j.cma.2008.12.010}.

\bibitem{BraessSchoberl2008}
{\sc D.~Braess and J.~Sch\"oberl}, {\em Equilibrated residual error estimator
  for edge elements}, Math. Comp., 77 (2008), pp.~651--672,
  \url{https://doi.org/10.1090/S0025-5718-07-02080-7}.

\bibitem{ChenFeng2004}
{\sc Z.~Chen and J.~Feng}, {\em An adaptive finite element algorithm with
  reliable and efficient error control for linear parabolic problems}, Math.
  Comp., 73 (2004), pp.~1167--1193,
  \url{https://doi.org/10.1090/S0025-5718-04-01634-5}.

\bibitem{CostabelMcIntosh2010}
{\sc M.~Costabel and A.~McIntosh}, {\em On {B}ogovski\u i\ and regularized
  {P}oincar\'e{} integral operators for de {R}ham complexes on {L}ipschitz
  domains}, Math. Z., 265 (2010), pp.~297--320,
  \url{https://doi.org/10.1007/s00209-009-0517-8}.

\bibitem{DemlowLakkisMak2009}
{\sc A.~Demlow, O.~Lakkis, and C.~Makridakis}, {\em A posteriori error
  estimates in the maximum norm for parabolic problems}, SIAM J. Numer. Anal.,
  47 (2009), pp.~2157--2176, \url{https://doi.org/10.1137/070708792}.

\bibitem{DestuynderMetivet1999}
{\sc P.~Destuynder and B.~M\'etivet}, {\em Explicit error bounds in a
  conforming finite element method}, Math. Comp., 68 (1999), pp.~1379--1396,
  \url{https://doi.org/10.1090/S0025-5718-99-01093-5}.

\bibitem{Dupont1982}
{\sc T.~Dupont}, {\em Mesh modification for evolution equations}, Math. Comp.,
  39 (1982), pp.~85--107, \url{https://doi.org/10.2307/2007621}.

\bibitem{ErikssonJohnson1987a}
{\sc K.~Eriksson and C.~Johnson}, {\em Error estimates and automatic time step
  control for nonlinear parabolic problems. {I}}, SIAM J. Numer. Anal., 24
  (1987), pp.~12--23, \url{https://doi.org/10.1137/0724002}.

\bibitem{ErikssonJohnson1991}
{\sc K.~Eriksson and C.~Johnson}, {\em Adaptive finite element methods for
  parabolic problems. {I}. {A} linear model problem}, SIAM J. Numer. Anal., 28
  (1991), pp.~43--77, \url{https://doi.org/10.1137/0728003}.

\bibitem{ErikssonJohnson1995}
{\sc K.~Eriksson and C.~Johnson}, {\em Adaptive finite element methods for
  parabolic problems. {II}. {O}ptimal error estimates in {$L_\infty L_2$} and
  {$L_\infty L_\infty$}}, SIAM J. Numer. Anal., 32 (1995), pp.~706--740,
  \url{https://doi.org/10.1137/0732033}.

\bibitem{ErnGuermond2004}
{\sc A.~Ern and J.-L. Guermond}, {\em Theory and practice of finite elements},
  vol.~159 of Applied Mathematical Sciences, Springer-Verlag, New York, 2004,
  \url{https://doi.org/10.1007/978-1-4757-4355-5}.

\bibitem{ESV2017b}
{\sc A.~Ern, I.~Smears, and M.~Vohral\'ik}, {\em Discrete {$p$}-robust {$H({\rm
  div})$}-liftings and a posteriori estimates for elliptic problems with
  {$H^{-1}$} source terms}, Calcolo, 54 (2017), pp.~1009--1025,
  \url{https://doi.org/10.1007/s10092-017-0217-4}.

\bibitem{ESV2017}
{\sc A.~Ern, I.~Smears, and M.~Vohral\'{\i}k}, {\em Guaranteed, locally
  space-time efficient, and polynomial-degree robust a~posteriori error
  estimates for high-order discretizations of parabolic problems}, SIAM J.
  Numer. Anal., 55 (2017), pp.~2811--2834,
  \url{https://doi.org/10.1137/16M1097626}.

\bibitem{ESV2019}
{\sc A.~Ern, I.~Smears, and M.~Vohral\'ik}, {\em Equilibrated flux {\it a
  posteriori} error estimates in {$L^2(H^1)$}-norms for high-order
  discretizations of parabolic problems}, IMA J. Numer. Anal., 39 (2019),
  pp.~1158--1179, \url{https://doi.org/10.1093/imanum/dry035}.

\bibitem{ErnStephansenVohralik2010}
{\sc A.~Ern, A.~F. Stephansen, and M.~Vohral\'ik}, {\em Guaranteed and robust
  discontinuous {G}alerkin a posteriori error estimates for
  convection-diffusion-reaction problems}, J. Comput. Appl. Math., 234 (2010),
  pp.~114--130, \url{https://doi.org/10.1016/j.cam.2009.12.009}.

\bibitem{ErnVohralik2010}
{\sc A.~Ern and M.~Vohral\'ik}, {\em A posteriori error estimation based on
  potential and flux reconstruction for the heat equation}, SIAM J. Numer.
  Anal., 48 (2010), pp.~198--223, \url{https://doi.org/10.1137/090759008}.

\bibitem{ErnVohralik2020}
{\sc A.~Ern and M.~Vohral\'ik}, {\em Stable broken {$H^1$} and {$H({\rm div})$}
  polynomial extensions for polynomial-degree-robust potential and flux
  reconstruction in three space dimensions}, Math. Comp., 89 (2020),
  pp.~551--594, \url{https://doi.org/10.1090/mcom/3482}.

\bibitem{Evans1998}
{\sc L.~C. Evans}, {\em Partial differential equations}, vol.~19 of Graduate
  Studies in Mathematics, American Mathematical Society, Providence, RI, 1998,
  \url{https://doi.org/10.1090/gsm/019}.

\bibitem{GaspozSiebert2016}
{\sc F.~D. Gaspoz, C.-J. Heine, and K.~G. Siebert}, {\em Optimal grading of the
  newest vertex bisection and {$H^1$}-stability of the {$L_2$}-projection}, IMA
  J. Numer. Anal., 36 (2016), pp.~1217--1241,
  \url{https://doi.org/10.1093/imanum/drv044}.

\bibitem{GaspozSiebertKreuzerZiegler2019}
{\sc F.~D. Gaspoz, K.~Siebert, C.~Kreuzer, and D.~A. Ziegler}, {\em A
  convergent time-space adaptive {${\rm dG}(s)$} finite element method for
  parabolic problems motivated by equal error distribution}, IMA J. Numer.
  Anal., 39 (2019), pp.~650--686, \url{https://doi.org/10.1093/imanum/dry005}.

\bibitem{GeorgoulisLakkis2011}
{\sc E.~H. Georgoulis, O.~Lakkis, and J.~M. Virtanen}, {\em A posteriori error
  control for discontinuous {G}alerkin methods for parabolic problems}, SIAM J.
  Numer. Anal., 49 (2011), pp.~427--458,
  \url{https://doi.org/10.1137/080722461}.

\bibitem{JohnsonNieThomee1990}
{\sc C.~Johnson, Y.~Y. Nie, and V.~Thom\'ee}, {\em An a posteriori error
  estimate and adaptive timestep control for a backward {E}uler discretization
  of a parabolic problem}, SIAM J. Numer. Anal., 27 (1990), pp.~277--291,
  \url{https://doi.org/10.1137/0727019}.

\bibitem{Kreuzer2012}
{\sc C.~Kreuzer, C.~A. M\"oller, A.~Schmidt, and K.~G. Siebert}, {\em Design
  and convergence analysis for an adaptive discretization of the heat
  equation}, IMA J. Numer. Anal., 32 (2012), pp.~1375--1403,
  \url{https://doi.org/10.1093/imanum/drr026}.

\bibitem{KreuzerVeeser2021}
{\sc C.~Kreuzer and A.~Veeser}, {\em Oscillation in a posteriori error
  estimation}, Numer. Math., 148 (2021), pp.~43--78,
  \url{https://doi.org/10.1007/s00211-021-01194-8}.

\bibitem{LakkisMakridakis2006}
{\sc O.~Lakkis and C.~Makridakis}, {\em Elliptic reconstruction and a
  posteriori error estimates for fully discrete linear parabolic problems},
  Math. Comp., 75 (2006), pp.~1627--1658,
  \url{https://doi.org/10.1090/S0025-5718-06-01858-8}.

\bibitem{LakkisMakPryer2015}
{\sc O.~Lakkis, C.~Makridakis, and T.~Pryer}, {\em A comparison of duality and
  energy a posteriori estimates for ${L}_\infty(0,{T};{L}_2({\Omega}))$ in
  parabolic problems}, Math. Comp., 84 (2015), pp.~1537--1569,
  \url{https://doi.org/10.1090/S0025-5718-2014-02912-8}.

\bibitem{LionsMagenes1972}
{\sc J.-L. Lions and E.~Magenes}, {\em Non-homogeneous boundary value problems
  and applications. {V}ol. {I}}, vol.~Band 181 of Die Grundlehren der
  mathematischen Wissenschaften, Springer-Verlag, New York-Heidelberg, 1972.
\newblock Translated from the French by P. Kenneth.

\bibitem{LozinskiPicasso2009}
{\sc A.~Lozinski, M.~Picasso, and V.~Prachittham}, {\em An anisotropic error
  estimator for the {C}rank-{N}icolson method: application to a parabolic
  problem}, SIAM J. Sci. Comput., 31 (2009), pp.~2757--2783,
  \url{https://doi.org/10.1137/080715135}.

\bibitem{MakridakisNochetto2006}
{\sc C.~Makridakis and R.~H. Nochetto}, {\em A posteriori error analysis for
  higher order dissipative methods for evolution problems}, Numer. Math., 104
  (2006), pp.~489--514, \url{https://doi.org/10.1007/s00211-006-0013-6}.

\bibitem{NicaiseSoualem2005}
{\sc S.~Nicaise and N.~Soualem}, {\em A posteriori error estimates for a
  nonconforming finite element discretization of the heat equation}, M2AN Math.
  Model. Numer. Anal., 39 (2005), pp.~319--348,
  \url{https://doi.org/10.1051/m2an:2005009}.

\bibitem{NochettoSavareVerdi2000}
{\sc R.~H. Nochetto, G.~Savar\'{e}, and C.~Verdi}, {\em A posteriori error
  estimates for variable time-step discretizations of nonlinear evolution
  equations}, Comm. Pure Appl. Math., 53 (2000), pp.~525--589,
  \url{https://doi.org/10.1002/(SICI)1097-0312(200005)53:5<525::AID-CPA1>3.0.CO;2-M}.

\bibitem{OsborneSmears2025}
{\sc Y.~A.~P. Osborne and I.~Smears}, {\em Finite element approximation of
  time-dependent mean field games with nondifferentiable {H}amiltonians},
  Numer. Math., 157 (2025), pp.~165--211,
  \url{https://doi.org/10.1007/s00211-024-01447-2}.

\bibitem{Picasso1998}
{\sc M.~Picasso}, {\em Adaptive finite elements for a linear parabolic
  problem}, Comput. Methods Appl. Mech. Engrg., 167 (1998), pp.~223--237,
  \url{https://doi.org/10.1016/S0045-7825(98)00121-2}.

\bibitem{PragerSynge1947}
{\sc W.~Prager and J.~L. Synge}, {\em Approximations in elasticity based on the
  concept of function space}, Quart. Appl. Math., 5 (1947), pp.~241--269,
  \url{https://doi.org/10.1090/qam/25902}.

\bibitem{Roubicek2013}
{\sc T.~Roub\'{\i}\v{c}ek}, {\em Nonlinear partial differential equations with
  applications}, vol.~153 of International Series of Numerical Mathematics,
  Birkh\"{a}user/Springer Basel AG, Basel, second~ed., 2013,
  \url{https://doi.org/10.1007/978-3-0348-0513-1}.

\bibitem{Schotzau2010}
{\sc D.~Sch{\"o}tzau and T.~P. Wihler}, {\em A posteriori error estimation for
  {$hp$}-version time-stepping methods for parabolic partial differential
  equations}, Numer. Math., 115 (2010), pp.~475--509,
  \url{https://doi.org/10.1007/s00211-009-0285-8}.

\bibitem{SchwabStevenson2009}
{\sc C.~Schwab and R.~Stevenson}, {\em Space-time adaptive wavelet methods for
  parabolic evolution problems}, Math. Comp., 78 (2009), pp.~1293--1318,
  \url{https://doi.org/10.1090/S0025-5718-08-02205-9}.

\bibitem{Smears2025}
{\sc I.~Smears}, {\em On the efficiency of a posteriori error estimators for
  parabolic partial differential equations in the energy norm}, arXiv preprint 2507.13188
  (2025), \url{https://doi.org/10.48550/arXiv.2507.13188}.

\bibitem{SmearsVohralik2020}
{\sc I.~Smears and M.~Vohral\'ik}, {\em Simple and robust equilibrated flux
  {\it a posteriori} estimates for singularly perturbed reaction-diffusion
  problems}, ESAIM Math. Model. Numer. Anal., 54 (2020), pp.~1951--1973,
  \url{https://doi.org/10.1051/m2an/2020034}.

\bibitem{Sutton2020}
{\sc O.~J. Sutton}, {\em Long-time {$L^\infty(L^2)$} {\it a posteriori} error
  estimates for fully discrete parabolic problems}, IMA J. Numer. Anal., 40
  (2020), pp.~498--529, \url{https://doi.org/10.1093/imanum/dry078}.

\bibitem{TantardiniVeeser2016}
{\sc F.~Tantardini and A.~Veeser}, {\em The {$L^2$}-projection and
  quasi-optimality of {G}alerkin methods for parabolic equations}, SIAM J.
  Numer. Anal., 54 (2016), pp.~317--340,
  \url{https://doi.org/10.1137/140996811}.

\bibitem{UrbanPatera2012}
{\sc K.~Urban and A.~T. Patera}, {\em A new error bound for reduced basis
  approximation of parabolic partial differential equations}, C. R. Math. Acad.
  Sci. Paris, 350 (2012), pp.~203--207,
  \url{https://doi.org/10.1016/j.crma.2012.01.026}.

\bibitem{Verfurth1998b}
{\sc R.~Verf\"urth}, {\em A posteriori error estimates for nonlinear problems.
  ${L}^r(0,{T};{L}^\rho({\Omega}))$-error estimates for finite element
  discretizations of parabolic equations}, Math. Comp., 67 (1998),
  pp.~1335--1360, \url{https://doi.org/10.1090/S0025-5718-98-01011-4}.

\bibitem{Verfurth2003}
{\sc R.~Verf\"urth}, {\em A posteriori error estimates for finite element
  discretizations of the heat equation}, Calcolo, 40 (2003), pp.~195--212,
  \url{https://doi.org/10.1007/s10092-003-0073-2}.

\bibitem{Vohralik2010}
{\sc M.~Vohral\'ik}, {\em Unified primal formulation-based a priori and a
  posteriori error analysis of mixed finite element methods}, Math. Comp., 79
  (2010), pp.~2001--2032,
  \url{https://doi.org/10.1090/S0025-5718-2010-02375-0}.

\bibitem{Wloka1987}
{\sc J.~Wloka}, {\em Partial differential equations}, Cambridge University
  Press, Cambridge, 1987, \url{https://doi.org/10.1017/CBO9781139171755}.
\newblock Translated from the German by C. B. Thomas and M. J. Thomas.

\bibitem{Yosida1995}
{\sc K.~Yosida}, {\em Functional analysis}, Classics in Mathematics,
  Springer-Verlag, Berlin, 1995,
  \url{https://doi.org/10.1007/978-3-642-61859-8}.
\newblock Reprint of the sixth (1980) edition.

\end{thebibliography}

\end{document}